\newtheorem{theorem}{Theorem}
\newtheorem{corollary}[theorem]{Corollary}
\newtheorem{definition}[theorem]{Definition}
\newtheorem{lemma}[theorem]{Lemma}
\newtheorem{proposition}[theorem]{Proposition}
\newenvironment{proof}[1][Proof]{\textbf{#1.} }{\ \rule{0.5em}{0.5em}}
\begin{document}

\title{The transition matroid\ of a 4-regular graph: an introduction}
\author{Lorenzo Traldi\\Lafayette College\\Easton, Pennsylvania 18042}
\date{}
\maketitle

\begin{abstract}
Given a 4-regular graph $F$, we introduce a binary matroid $M_{\tau}(F)$ on
the set of transitions of $F$. Parametrized versions of the Tutte polynomial
of $M_{\tau}(F)$ yield several well-known graph and knot polynomials,
including the Martin polynomial, the homflypt polynomial, the Kauffman
polynomial and the Bollob\'{a}s-Riordan polynomial.

\bigskip

Keywords. circuit partition, four-regular graph, knot, link, local
complementation, matroid, mutation, ribbon graph, transition,\ Tutte polynomial

\bigskip

Mathematics Subject\ Classification. 05C10, 05C31, 57M15, 57M25

\end{abstract}

\section{Introduction}

A graph is determined by two finite sets, one set containing vertices and the
other containing edges. Each edge is incident on one or two vertices; an edge
incident on only one vertex is a loop. We think of an edge as consisting of
two distinct half-edges, each of which is incident on precisely one vertex. In
this paper we are especially interested in 4-regular graphs, i.e., graphs in
which each vertex has precisely four incident half-edges. The special theory
of 4-regular graphs was initiated by Kotzig and although his definitions and
results have been generalized and modified over the years, most of the basic
ideas of the theory appear in his seminal paper \cite{K}.

Matroids were introduced by\ Whitney \cite{Wh}, and there are several standard
texts about them \cite{GM, O, We, W1, W2, W}. In this paper we will only
encounter binary matroids. If $M$ is a $GF(2)$-matrix with columns indexed by
the elements of a set $S$, then the binary matroid represented by $M$ is given
by defining the rank of each subset $A\subseteq S$ to be equal to the
dimension of the $GF(2)$-space spanned by the corresponding columns of $M$.
Matroids can be defined in many other ways. In particular, the minimal
nonempty subsets of $S$ that correspond to linearly dependent sets of columns
of $M$ are the circuits of the matroid represented by $M$. We will not refer
to matroid circuits often, to avoid confusion with the following definition.

A circuit in a graph is a sequence $v_{1}$, $h_{1}$, $h_{1}^{\prime}$, $v_{2}%
$, $h_{2}$, ..., $h_{k}$, $h_{k}^{\prime}=h_{0}^{\prime}$, $v_{k+1}=v_{1}$
such that for each $i\in\{1,...,k\}$, $h_{i}$ and $h_{i}^{\prime}$ are
half-edges of a single edge and $h_{i-1}^{\prime}$ and $h_{i}$ are both
incident on $v_{i}$. The half-edges that appear in a\ circuit must be pairwise
distinct, but vertices may be repeated. Two circuits are considered to be the
same if they differ only by a combination of cyclic permutations
$(1,...,k)\mapsto(i,...,k,1,...,i-1)$ and reversals $(1,...,k)\mapsto
(k,...,1)$. Notice that these definitions seem to be essentially
non-matroidal: circuits may be nested, and distinct circuits may involve
precisely the same vertices and half-edges, in different orders. For instance
if a graph has one vertex $v$ and two edges $e_{1}=\{f,f^{\prime}\}$ and
$e_{2}=\{h,h^{\prime}\}$ (both loops), then it has four different circuits:
$v$, $f$, $f^{\prime}$, $v$; $v$, $f$, $f^{\prime}$, $v$, $h^{\prime}$, $h$,
$v$; $v$, $f$, $f^{\prime}$, $v$, $h$, $h^{\prime}$, $v$ and $v$, $h$,
$h^{\prime}$, $v$. See Fig. \ref{tranmf2}, where these circuits are indicated
from left to right, using the convention that when a circuit traverses a
vertex, the dash style (dashed or undashed) is maintained.%

\begin{figure}
[ptb]
\begin{center}
\includegraphics[
trim=1.724001in 6.957740in 0.802948in 1.607918in,
height=1.8568in,
width=4.5057in
]%
{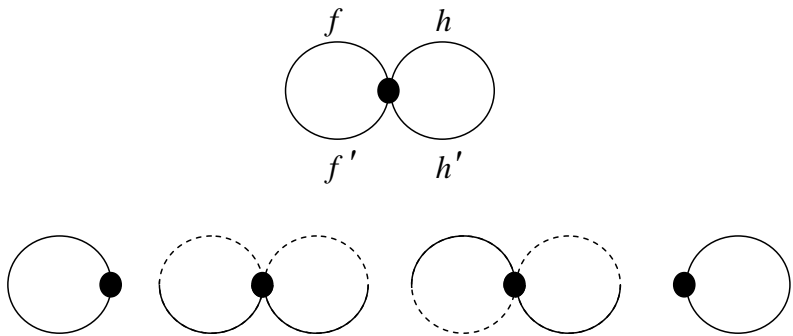}%
\caption{A 4-regular graph with one vertex has four distinct circuits.}%
\label{tranmf2}%
\end{center}
\end{figure}

A circuit $v_{1}$, $h_{1}$, $h_{1}^{\prime}$, $v_{2}$, ..., $h_{k}$,
$h_{k}^{\prime}=h_{0}^{\prime}$, $v_{k+1}=v_{1}$ in a 4-regular graph is
specified by the triples $h_{i-1}^{\prime},v_{i},h_{i}$ where $h_{i-1}%
^{\prime}$ and $h_{i}$ are distinct half-edges incident on $v_{i}$. We call
such a triple a \emph{single transition}. Kotzig called these triples
\textquotedblleft transitions\textquotedblright\ \cite{K}, but we adopt the
convention used by other authors (including Ellis-Monaghan and Sarmiento
\cite{ES}, Jaeger \cite{J} and Las Vergnas \cite{L1, L}) that a
\emph{transition} consists of two disjoint single transitions at the same vertex.

A circuit partition (or Eulerian partition or $\xi$-decomposition) of a
4-regular graph $F$ is a partition of $E(F)$ into edge-disjoint circuits.
These partitions were mentioned by Kotzig \cite{K}, and since then it has
become clear that they are of fundamental significance in the theory of
4-regular graphs. Expanding on earlier work of Martin \cite{Ma}, Las Vergnas
\cite{L} introduced the generating function that records the sizes of circuit
partitions of $F$, and also the generating functions that record the sizes of
directed circuit partitions of directed versions of $F$; he called these
generating functions the Martin polynomials of $F$. A circuit partition of $F$
is determined by choosing one of the three transitions at each vertex, and
Jaeger \cite{J} used this fact in defining his transition polynomial, a form
of the Martin polynomial that incorporates transition labels. A labeled form
of the Martin polynomial was independently discovered by Kauffman, who used it
in his bracket polynomial definition of the Jones polynomial of a knot or link
\cite{Kau, Kv}.

For plane graphs, there is an indirect connection between Martin polynomials
and graphic matroids, introduced by Martin \cite{Ma} and further elucidated by
Las Vergnas \cite{L2} and Jaeger \cite{J}. (The corresponding result for the
Kauffman bracket is due to Thistlethwaite \cite{Th}.) The complementary
regions of a 4-regular graph $F$ imbedded in the plane can be colored
checkerboard fashion, yielding a pair of dual graphs with $F$ as medial; the
cycle matroid of either of the two dual graphs yields the Martin polynomial of
a directed version of $F$. This indirect connection has been extended to
several formulas, each time weakening the connection with matroids: Jaeger
extended it to include information from the undirected Martin polynomial
\cite{J2}, Las Vergnas extended it to medial graphs in the projective plane
and the torus \cite{L1}, and Ellis-Monaghan and Moffatt extended it to include
medials in surfaces of all genera \cite{EMM, EMM1}.

The purpose of the present paper is to introduce a more general connection
between matroids and Martin polynomials, which holds for all 4-regular graphs
and does not require surface geometry. Let $F$ be a 4-regular graph, and let
$\mathfrak{T}(F)$ be the set of transitions of $F$. Then $\mathfrak{T}(F)$ is
partitioned into triples corresponding to the vertices of $F$; we call them
the \emph{vertex triples} of $\mathfrak{T}(F)$. Let $\mathcal{T}(F)$ be the
set of transversals of the vertex triples, i.e., subsets of $\mathfrak{T}(F)$
that contain exactly one transition for each vertex of $F$. Then there is a
bijection%
\[
\tau:\{\text{circuit partitions of }F\}\rightarrow\mathcal{T}(F)
\]
that assigns to a circuit partition $P$ the set of transitions involved in $P$.

\begin{theorem}
\label{tranmat}Let $F$ be a 4-regular graph with $c(F)$ connected components
and $n$ vertices. Then there is a rank-$n$ matroid $M_{\tau}(F)$ defined on
$\mathfrak{T}(F)$ whose rank function extends a well-known formula: For each
circuit partition $P$ of $F$, the rank of $\tau(P)$ in $M_{\tau}(F)$ is given
by
\[
r(\tau(P))=n+c(F)-\left\vert P\right\vert .
\]
Here $\left\vert P\right\vert $ denotes the size of $P$, i.e., the number of
circuits included in $P.$
\end{theorem}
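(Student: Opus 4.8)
The plan is to realize $M_\tau(F)$ as the binary matroid of an explicit $GF(2)$-matrix attached to an Euler system of $F$, and then to verify the rank formula by induction, starting from the partition of $F$ into Euler circuits. Fix an Euler system $C$ of $F$ --- an Euler circuit in each connected component --- and let $A=A(F,C)$ be the interlacement matrix over $GF(2)$: the symmetric $n\times n$ matrix with zero diagonal whose $(u,v)$ entry is $1$ precisely when $u$ and $v$ interlace along $C$. At each vertex $v$ the Euler system singles out the transition that $C$ uses, the \emph{$C$-transition}; of the other two transitions, one (the \emph{$*$-transition}) has the property that switching to it at $v$ alone splits one circuit of the ambient partition into two, and the last is the \emph{$\times$-transition}. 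Let $M_C$ be the $GF(2)$-matrix with columns indexed by $\mathfrak T(F)$ in which the $C$-transition at $v$ receives column $e_v$, the $*$-transition at $v$ receives column $Ae_v$ (the $v$-th column of $A$), and the $\times$-transition at $v$ receives column $e_v+Ae_v$; define $M_\tau(F)$ to be the binary matroid represented by $M_C$. The columns $\{e_v\}_{v\in V(F)}$, which form $\tau(P_0)$ where $P_0$ is the partition of $F$ into Euler circuits, span $GF(2)^n$, so $M_\tau(F)$ has rank $n$ and every transversal has rank at most $n$; since $|P_0|=c(F)$, the asserted identity holds for $P_0$.

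Next I would prove $r(\tau(P))=n+c(F)-|P|$ for every circuit partition $P$ by induction on the number of vertices at which $P$ uses a transition other than the $C$-transition. For the inductive step, choose such a vertex $v$ and let $P_1,P_2,P_3$ be the three circuit partitions that agree with $P$ away from $v$, with $P_1$ using the $C$-transition at $v$; then $P_1$ has one fewer non-$C$ vertex than $P$, so the inductive hypothesis applies to it. Two facts about the triple $P_1,P_2,P_3$ close the induction. First, a Cohn--Lempel-type observation: altering one transition at one vertex splits or merges exactly one circuit, so $\{|P_1|,|P_2|,|P_3|\}$ has the form $\{k,k,k+1\}$. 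Second --- the key lemma --- of the three columns of $M_C$ at $v$, namely $e_v$, $Ae_v$ and $e_v+Ae_v$ (which sum to $0$), \emph{exactly one} lies in the span $V_0$ of the columns that $P$ selects at the vertices other than $v$, and that one is the column of the transition that yields $k+1$ circuits. Granting the key lemma, the $(k+1)$-circuit transition gives a transversal of rank $\dim V_0$ and the other two give rank $\dim V_0+1$; matching this dichotomy with the value the inductive hypothesis assigns to $P_1$ forces $\dim V_0=n+c(F)-k-1$, whence the identity holds for each of $P_1,P_2,P_3$, in particular for $P$.

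The main obstacle is the key lemma, which is precisely where the combinatorics of circuits in $F$ must be reconciled with the linear algebra of $M_C$; note that neither part of it is a formality, since three $GF(2)$-vectors summing to $0$ may have $0$, $1$ or $3$ of them in a given subspace. I would prove it by a local analysis at $v$: the one or two circuits of $P$ through $v$ get reconnected by the three transitions, a circuit pinches off in exactly one of the reconnections, and one must identify that reconnection with the vanishing of the appropriate $GF(2)$-combination of the selected columns. It is convenient to normalize first, using local complementation of $(F,C)$ at $v$ --- the operation that replaces $A$ by its local complement at $v$, leaves $F$ and all its circuit partitions unchanged, and permutes the transition-labels at $v$ and at the neighbours of $v$ in a controlled way --- so as to reduce to the case in which the transition under scrutiny is the $C$-transition; then the column in question is $e_v$, and the assertion follows from the already-established behaviour of the $C$-transition versus the $*$-transition (the genuine Cohn--Lempel formula) together with a rank count. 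Verifying the needed properties of local complementation --- its effect on transitions and on $A$, and the invariance of the represented matroid --- amounts to redeveloping the relevant slice of Bouchet's theory of isotropic systems, and is the step that will require the most care.
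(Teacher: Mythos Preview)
Your matrix $M_C=(I\mid A\mid I{+}A)$ is exactly the paper's (Definition~9), your $C$-, $*$-, $\times$-transitions are the paper's $\phi_C,\chi_C,\psi_C$, and the invariance of the represented matroid under change of Euler system is the content of Proposition~10. Where you diverge is that the paper simply \emph{cites} the circuit--nullity formula as a known result, while you attempt a self-contained inductive proof; the inductive framework is fine, but your sketch of the key lemma has a real gap.

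First, a single local complementation at $v$ swaps only the $\phi$ and $\psi$ labels at $v$; it cannot move $\chi_C(v)$ into the $\phi$-position, so when the $(k{+}1)$-transition happens to be $\chi_C(v)$ you must first locally complement at a neighbour of $v$ in $\mathcal I(C)$ (and handle separately the degenerate case where $v$ is isolated). Second, and more seriously: once you have arranged that the $(k{+}1)$-transition has column $e_v$ in some new matrix $M_{C'}$, the assertion you need is that $e_v$ lies in the span $V_0'$ of $P$'s columns at the other vertices. You invoke ``the genuine Cohn--Lempel formula,'' but that formula gives circuit--nullity only for partitions that avoid $\psi$-transitions, and the local complementations you have performed will in general have turned some of $P$'s transitions at vertices $\neq v$ into $\psi_{C'}$-transitions. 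So you are no longer in the Cohn--Lempel regime, and no purely linear-algebraic ``rank count'' closes the gap without further combinatorial input; indeed, the key lemma is essentially equivalent to the inductive step of circuit--nullity itself.

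A cleaner route, once you have the matroid invariance you already plan to prove, avoids both the induction and the key lemma: given any circuit partition $P$, orient each of its circuits to obtain a balanced orientation of $F$, and take a directed Euler system $C'$ compatible with that orientation. Then $P$ uses only $\phi_{C'}$- and $\chi_{C'}$-transitions, the original Cohn--Lempel formula applies directly in $M_{C'}$, and invariance transfers the identity back to $M_C$.
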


We call $M_{\tau}(F)$ the \emph{transition matroid} of $F$, and we call the
equation displayed in Theorem \ref{tranmat} the \emph{circuit-nullity
formula}. (The name may seem more appropriate after the formula is rewritten
as $\left\vert P\right\vert -c(F)=n-r(\tau(P))$: the number of
\textquotedblleft extra\textquotedblright\ circuits in $P$ equals the nullity
of $\tau(P)$.) The formula has been rediscovered in one form or another many
times over the years \cite{Be, BM, Bu, Br, CL, J1, Jo, KR, Lau, MP, Me, M, R,
So, S, Tbn, Tnew, Z}. What is surprising about Theorem \ref{tranmat} is not
the circuit-nullity formula, but the fact that the formula can be extended to
give well-defined ranks for arbitrary subsets of $\mathfrak{T}(F)$, including
subsets not associated with circuit partitions in $F$ because they contain two
transitions at some vertex, or none.

We define $M_{\tau}(F)$ in Section 2. The definition is not concise enough to
summarize conveniently here, but we might mention that in the terminology of
\cite{Tnewnew}, $M_{\tau}(F)$ is the isotropic matroid of the interlacement
graph of any Euler system of $F$. Notice the word \textquotedblleft
any\textquotedblright\ in the preceding sentence: a typical 4-regular graph
has many different Euler systems, with nonisomorphic interlacement graphs; but
they all give rise to the same transition matroid.

In Section 3 we observe that the transition matroid of $F$ is closely related
to the $\Delta$-matroids and isotropic system associated to $F$ by Bouchet
\cite{Bdm, Bi1, Bi2, Bdr}.

\begin{theorem}
\label{adjiso}These statements about 4-regular graphs $F$ and $F^{\prime}$ are equivalent:

\begin{enumerate}
\item The transition matroid of $F$ is isomorphic to the transition matroid of
$F^{\prime}$.

\item The isotropic system of $F$ is isomorphic to the isotropic system of
$F^{\prime}$.

\item Any $\Delta$-matroid of $F$ is isomorphic to a $\Delta$-matroid of
$F^{\prime}$.

\item All $\Delta$-matroids of $F$ are isomorphic to $\Delta$-matroids of
$F^{\prime}$.

\item Any interlacement graph of $F$ is isomorphic to an interlacement graph
of $F^{\prime}$.

\item All interlacement graphs of $F$ are isomorphic to interlacement graphs
of $F^{\prime}$.
\end{enumerate}
\end{theorem}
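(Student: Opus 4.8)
The plan is to prove the six statements equivalent by showing that each is equivalent to one geometric condition: that $F$ and $F'$ carry the same \emph{local equivalence class} of interlacement graphs. The central tool is the classical dictionary between Euler systems of a $4$-regular graph and their interlacement graphs. If $C$ is an Euler system of $F$, write $\mathcal{I}(F,C)$ for its interlacement graph, a simple graph on the vertex set of $F$. Passing from $C$ to another Euler system by a Kotzig $\kappa$-transformation at a vertex $v$ \cite{K} replaces $\mathcal{I}(F,C)$ by its local complement at $v$, and conversely every Euler system of $F$ is obtained from $C$ by a sequence of such transformations (componentwise, using Kotzig's theorem on connectedness of the set of Euler circuits). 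Since local complementations generate local equivalence, the interlacement graphs of $F$ form exactly one full local equivalence class of graphs, and likewise for $F'$.

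First I would record the ``any/all collapse.'' Because any two interlacement graphs of $F$ are locally equivalent, and the same holds for $F'$, statement (5) forces the two local equivalence classes to coincide up to isomorphism, which is precisely statement (6); the converse is immediate. An identical argument applies to the families of $\Delta$-matroids and of isotropic systems attached to the various Euler systems of $F$: each such family is a single orbit under the twisting operations on $\Delta$-matroids, resp. the triviality of the action on isotropic systems, induced by $\kappa$-transformations in Bouchet's constructions \cite{Bdm, Bi1, Bi2, Bdr}. This gives (3) $\Leftrightarrow$ (4) at once, and shows that the isotropic system of $F$ is well defined independently of the Euler system chosen. So it remains to tie the conditions (1), (2), (3), (5) to the coincidence of the local equivalence classes.

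For this I would invoke two facts. The first is the identification recalled in the excerpt (established in Section 2, see also \cite{Tnewnew}): $M_{\tau}(F)$ is the isotropic matroid of any interlacement graph of $F$, and two graphs have isomorphic isotropic matroids if and only if they are locally equivalent. Hence (1) holds exactly when the local equivalence classes of interlacement graphs of $F$ and $F'$ coincide. The second is Bouchet's theorem that two graphs determine isomorphic isotropic systems if and only if they are locally equivalent \cite{Bdm, Bi1, Bi2, Bdr}; since the isotropic system of $F$ is by definition that of any of its interlacement graphs, (2) says the same thing. Finally, a $\Delta$-matroid of $F$ both determines the isotropic system of $F$ and is determined by it together with the chosen Euler system (Bouchet, as recalled in Section 3), so (3) sits between (2) and (4); assembling these with the collapse argument of the previous paragraph closes the cycle.

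The substantive content lies in assembling these correspondences cleanly rather than in any single computation. The main technical nuisance will be the bookkeeping at vertices incident with a loop or lying on a digon, and at disconnected components of $F$, where ``interlacement graph,'' ``$\Delta$-matroid of $F$,'' and the rank function of Theorem \ref{tranmat} all demand care; I would treat these componentwise, reducing to the connected case. I expect the single delicate point to be the passage from the matroid statement (1) to the combinatorial statements (5) and (6): one must check that the characterization of isotropic-matroid isomorphism by local equivalence operates at the level of unadorned matroids on $\mathfrak{T}(F)$, rather than matroids equipped with the partition into vertex triples, and it is precisely at the loop and digon degeneracies that this needs to be watched most closely.
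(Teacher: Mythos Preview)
Your proposal is correct and follows essentially the same approach as the paper: both arguments use the fact that the interlacement graphs of $F$ constitute a single local-equivalence class (via Kotzig's theorem and the $\kappa$-transformation/local-complementation correspondence), invoke the result of \cite{Tnewnew} that isotropic matroids classify graphs up to local equivalence to handle statement (1), and defer to Bouchet's work \cite{Bi2, Bdr} for the equivalences involving isotropic systems and $\Delta$-matroids. The paper organizes the logic as a cycle $6\Rightarrow 5\Rightarrow 1\Rightarrow 6$ together with separate appeals to Bouchet for $\{2,5,6\}$ and $\{3,4,5,6\}$, whereas you route everything through the single hub ``same local-equivalence class,'' but the substantive content and the cited inputs are identical; your closing worry about whether the isotropic-matroid characterization works for unadorned matroids (without the vertex-triple partition) is exactly the content absorbed into the citation of \cite{Tnewnew}.
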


Notice again the word \textquotedblleft any.\textquotedblright\ A typical
4-regular graph has many distinct interlacement graphs and many distinct
$\Delta$-matroids, but only one transition matroid and only one isotropic system.

Theorem \ref{adjiso} tells us that if an operation on 4-regular graphs
preserves interlacement graphs, then it also preserves transition matroids.
In\ Section 4 we discuss a theorem of Ghier \cite{gh}, a version of which was
subsequently discovered independently by Chmutov and Lando \cite{ChL}. Ghier's
theorem implies that there are two fundamental types of
interlacement-preserving operations, associated with edge cuts of size two or
four. Borrowing some terminology from knot theory, we call operations of the
first type connected sums; their inverses are separations. (See Fig.
\ref{tranmf5a}.) Operations of the second type are balanced mutations. (See
Fig. \ref{tranmf5b1}.) The formal definitions follow.

\begin{definition}
\label{connectedsum}Suppose $F_{1}$ and $F_{2}$ are two separate 4-regular
graphs. Let $e_{1}=\{h_{1},h_{1}^{\prime}\}\in E(F_{1})$ and $e_{2}%
=\{h_{2},h_{2}^{\prime}\}\in E(F_{2})$. Then a \emph{connected sum of }$F_{1}%
$\emph{\ and\ }$F_{2}$\emph{ with respect to }$e_{1}$\emph{\ and }$e_{2}$ is a
graph obtained from $F_{1}\cup F_{2}$ by replacing $e_{1}$ and $e_{2}$ with
new edges $\{h_{1},h_{2}\}$ and $\{h_{1}^{\prime},h_{2}^{\prime}\}$.
\end{definition}

Notice that there are two distinct connected sums of $F_{1}$ and $F_{2}$ with
respect to $e_{1}$ and $e_{2}$, which involve matching different pairs of
half-edges. Moreover there are many different connected sums of $F_{1}$ and
$F_{2}$, with different choices of $e_{1}$ and $e_{2}$. In contrast, the
inverse operation is uniquely defined.%
\begin{figure}
[ptb]
\begin{center}
\includegraphics[
trim=1.739295in 8.544748in 1.079093in 1.539684in,
height=0.7178in,
width=4.2877in
]%
{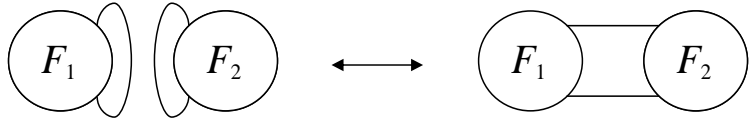}%
\caption{Connected sum and separation.}%
\label{tranmf5a}%
\end{center}
\end{figure}

\begin{definition}
\label{separation}Suppose two edges $e_{1}=\{h_{1},h_{1}^{\prime}\}$ and
$e_{2}=\{h_{2},h_{2}^{\prime}\}$ constitute an edge cut of $F$. Then the
\emph{separation of }$F$ \emph{with respect to }$e_{1}$ and $e_{2}$ is the
graph $F^{\prime}$ with $c(F^{\prime})=c(F)+1$ that is obtained from $F$ by
replacing $e_{1}$ and $e_{2}$ with new edges $\{h_{1},h_{2}\}$ and
$\{h_{1}^{\prime},h_{2}^{\prime}\}$.
\end{definition}

The second type of interlacement-preserving operation is more complicated.

\begin{definition}
\label{mutation}Suppose $F$ has nonempty subgraphs $F_{1}$ and $F_{2}$ such
that $V(F)=V(F_{1})\cup V(F_{2})$, $V(F_{1})\cap V(F_{2})=\varnothing$ and
precisely four edges of $F$ connect $F_{1}$ to $F_{2}$. Group these four edges
into two pairs, and reassemble the four half-edges from each pair into two new
edges, each of which connects $F_{1}$ to $F_{2}$. The resulting graph
$F^{\prime}$ is obtained from $F$ by \emph{balanced mutation}.
\end{definition}

The reader familiar with knot theory should find balanced mutation rather
strange, in at least three ways. 1. Knot theorists use diagrams in the plane,
but we are discussing abstract graphs, not plane graphs. In particular, the
apparent edge-crossings in Fig. \ref{tranmf5b1} are mere artifacts of the
figure; they do not reflect any significant information about the graphs
involved in the operation. 2. Similarly, knot-theoretic mutation is usually
depicted by rotating $F_{2}$ through an angle of $\pi$, rather than
repositioning half-edges. This rotation implicitly requires that new edges and
old edges correspond in pairs. As our discussion is abstract, we mention this
requirement explicitly. 3. In knot theory it is not required that the four
edges shown in the diagram be distinct; two might be parts of an arc that
simply passes through the region of the plane denoted $F_{1}$ or $F_{2}$,
without encountering any crossing.\ This third issue is not significant,
though, because such trivial knot-theoretic mutations can be accomplished with
separations and connected sums.%

\begin{figure}
[ptb]
\begin{center}
\includegraphics[
trim=2.006095in 8.559055in 1.205696in 1.206214in,
height=0.9591in,
width=3.9894in
]%
{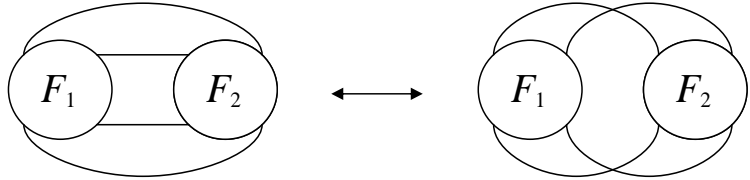}%
\caption{Balanced mutation.}%
\label{tranmf5b1}%
\end{center}
\end{figure}

Ghier's theorem \cite{gh} implies that Definitions \ref{connectedsum},
\ref{separation} and \ref{mutation} characterize the 4-regular graphs with
isomorphic transition matroids.

\begin{theorem}
\label{iso}Let $F$ and $F^{\prime}$ be 4-regular graphs. Then $M_{\tau
}(F)\cong M_{\tau}(F^{\prime})$ if and only if $F$ can be obtained from
$F^{\prime}$ using connected sums, separations and balanced mutations.
\end{theorem}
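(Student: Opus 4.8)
The plan is to reduce Theorem \ref{iso} to Theorem \ref{adjiso} together with the theorem of Ghier \cite{gh} discussed in Section 4, so that essentially the only work left is to match Definitions \ref{connectedsum}--\ref{mutation} with Ghier's operations. Recall that $M_{\tau}(F)$ is the isotropic matroid of the interlacement graph of \emph{any} Euler system of $F$; since isomorphic graphs have isomorphic isotropic matroids, it follows that whenever some Euler system of $F$ and some Euler system of $F'$ have isomorphic interlacement graphs we get $M_{\tau}(F)\cong M_{\tau}(F')$, and by Theorem \ref{adjiso} the converse holds as well. Passing from an Euler system to the associated chord diagram (double occurrence word), an interlacement graph of a connected 4-regular graph is nothing but the circle graph of a chord diagram. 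So $M_{\tau}(F)\cong M_{\tau}(F')$ is equivalent to the statement that $F$ and $F'$ admit chord diagrams with isomorphic circle graphs, and Theorem \ref{iso} becomes exactly Ghier's classification of the 4-regular graphs (equivalently, chord diagrams) with this property, once Ghier's moves have been identified with connected sums, separations and balanced mutations.

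For the ``if'' direction I would check one operation at a time that it preserves the isomorphism type of the transition matroid; by the reduction above it suffices to exhibit, before and after the operation, Euler systems with isomorphic interlacement graphs. For a connected sum of $F_{1}$ and $F_{2}$ along $e_{1}$ and $e_{2}$: take Euler circuits of the components containing $e_{1}$ and $e_{2}$ that traverse those edges and splice them at the two replacement edges; in the resulting Euler circuit of the merged component the vertices of $F_{1}$ fill one arc and the vertices of $F_{2}$ fill the complementary arc, so (keeping the Euler circuits of all other components) the interlacement graph is the disjoint union of the two pieces, which is precisely the interlacement graph of $F_{1}\cup F_{2}$ for the chosen Euler system. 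Separation is the inverse and is handled the same way. For a balanced mutation across a four-edge cut: every Euler circuit of $F$ uses each of the four cut edges exactly once, hence crosses the cut four times and decomposes cyclically into two arcs inside $F_{1}$ and two inside $F_{2}$; if the Euler circuit is chosen so that the two mutated edges bound a common $F_{2}$-arc $Q$ (the degenerate configurations of the cut being disposed of by separations and connected sums, as in the knot-theoretic remark after Definition \ref{mutation}), then the mutation is exactly the reversal of $Q$. Reversing $Q$ alters the interlacement relation only among the vertices that occur once inside $Q$ and once outside, on which it complements the induced subgraph, and one must check that this alteration is realized by a change of Euler system of $F$ itself: here I would invoke Bouchet's description of the interlacement graphs of a fixed 4-regular graph as a single local-equivalence class, or simply appeal to Ghier's theorem, under which a balanced mutation is one of the interlacement-preserving operations by design. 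Composing operations and using transitivity of matroid isomorphism finishes this direction.

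For the ``only if'' direction, suppose $M_{\tau}(F)\cong M_{\tau}(F')$. By Theorem \ref{adjiso} there are Euler systems of $F$ and $F'$ whose interlacement graphs, hence whose chord-diagram circle graphs, are isomorphic. Ghier's theorem then exhibits this isomorphism as a finite sequence of basic moves, each supported on an edge cut of the 4-regular graph of size two or of size four. It remains to recognize the moves: a size-two move reglues the two edges of a 2-edge cut, which is a separation or its inverse connected sum; and a size-four move interchanges two of the four edges of a 4-edge cut while keeping each new edge running between the two sides, which is a balanced mutation. Concatenating the translated moves writes $F$ in terms of $F'$ as claimed.

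The step I expect to be the main obstacle is precisely this dictionary between Ghier's basic moves and Definitions \ref{connectedsum}--\ref{mutation}, together with the associated bookkeeping: one has to be sure that every balanced mutation really is, modulo connected sums and separations, the reversal of an Euler-circuit arc across the cut, and that Ghier's size-four move is exactly as general as balanced mutation and no more. The connected-sum and separation case, and the reductions through Theorems \ref{tranmat} and \ref{adjiso}, should be routine once those theorems are in hand.
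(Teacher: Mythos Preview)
Your overall architecture is exactly the paper's: reduce Theorem~\ref{iso} to Theorem~\ref{adjiso} (so that $M_\tau(F)\cong M_\tau(F')$ becomes ``some Euler systems have isomorphic interlacement graphs''), and then invoke Ghier's theorem to set up a dictionary between Ghier's moves on double occurrence words and the operations of Definitions~\ref{connectedsum}--\ref{mutation}. The connected sum and separation parts of your dictionary are fine and match the paper's Theorem~\ref{ghier2}.

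The gap is in your treatment of balanced mutation. You assert that a balanced mutation amounts to reversing a \emph{single} $F_2$-arc $Q$ of the Euler circuit, and you then observe (correctly) that such a one-arc reversal changes the interlacement graph, leaving you to hope that some further change of Euler system in $F$ will undo the damage. Appealing to Ghier's theorem at that point is circular: Ghier's moves are turnarounds on words, and the whole issue is to show that a balanced mutation on the 4-regular graph really does correspond to a turnaround. The paper resolves this differently. It first reduces all balanced mutations to \emph{orientation-reversing} ones (two of those compose to give any orientation-preserving one; see the discussion around Figs.~\ref{tranmf5b}--\ref{tranmf5c}). For an orientation-reversing mutation, the natural Euler circuit of $F'$ is obtained from that of $F$ by the map $W_1 W_2 X_1 X_2 \mapsto W_1 \overleftarrow{X_2} X_1 \overleftarrow{W_2}$, i.e., \emph{both} $F_2$-arcs are reversed, not one. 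That map is exhibited as a reversal followed by a Ghier turnaround followed by a cyclic permutation, and therefore preserves the interlacement graph outright; no auxiliary change of Euler system is needed. Once you replace your single-arc picture with this two-arc (turnaround) picture, the rest of your argument goes through and coincides with the paper's.
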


In particular, Theorem \ref{iso} tells us that if $F$ is a connected 4-regular
graph that admits neither a separation nor a nontrivial balanced mutation,
then $F$ is determined by its transition matroid:

\begin{corollary}
\label{isom}Let $F$ be a connected 4-regular graph in which every edge cut
with fewer than six edges consists of the edges incident on one vertex. Then a
4-regular graph $F^{\prime}$ is isomorphic to $F$ if and only if their
transition matroids are isomorphic.
\end{corollary}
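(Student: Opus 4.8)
The plan is to derive Corollary \ref{isom} as a direct consequence of Theorem \ref{iso}, so the real work is to argue that under the hypothesis on $F$, the only operations among connected sums, separations, and balanced mutations that can be applied to $F$ are ones that produce a graph isomorphic to $F$. First I would recall that each of the three operations in Theorem \ref{iso} is built around an edge cut: a separation and a connected sum (the inverse of a separation) act on an edge cut of size two, while a balanced mutation acts on an edge cut of size four that genuinely separates the vertex set into two nonempty parts. The hypothesis of the corollary says that in $F$ every edge cut with fewer than six edges is the trivial cut consisting of the four edges incident on a single vertex. Since a single vertex has four incident half-edges, a trivial edge cut has size at most four (exactly four when the vertex carries no loop, fewer if it does), so the hypothesis rules out all edge cuts of size two or three except possibly the four-edge cut around a vertex with loops — and I would note that such a cut still does not split $V(F)$ into two nonempty parts, since one side is empty. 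Hence $F$ admits no separation at all, and admits no nontrivial balanced mutation: the only four-edge cuts available are the trivial vertex cuts, and a balanced mutation on a trivial vertex cut merely re-pairs the four half-edges incident on that vertex, which is exactly the ``trivial balanced mutation'' that the text already observes can be realized by connected sums and separations — and here, since no separations are available, it simply returns a graph isomorphic to $F$.

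With that in hand, the argument runs as follows. The ``only if'' direction is immediate: isomorphic graphs obviously have isomorphic transition matroids, since $M_\tau$ is defined intrinsically from $F$. For the ``if'' direction, suppose $M_\tau(F') \cong M_\tau(F)$. By Theorem \ref{iso}, $F$ can be obtained from $F'$ by a finite sequence of connected sums, separations, and balanced mutations; equivalently, reversing the sequence, $F'$ can be obtained from $F$ by such a sequence (connected sums and separations are mutual inverses, and balanced mutation is its own inverse up to the pairing choice). I would then induct on the length of this sequence from $F$ to $F'$. The base case of length zero is trivial. For the inductive step, consider the first operation applied to $F$. If it is a connected sum, its inverse is a separation, so $F$ must contain an edge cut of size two; but the hypothesis forbids this, so this case does not arise — more carefully, a connected sum of $F$ with another graph creates a graph with a size-two edge cut, and for the final result to be connected and have the same number of vertices we would need... actually the cleanest route is: any connected sum or separation changes the connected-component count or introduces a two-edge cut, and one checks that applying it to $F$ and then continuing cannot terminate at a graph with $F$'s edge-connectivity properties unless the operation was vacuous.

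Let me organize the inductive step more carefully, since this is where the only subtlety lies. The key observation is that the class of connected 4-regular graphs satisfying the corollary's hypothesis is closed under the operations of Theorem \ref{iso} in the following degenerate sense: if $F$ is in the class and $G$ is obtained from $F$ by one connected sum, one separation, or one balanced mutation, then either $G \cong F$ or $G$ is not in the class. For separations and connected sums this is clear because they create or require a two-edge cut that is not a vertex cut (a vertex cut has four edges, when no loop, or fewer edges but then it is incident to a loop, not a pair of the cut edges pairing half-edges across a partition). For balanced mutation, the only available four-edge cut is a trivial vertex cut, and re-pairing the four half-edges at a vertex yields a graph isomorphic to $F$ via the identity on all vertices. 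Since the sequence from $F$ to $F'$ terminates at $F'$, and $F'$ must itself be in the class (it has an isomorphic transition matroid, hence the same matroid-theoretic data, and one can check the hypothesis is detectable from the matroid — or more simply, reverse the roles of $F$ and $F'$), every intermediate graph must be isomorphic to $F$, so $F' \cong F$.

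The step I expect to be the main obstacle is making rigorous the claim that applying a connected sum or separation to $F$ and then continuing cannot return to the class unless the operation was trivial. The cleanest way to handle this is probably to verify that the hypothesis ``every edge cut with fewer than six edges is a vertex cut'' is itself an invariant of the transition matroid — that is, it can be read off from $M_\tau(F)$ — so that $F'$ automatically satisfies it, and then to note that a graph in this class simply has no available separation and no available nontrivial balanced mutation, hence the sequence of Theorem \ref{iso} from $F'$ to $F$ consists entirely of operations that preserve isomorphism type. Alternatively, and perhaps more elementarily, one argues directly that a connected sum of a graph in the class with any nonempty 4-regular graph produces a graph with a genuine two-edge cut separating the two summands, which is not a vertex cut, so leaves the class; and since the operations are reversible, one traces through the sequence to conclude. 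I would present the matroid-invariance route if the relevant facts about $M_\tau$ are available from Section 2, and fall back on the direct combinatorial argument otherwise.
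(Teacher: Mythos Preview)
Your strategy---deduce the corollary from Theorem \ref{iso} by showing that the hypothesis on $F$ leaves no room for nontrivial connected sums, separations, or balanced mutations---is exactly what the paper does (the paper offers no formal proof beyond the sentence immediately preceding the corollary). But your execution has a gap, rooted in a misreading of what ``connected sum'' means here.

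In Theorem \ref{iso} a connected sum is an operation on a single, possibly disconnected, 4-regular graph: it merges two of that graph's connected components by splicing an edge from each. It does not bring in ``another graph'' and does not change the vertex set. Since $F$ is connected, connected sum is simply \emph{not applicable} to $F$. Similarly, the hypothesis says every edge cut of size $<6$ equals the full set of edges incident on some vertex; such a vertex is therefore loopless and the cut has size exactly four, so $F$ has no two-edge cut and separation is not applicable either. Any balanced mutation uses a four-edge cut with both sides nonempty; by hypothesis one side is a single vertex $v$, and re-pairing the four half-edges at $v$ returns the same multigraph. Thus \emph{every} operation applicable to $F$ yields a graph isomorphic to $F$, which again satisfies the hypothesis, and a one-line induction on the length of the (reversed) sequence from $F$ to $F'$ gives $F'\cong F$.

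Your proposed repairs do not close the argument. The dichotomy ``either $G\cong F$ or $G$ leaves the class'' is too weak on its own: nothing prevents the sequence from leaving the class and later re-entering at a non-isomorphic graph. And the hypothesis (which includes connectedness) is \emph{not} a matroid invariant---a separation preserves $M_\tau$ but destroys connectedness---so that route fails as stated. The point you are circling but not landing on is that under the hypothesis no nontrivial move can be made from $F$ at all, so the sequence never leaves the isomorphism class of $F$ in the first place.
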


Theorem \ref{iso} is the analogue for transition matroids of Whitney's famous
characterization of graphs with isomorphic cycle matroids (see Chapter 5 of
\cite{O}): connected sums are analogous to vertex identifications in disjoint
graphs, separations are analogous to cutpoint separations, and balanced
mutations are analogous to Whitney twists. Corollary \ref{isom} is the
analogue of a special case of Whitney's theorem: 3-connected simple graphs are
determined by their cycle matroids.

These analogies may be explained using an idea that appeared implicitly in
Jaeger's work \cite{J1} and was later discussed in detail by Bouchet
\cite{Bi1}. A circuit partition $P$ in a 4-regular graph $F$ has an associated
touch-graph $Tch(P)$, which has a vertex for each circuit of $P$ and an edge
for each vertex of $F$; the edge of $Tch(P)$ corresponding to $v$ is incident
on the vertex or vertices of $Tch(P)$ corresponding to circuit(s) of $P$ that
pass through $v$. In Section 5 we\ observe that connected sums, separations
and balanced mutations of 4-regular graphs induce vertex identifications,
cutpoint separations and Whitney twists in touch-graphs. We also show that
Theorem \ref{tranmat} implies the formula of Las Vergnas and Martin for plane
graphs that was mentioned above.

A parametrized Tutte polynomial of a matroid $M$ on a ground set $S$ is a sum
of the form
\[
f(M)=\sum_{A\subseteq S}\left(  \prod\limits_{a\in A}\alpha(a)\right)  \left(
\prod\limits_{s\in S-A}\beta(s)\right)  (x-1)^{r(S)-r(A)}(y-1)^{\left\vert
A\right\vert -r(A)}%
\]
where $r$ is the rank function of $M$ and $\alpha$, $\beta$ are weight
functions mapping $S$ into some commutative ring. (Other types of parametrized
Tutte polynomials appear in the literature \cite{BR, EMT, Za}, but they will
not enter into our discussion.) Different choices of $\alpha$ and $\beta$
produce parametrized Tutte polynomials with different amounts of information,
but if we are free to choose $\alpha$ and $\beta$ as we like, we can produce a
parametrized Tutte polynomial in which the products of parameters identify the
contributions of individual subsets. Such a polynomial determines the rank of
each subset of $S$, so it determines the matroid $M$. Consequently, an
equivalent form of Theorem \ref{iso} is the following.

\begin{theorem}
Let $F$ and $F^{\prime}$ be 4-regular graphs. Then $F$ can be obtained from
$F^{\prime}$ using connected sums, separations and balanced mutations if and
only if for all weight functions on $\mathfrak{T}(F)$, there are corresponding
weight functions on $\mathfrak{T}(F^{\prime})$ such that the resulting
parametrized Tutte polynomials of $M_{\tau}(F)$ and $M_{\tau}(F^{\prime})$ are
the same.
\end{theorem}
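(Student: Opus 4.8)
The plan is to obtain this statement as a reformulation of Theorem~\ref{iso}. By that theorem it suffices to prove that the stated condition on parametrized Tutte polynomials is equivalent to the assertion that $M_{\tau}(F)$ and $M_{\tau}(F')$ are isomorphic matroids; once this equivalence is in place, the chain ``$F$ is obtained from $F'$ by connected sums, separations and balanced mutations'' $\iff$ ``$M_{\tau}(F)\cong M_{\tau}(F')$'' $\iff$ ``the parametrized Tutte polynomials match as described'' gives exactly what is claimed. Throughout I read ``there are corresponding weight functions'' as: there is a bijection $b\colon\mathfrak{T}(F)\to\mathfrak{T}(F')$ such that, for every pair of weight functions $\alpha,\beta$ on $\mathfrak{T}(F)$ valued in a commutative ring, the transported weights $\alpha'=\alpha\circ b^{-1}$ and $\beta'=\beta\circ b^{-1}$ on $\mathfrak{T}(F')$ yield $f(M_{\tau}(F))=f(M_{\tau}(F'))$.

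First I would dispatch the direction ``$M_{\tau}(F)\cong M_{\tau}(F')$ implies the polynomial condition.'' Fix a matroid isomorphism $\phi\colon\mathfrak{T}(F)\to\mathfrak{T}(F')$ and take $b=\phi$. Then $\phi$ is a bijection of ground sets with $r'(\phi(A))=r(A)$ for every $A\subseteq\mathfrak{T}(F)$, and in particular $r'(\mathfrak{T}(F'))=r(\mathfrak{T}(F))$. For arbitrary $\alpha,\beta$ and their transports $\alpha',\beta'$, the assignment $A\mapsto\phi(A)$ is a bijection between subsets of $\mathfrak{T}(F)$ and subsets of $\mathfrak{T}(F')$ under which $\prod_{a\in A}\alpha(a)=\prod_{a'\in\phi(A)}\alpha'(a')$, $\prod_{s\in\mathfrak{T}(F)-A}\beta(s)=\prod_{s'\in\mathfrak{T}(F')-\phi(A)}\beta'(s')$, $|A|=|\phi(A)|$ and $r(A)=r'(\phi(A))$. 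Hence the summands of $f(M_{\tau}(F))$ and $f(M_{\tau}(F'))$ correspond term for term, so the two polynomials are equal.

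The converse direction, ``the polynomial condition implies $M_{\tau}(F)\cong M_{\tau}(F')$,'' is where the (modest) work lies. Take the target ring to be the polynomial ring $\mathbb{Z}[X_e : e\in\mathfrak{T}(F)]$ and set $\alpha(e)=X_e$ and $\beta(e)=1$; the squarefree monomials $w_A=\prod_{e\in A}X_e$ over subsets $A\subseteq\mathfrak{T}(F)$ are then pairwise distinct, so, viewing $f(M_{\tau}(F))$ as a polynomial in the $X_e$ with coefficients in $\mathbb{Z}[x,y]$, the coefficient of $w_A$ equals $(x-1)^{r(\mathfrak{T}(F))-r(A)}(y-1)^{|A|-r(A)}$. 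Applying the hypothesis with the bijection $b$, the transported weights send the summand of $f(M_{\tau}(F'))$ indexed by $b(A)$ to the same monomial $w_A$, with coefficient $(x-1)^{r'(\mathfrak{T}(F'))-r'(b(A))}(y-1)^{|b(A)|-r'(b(A))}$. Since the polynomials $(x-1)^{i}(y-1)^{j}$ are linearly independent in $\mathbb{Z}[x,y]$, equality of these coefficients forces $r(\mathfrak{T}(F))-r(A)=r'(\mathfrak{T}(F'))-r'(b(A))$ and, using $|A|=|b(A)|$, also $r(A)=r'(b(A))$; combining the two identities gives $r(\mathfrak{T}(F))=r'(\mathfrak{T}(F'))$. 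Thus $b$ is a rank-preserving bijection, i.e.\ a matroid isomorphism $M_{\tau}(F)\cong M_{\tau}(F')$, and Theorem~\ref{iso} then supplies a sequence of connected sums, separations and balanced mutations carrying $F'$ to $F$.

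I expect the only genuine point of care to be the meaning of ``corresponding weight functions.'' If one instead allows the weights on $\mathfrak{T}(F')$ to be an arbitrary assignment into the same ring, not a priori transported along a bijection, then before running the coefficient comparison one must first show that reproducing the above generic parametrized Tutte polynomial of $M_{\tau}(F)$ forces those weights to be essentially a relabeling of the indeterminates $X_e$ — which follows once more from the distinctness of the monomials $w_A$, by inspecting the monomials attached to the one- and two-element subsets to recover the bijection $b$ — after which the argument above goes through unchanged. Everything else is bookkeeping, since the substantive content is already contained in Theorem~\ref{iso} and, through it, in Ghier's theorem.
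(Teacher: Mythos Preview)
Your proposal is correct and follows the same approach as the paper: the paper treats this theorem as an immediate reformulation of Theorem~\ref{iso}, justified by the remark that with freely chosen weights the parametrized Tutte polynomial identifies the contribution of every subset and hence determines the rank function of the matroid. You simply spell out this reduction in detail---constructing the generic indeterminate weights explicitly, extracting ranks from coefficients, and handling both readings of ``corresponding weight functions''---whereas the paper leaves these routine verifications to the reader.
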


Theorem \ref{tranmat} provides a direct connection between Martin polynomials
and parametrized Tutte polynomials of transition matroids.

\begin{theorem}
\label{MTutte}If $F$ is a 4-regular graph then $c(F)$ and the parametrized
Tutte polynomial of $M_{\tau}(F)$ determine the Martin polynomial of $F$ and
the directed Martin polynomials of all balanced orientations of $F$.
\end{theorem}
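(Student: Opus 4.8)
The plan is to derive both kinds of Martin polynomial from the circuit-nullity formula of Theorem~\ref{tranmat}. Recall that, up to a standard normalization involving only $c(F)$ and a change of variable, the Martin polynomial of $F$ is the generating function $\sum_{P}u^{\,|P|}$ in which $P$ runs over all circuit partitions of $F$, while the directed Martin polynomial of a balanced orientation $\vec F$ is the analogous sum $\sum_{\vec P}u^{\,|\vec P|}$ over the directed circuit partitions of $\vec F$; a directed circuit partition is precisely a circuit partition of $F$ that avoids, at every vertex $v$, the unique transition $t^{\ast}_v$ that pairs the two incoming half-edges of $\vec F$ at $v$ with each other. (The labelled ``transition polynomial'' refinements, in which each partition is also weighted by a product of transition labels, are covered by the same argument with only notational changes, so I will suppress them.) Via the bijection $\tau$ of the Introduction, these sums are indexed by transversals: over all of $\mathcal T(F)$ in the undirected case, and over the transversals missing every $t^{\ast}_v$ in the directed case.

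The key observation is that the exponent attached to a circuit partition is controlled by the transition matroid. Since $M_\tau(F)$ has rank $n=r(\mathfrak T(F))$ and $\tau(P)$ is a transversal, we have $|\tau(P)|=n$ as well, so Theorem~\ref{tranmat} gives
\[
|P|-c(F)\;=\;n-r(\tau(P))\;=\;r(\mathfrak T(F))-r(\tau(P))\;=\;|\tau(P)|-r(\tau(P)).
\]
Thus the corank and the nullity of $\tau(P)$ coincide and both equal $|P|-c(F)$, so the term indexed by $A=\tau(P)$ in the parametrized Tutte polynomial of $M_\tau(F)$ carries the factor $(x-1)^{|P|-c(F)}(y-1)^{|P|-c(F)}=\bigl((x-1)(y-1)\bigr)^{|P|-c(F)}$, multiplied by $\bigl(\prod_{t\in\tau(P)}\alpha(t)\bigr)\bigl(\prod_{t\notin\tau(P)}\beta(t)\bigr)$.

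It remains to pick weight functions that retain only the transversal terms. I would work over a polynomial extension of the coefficient ring carrying one square-zero indeterminate $\varepsilon_v$ for each vertex $v$ of $F$, put $\beta\equiv 1$, and, for the undirected polynomial, set $\alpha(t)=\varepsilon_v$ for every transition $t$ at $v$. Then a subset $A\subseteq\mathfrak T(F)$ contributes a monomial divisible by $\prod_v\varepsilon_v$ exactly when $A$ is a transversal: a vertex contributing two or three transitions annihilates $A$'s term because $\varepsilon_v^2=0$, and a vertex contributing none fails to supply the factor $\varepsilon_v$; and when $A=\tau(P)$ is a transversal its contribution is exactly $\prod_v\varepsilon_v$ times $\bigl((x-1)(y-1)\bigr)^{|P|-c(F)}$. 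Hence the coefficient of $\prod_v\varepsilon_v$ in the resulting parametrized Tutte polynomial is $\sum_P\bigl((x-1)(y-1)\bigr)^{|P|-c(F)}$; substituting $u=(x-1)(y-1)$ and multiplying by $u^{c(F)}$ (legitimate, since $c(F)$ is given) recovers $\sum_P u^{|P|}$, the Martin polynomial. For the directed Martin polynomial of $\vec F$ one changes only $\alpha(t^{\ast}_v)=0$ while keeping $\alpha(t)=\varepsilon_v$ on the two permissible transitions at $v$; now the coefficient of $\prod_v\varepsilon_v$ picks out exactly the transversals $\tau(\vec P)$ arising from directed circuit partitions, and the same substitution yields $\sum_{\vec P}u^{|\vec P|}$. (Alternatively, one may invoke the remark in the Introduction that with sufficiently generic weights the parametrized Tutte polynomial of $M_\tau(F)$ already determines the rank of every subset of $\mathfrak T(F)$; then it determines $r(\tau(P))$ for every circuit partition $P$, hence $|P|$ by the circuit-nullity formula, hence both Martin polynomials outright.)

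The genuinely substantive step is the displayed identity of the second paragraph: it is exactly the coincidence of corank and nullity on the subsets $\tau(P)$ — which rests on Theorem~\ref{tranmat} supplying both $r(\mathfrak T(F))=n$ and the value $r(\tau(P))=n+c(F)-|P|$ — that lets the two Tutte-polynomial exponents collapse into a single Martin variable; without it the parametrized Tutte polynomial would have no reason to see $|P|$ at all. The remaining work, namely the bookkeeping that isolates the transversal terms among the $2^{|\mathfrak T(F)|}$ subsets and the matching of normalization conventions, is routine, and I expect the only mild subtlety to be confirming, in the directed case, that the orientation of $\vec F$ really does single out one transition per vertex as described, which is immediate from the in-degree/out-degree conditions.
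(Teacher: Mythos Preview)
Your proof is correct and follows essentially the same route as the paper: both arguments use Theorem~\ref{tranmat} to see that on a transversal $\tau(P)$ the corank and nullity coincide and equal $|P|-c(F)$, and then choose parameters so that only transversal terms survive. The only cosmetic difference is the device used to isolate transversals---the paper quotients by the ideal generated by the $\alpha(\tau_i)\alpha(\tau_j)$ and $\beta(\tau_1)\beta(\tau_2)\beta(\tau_3)$ at each vertex, whereas you use square-zero weights $\varepsilon_v$ and extract the coefficient of $\prod_v\varepsilon_v$; these are interchangeable implementations of the same idea.
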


Theorem \ref{MTutte} is surprising because it has been assumed that for
non-planar 4-regular graphs, Martin polynomials are not directly related to
Tutte polynomials of matroids. (As noted above, the circuit theory of
4-regular graphs seems to be fundamentally non-matroidal, because distinct
circuits can be built from the same vertices and half-edges.) The theorem also
implies that several other graph polynomials associated with circuit
partitions in 4-regular graphs can be recovered directly from the Tutte
polynomial of $M_{\tau}(F)$.

\begin{theorem}
\label{JTutte}If $F$ is a 4-regular graph then $c(F)$ and the parametrized
Tutte polynomial of $M_{\tau}(F)$ determine the interlace polynomials of all
circle graphs associated with $F$ \cite{AH, A1, A2, A, C, T6}, the transition
polynomial of $F$ \cite{E, J}, and the homflypt \cite{FYHLMO, PT} and Kauffman
\cite{KI} polynomials of all knots and links with diagrams whose underlying
4-regular graphs are isomorphic to $F$.
\end{theorem}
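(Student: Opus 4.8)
The plan is to reduce the theorem to Jaeger's transition polynomial, to show that this polynomial is recoverable from the stated data, and then to quote the known expressions of the remaining invariants as specialisations of it. For the first step I would use two facts from the material above: the circuit-nullity formula of Theorem \ref{tranmat}, and the observation recorded just before the statement that a parametrized Tutte polynomial built from sufficiently generic weight functions on the ground set $\mathfrak{T}(F)$ determines the matroid $M_{\tau}(F)$, i.e. the rank $r(T)$ of every subset $T\subseteq\mathfrak{T}(F)$. Fixing such generic weights and using that $r(\mathfrak{T}(F))=n$, the monomial contributed by a transversal $T=\tau(P)$ carries the factor $(x-1)^{\,n-r(T)}(y-1)^{\,n-r(T)}$ (a transversal has exactly $n$ elements), so its exponent reveals $r(T)$; the circuit-nullity formula together with the prescribed value of $c(F)$ then converts $r(T)$ into the number $|P|$ of circuits of the circuit partition $P=\tau^{-1}(T)$. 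Hence the hypotheses of the theorem determine the full list of triples consisting of a circuit partition $P$, the transversal $\tau(P)$ of transitions it uses, and the integer $|P|$.

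From that list Jaeger's transition polynomial of $F$ is immediate: its defining sum is indexed by the circuit partitions $P$, the term for $P$ being the product over $V(F)$ of a label attached to the transition of $P$ at each vertex, times a power of a variable recording $|P|$, and both the transition set $\tau(P)$ and the exponent of every term are now known. (In particular this gives another route to Theorem \ref{MTutte}: equate all labels to recover the Martin polynomial of $F$, and restrict the sum to the transitions consistent with a given balanced orientation to recover the corresponding directed Martin polynomial.) For the remaining invariants I would cite the standard reductions to the transition polynomial. Every circle graph associated with $F$ is the interlacement graph of an Euler system of $F$, and by the references cited in the statement its one- or two-variable interlace polynomial is an evaluation of the transition polynomial of $F$ (equivalently, in view of the identification of $M_{\tau}(F)$ with the isotropic matroid of that interlacement graph noted after Theorem \ref{tranmat}, an evaluation of a Tutte polynomial of $M_{\tau}(F)$); either way it is determined. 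For a knot or link diagram whose underlying 4-regular graph is isomorphic to $F$, the over/under and sign data at the crossings designate, at each vertex, which of the three transitions is which, and the known formulas of Jaeger express the homflypt and Kauffman polynomials of the diagram as evaluations of the transition polynomial of $F$ under the labelling dictated by those crossing data; substituting into the transition polynomial already obtained completes the proof.

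The hard part is not a new idea but the collation: one must check that a single ``master'' parametrized Tutte polynomial, coming from one generic choice of weights, genuinely specialises to each of the assorted polynomials in \cite{E, J, AH, A1, A2, A, C, T6, FYHLMO, PT, KI} with their differing normalisations and variable names, and---before any such specialisation---that the contributions to the parametrized Tutte polynomial from the non-transversal subsets of $\mathfrak{T}(F)$ (those using two transitions, or none, at some vertex) can be stripped off. Genericity of the weights makes the stripping routine, since then distinct subsets contribute monomials that are distinct in the weight parameters; but reconciling conventions across the cited sources is a lengthy bookkeeping exercise, and that is where the bulk of the proof's length will be spent.
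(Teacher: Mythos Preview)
Your proposal is correct and follows essentially the same route as the paper: isolate the transversal contributions to the parametrized Tutte polynomial, use the circuit-nullity formula together with $c(F)$ to recover $|P|$ for each circuit partition, and then quote the known circuit-partition (transition-polynomial) models for the listed invariants. The only cosmetic difference is that where you invoke genericity of the weights to strip off non-transversal terms, the paper (in its proof of Theorem~\ref{MTutte}, which it then says yields Theorem~\ref{JTutte} directly) works in a quotient ring $R/J$ that sets the offending monomials to zero; the effect is the same.
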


Another kind of structure connected with 4-regular graphs is a ribbon graph.
These objects may be defined in several different ways, and in Section 7 we
focus on one particular definition, according to which a ribbon graph is
constructed from a 4-regular graph given with two circuit partitions that do
not involve the same transition at any vertex. Using this definition it is not
hard to deduce the following.

\begin{theorem}
\label{BR}If $F$ is a 4-regular graph then $c(F)$ and the parametrized Tutte
polynomial of $M_{\tau}(F)$ determine all weighted Bollob\'{a}s-Riordan and
topological Tutte polynomials of ribbon graphs whose medial graphs are
isomorphic to $F$.
\end{theorem}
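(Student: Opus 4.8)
The plan is to realize each of the ribbon-graph polynomials in question as a state sum indexed by the subsets $A\subseteq E(G)$, and then to observe that the only genuinely topological contribution of each state---the number of boundary components of the corresponding spanning ribbon subgraph---is exactly what the circuit--nullity formula of Theorem \ref{tranmat} extracts from $M_{\tau}(F)$.

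First I would recall the construction of Section 7 and set up a dictionary. A ribbon graph $G$ with $G_{m}\cong F$ amounts to a choice of two circuit partitions $P_{1},P_{2}$ of $F$ that use no common transition at any vertex; under the construction, $E(G)$ is identified with $V(F)$, the vertices of $G$ with the circuits of $P_{1}$, and the boundary components of $G$ with the circuits of $P_{2}$. In particular the underlying graph of $G$ is the touch-graph $Tch(P_{1})$, so for every $A\subseteq E(G)$ the quantities $\left\vert A\right\vert$, the number of components $k(G_{A})$, the rank $r(A)=\left\vert V(G)\right\vert -k(G_{A})$ and the nullity $n(A)=\left\vert A\right\vert -r(A)$ of the spanning subgraph $G_{A}$, together with any edge weights, depend on $P_{1}$ alone; also $k(G)=c(F)$ since $F$ has no isolated vertices. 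The crucial correspondence is this: to $A\subseteq E(G)=V(F)$ assign the transversal $\tau(P_{A})\in\mathcal{T}(F)$ that uses the $P_{2}$-transition at each $v\in A$ and the $P_{1}$-transition at each $v\notin A$. Tracing the boundary of $G_{A}$ in the standard way shows that the circuit partition $P_{A}$ has exactly $bc(G_{A})$ circuits, where $bc(G_{A})$ is the number of boundary components of $G_{A}$; the cases $A=\varnothing$ and $A=E(G)$ just recover $P_{1}$ and $P_{2}$.

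Next I would bring in the circuit--nullity formula. Applying Theorem \ref{tranmat} to $P_{A}$ gives
\[
bc(G_{A})=\left\vert P_{A}\right\vert =n+c(F)-r\bigl(\tau(P_{A})\bigr),
\]
where $n=\left\vert V(F)\right\vert =\left\vert E(G)\right\vert$ and the rank is computed in $M_{\tau}(F)$. Using weight functions on $\mathfrak{T}(F)$ that assign mutually distinguishable monomials to the elements of $\mathfrak{T}(F)$---so that the parametrized Tutte polynomial of $M_{\tau}(F)$ determines $r(B)$ for every $B\subseteq\mathfrak{T}(F)$, and in particular for each $B=\tau(P_{A})$, with the weights recording membership of each $v\in V(F)$ in $A$ through the $P_{1}/P_{2}$ labelling of its transitions---I would conclude that $c(F)$ together with the parametrized Tutte polynomial of $M_{\tau}(F)$ determines $bc(G_{A})$ for every $A\subseteq E(G)$. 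This is the one step in which the transition matroid does real work, and it is also why the parametrized rather than the ordinary Tutte polynomial is needed: we must isolate the rank of each individual transversal $\tau(P_{A})$, not merely a sum of such ranks.

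Finally I would assemble the polynomial and note the obstacle. Each weighted Bollob\'{a}s--Riordan polynomial and topological Tutte polynomial in the theorem is a sum over $A\subseteq E(G)$ of a monomial in the edge weights of $A$ and in the integers $\left\vert A\right\vert$, $k(G_{A})$, $r(A)$, $n(A)$, $bc(G_{A})$ and the genus-type exponent $2k(G_{A})-\left\vert V(G)\right\vert +\left\vert A\right\vert -bc(G_{A})$ (the exact combination varying with the normalization); so once $G$---equivalently the pair $P_{1},P_{2}$---is given, every ingredient except the numbers $bc(G_{A})$ is already visible in $G$, and those are supplied by the previous paragraph. Summing over $A$ yields the claim. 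I expect the genuine work to lie entirely in the first paragraph: verifying the spanning-subgraph/transversal dictionary and the boundary-count identity $\left\vert P_{A}\right\vert =bc(G_{A})$, and keeping straight which data are intrinsic to $F$ and $M_{\tau}(F)$ (the boundary-component counts, hence the genus exponents) and which are carried by the ribbon graph itself (the touch-graph $Tch(P_{1})$, hence component counts, ranks, nullities, orientability and edge weights)---since in general many non-isomorphic ribbon graphs, for instance all partial duals of one another, share the medial $F$, and the matroid alone cannot see them apart. Once the dictionary is fixed, Theorem \ref{tranmat} finishes the argument exactly as it does for Theorems \ref{MTutte} and \ref{JTutte}.
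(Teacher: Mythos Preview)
Your approach is essentially the same as the paper's: both arguments rest on the identity $bc(G_{A})=\left\vert P_{A}\right\vert$ together with the circuit--nullity formula, so that the only genuinely topological ingredient in each subset-summand is a rank in $M_{\tau}(F)$. The paper packages this slightly differently---it phrases the passage from $G$ to $G[X]$ as a detachment of $F(G)$ along $\delta$ transitions and then invokes Proposition~\ref{tranrib} on the detached medial, noting that the resulting transition matroid is a minor of $M_{\tau}(F(G))$---but your direct use of the partition $P_{A}$ in $F$ itself is the same computation without the intermediate minor language.

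One point to tighten: you list orientability among the data ``carried by the ribbon graph itself (the touch-graph $Tch(P_{1})$ \ldots)''. That is not right as stated---orientability of $G_{A}$ is not visible in $Tch(P_{1})$---and for the signed Bollob\'{a}s--Riordan polynomial one needs the orientability of each spanning ribbon subgraph, not just of $G$. The paper handles this with Proposition~\ref{tranrib}, part~4: $S(G)$ is orientable if and only if $\left\vert P_{A}\right\vert \neq \left\vert P_{A-\{a\}}\right\vert$ for all $A$ and $a$, and the analogous criterion applied after detaching along the $\delta$ transitions outside $A$ gives the orientability of $G_{A}$. Since all of these partition sizes are already produced by your circuit--nullity step, this is easy to fold into your argument; you should just say so explicitly rather than attributing orientability to $Tch(P_{1})$.
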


We hope that Theorems \ref{MTutte} -- \ref{BR} will provide both a unifying
theme for the many polynomials mentioned and useful applications of the theory
of para\-metrized Tutte polynomials. The representation of ribbon graphs using
pairs of circuit partitions also yields an embedding-free expression of the
theory of twisted duality due to Chmutov \cite{Ch}, Ellis-Monaghan and Moffatt
\cite{EMM}: two ribbon graphs are twisted duals if and only if they correspond
to pairs of circuit partitions in the same 4-regular graph.

In Section 9 we\ relate these ideas to a characterization of planar 4-regular
graphs that appears in discussions of the famous Gauss crossing problem (see
for instance \cite{DFOM, GR, RR}): a 4-regular graph is planar if and only if
it has an Euler system whose interlacement graph is bipartite.

We are grateful to Joanna Ellis-Monaghan for her encouragement and her
illuminating explanations regarding embedded graphs and their associated
polynomials, and to Sergei Chmutov and Iain Moffatt for their comments on
earlier versions of the paper. The final version of the paper also benefited
from comments and corrections offered by two anonymous readers.

\section{Defining the transition matroid}

Recall that an Euler circuit of a 4-regular graph $F$ is a circuit that
includes every half-edge of $F$. If $F$ is disconnected it cannot have an
Euler circuit, of course, but every 4-regular graph $F$ has an Euler system,
i.e., a set containing one Euler circuit for each connected component of $F$.
\textit{Kotzig's theorem} \cite{K} states that given one Euler system of $F$,
all the others are obtained by applying sequences of $\kappa$-transformations:
given an Euler system $C$ and a vertex $v$, the $\kappa$-transform $C\ast v$
is the Euler system obtained from $C$ by reversing one of the two $v$-to-$v$
walks within the circuit of $C$ incident on~$v$. See Fig. \ref{tranmf1}.

Let $C$ be an Euler system of a 4-regular graph $F$. The interlacement graph
$\mathcal{I}(C)$ is a simple graph with the same vertices as $F$. Two vertices
$v\neq w\in V(F)$ are adjacent in $\mathcal{I}(C)$ if and only if they appear
in the order $v...w...v...w$ on one of the circuits of $C$; if this happens
then $v$ and $w$ are interlaced with respect to $C$. Interlacement graphs were
discussed by Bouchet \cite{Bold} and Read and Rosenstiehl \cite{RR}, who
observed that the relationship between $\mathcal{I}(C)$ and $\mathcal{I}(C\ast
v)$ is described by simple local complementation at $v$: if $v\neq x\neq y\neq
v$ and $x,y$ are both neighbors of $v$ in $\mathcal{I}(C)$ then they are
adjacent in $\mathcal{I}(C\ast v)$ if and only if they are not adjacent in
$\mathcal{I}(C)$.%

\begin{figure}
[tb]
\begin{center}
\includegraphics[
trim=2.676492in 8.953055in 1.873544in 1.315169in,
height=0.5846in,
width=2.9879in
]%
{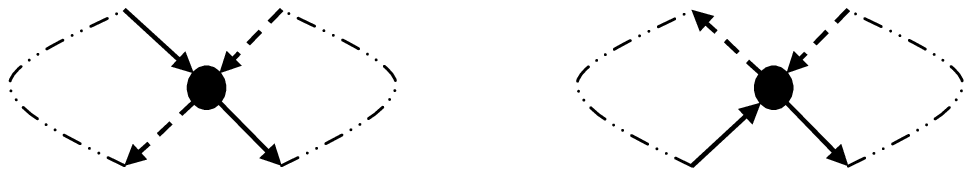}%
\caption{$C$ and $C\ast v$.}%
\label{tranmf1}%
\end{center}
\end{figure}

Recall that a transition at a vertex $v$ is a partition of the four half-edges
incident at a vertex $v$ into two pairs, and $\mathfrak{T}(F)$ denotes the set
of all transitions in $F$. For each Euler system $C$ of $F$ and each $v\in
V(F)$ we may label the transitions at $v$ according to their relationships
with $C$, following a notational scheme from \cite{Tbn, T5, T6, Tnew}.

\begin{definition}
The transition that is used by the circuit of $C$ incident at $v$ is labeled
$\phi_{C}(v)$, the other transition that is consistent with an orientation of
this circuit is labeled $\chi_{C}(v)$, and the transition that is not
consistent with an orientation of the incident circuit of $C$ is labeled
$\psi_{C}(v)$.
\end{definition}

It is easy to see that transition labels with respect to $C$ are not changed
when different orientations of the circuits of $C$ are used. However, a given
transition may have different labels with respect to different Euler systems.
The differences among these labels are not arbitrary:

\begin{proposition}
\label{labelinv}If $C$ is an Euler system of $F$ then the labels of elements
of $\mathfrak{T}(F)$ with respect to $C$ determine the labels of elements of
$\mathfrak{T}(F)$ with respect to all other Euler systems of $F$.
\end{proposition}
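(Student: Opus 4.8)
The plan is to combine Kotzig's theorem with an explicit description of how transition labels change under a single $\kappa$-transformation. A preliminary remark already gives the statement in a weak form: the labels with respect to $C$ identify, at each vertex $w$, the transition $\phi_C(w)$ used by the circuit of $C$ through $w$, and (as recalled in the introduction) a circuit partition of $F$ is determined by choosing one of the three transitions at each vertex; the circuit partition obtained from the transitions $\phi_C(w)$ is $C$ itself. Hence the labels with respect to $C$ determine $C$, and therefore also the interlacement graph $\mathcal{I}(C)$ and --- via Kotzig's theorem --- every other Euler system of $F$ with its labels. What I would actually carry out is to make this dependence explicit, describing how the labels with respect to $C\ast v$ are read off from the labels with respect to $C$ and from $\mathcal{I}(C)$ by a purely local rule, and then iterating the rule along a Kotzig sequence.

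First I would note the elementary fact that, of the two transitions at a vertex $w$ other than $\phi_C(w)$, replacing $\phi_C(w)$ by one of them disconnects the circuit of $C$ through $w$ into two circuits while replacing it by the other does not; the disconnecting one is $\chi_C(w)$ and the other is $\psi_C(w)$. Recall next that $C\ast v$ comes from reversing one of the two $v$-to-$v$ subwalks, say $W_1$, of the circuit $D$ of $C$ incident on $v$. The rule to be established is then: (i) at $v$, the reversal yields the unique transition other than $\phi_C(v)$ that keeps the component connected, which is $\psi_C(v)$, so $\phi_{C\ast v}(v)=\psi_C(v)$, $\psi_{C\ast v}(v)=\phi_C(v)$ and $\chi_{C\ast v}(v)=\chi_C(v)$; (ii) at every $w\neq v$ the new circuit pairs the half-edges at $w$ exactly as $D$ does, since reversing $W_1$ only reverses the passes it contains and changes pairings only at the pivot $v$, so $\phi_{C\ast v}(w)=\phi_C(w)$; (iii) reversing $W_1$ interchanges the disconnecting and the non-disconnecting transition at $w$ precisely when $w$ has one pass in $W_1$ and one in $W_2$, i.e. precisely when $w$ is interlaced with $v$ with respect to $C$, so $\chi_{C\ast v}(w)=\psi_C(w)$ and $\psi_{C\ast v}(w)=\chi_C(w)$ for the neighbors $w$ of $v$ in $\mathcal{I}(C)$, and all three labels at $w$ are unchanged otherwise. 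Since this rule refers only to the labels with respect to $C$ and to $\mathcal{I}(C)$, both of which are determined by the labels with respect to $C$, induction on the length of a Kotzig sequence completes the argument.

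I expect the real obstacle to be the honest verification of (iii), together with the related assertion in (ii)-(iii) that nothing changes at a vertex of $D$ not interlaced with $v$. The difficulty is that a $\kappa$-transformation replaces $D$ by a different circuit, so ``disconnecting at $w$'' has to be compared across two distinct ambient circuits. I would handle this by working with the circuit partition obtained from $D$ by switching transitions at a prescribed set of vertices --- which depends only on that set, not on any ordering --- and by using the local picture at $v$: the two passes of $D$ at $v$ are joined by two segments of $D$, the two passes of $w$ lie in a single one of these segments when $w$ is not interlaced with $v$ and in both of them when it is, and switching the transition at $w$ therefore preserves the connect/disconnect behavior at $v$ in the first case and reverses it in the second. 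Applying this dichotomy symmetrically in $v$ and $w$ is what yields the $\chi\leftrightarrow\psi$ interchange at interlaced neighbors. Before writing the general verification I would check (i)-(iii) on the smallest example exhibiting interlacement --- two vertices joined by four parallel edges --- to avoid getting the transposition backwards.
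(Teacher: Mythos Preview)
Your proposal is correct and follows essentially the same approach as the paper: state the explicit label-change rule under a single $\kappa$-transformation (swap $\phi\leftrightarrow\psi$ at $v$, swap $\chi\leftrightarrow\psi$ at neighbors of $v$ in $\mathcal{I}(C)$, fix everything else) and then invoke Kotzig's theorem. The paper's proof simply asserts the rule with reference to a figure, whereas you supply the underlying verification via the connect/disconnect dichotomy and the interlacement criterion; your preliminary observation that the $\phi$-labels already determine $C$ (and hence $\mathcal{I}(C)$) is a useful point the paper leaves implicit.
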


\begin{proof}
As illustrated in Fig. \ref{tranmf14}, a $\kappa$-transformation changes
transition labels in the following way: $\psi_{C\ast v}(v)=\phi_{C}(v)$ and
$\phi_{C\ast v}(v)=\psi_{C}(v)$; if $w$ is a neighbor of $v$ in $\mathcal{I}%
(C)$ then $\psi_{C\ast v}(w)=\chi_{C}(w)$ and $\chi_{C\ast v}(w)=\psi_{C}(w)$;
and all other transitions of $F$ have the same $\phi,\chi,\psi$ labels with
respect to $C$ and $C\ast v$. The proposition now follows from Kotzig's
theorem, as every other Euler system of $F$ can be obtained from $C$ through
$\kappa$-transformations.
\end{proof}

%

\begin{figure}
[tb]
\begin{center}
\includegraphics[
trim=1.198899in 5.487392in 1.209945in 1.206214in,
height=3.2621in,
width=4.5965in
]%
{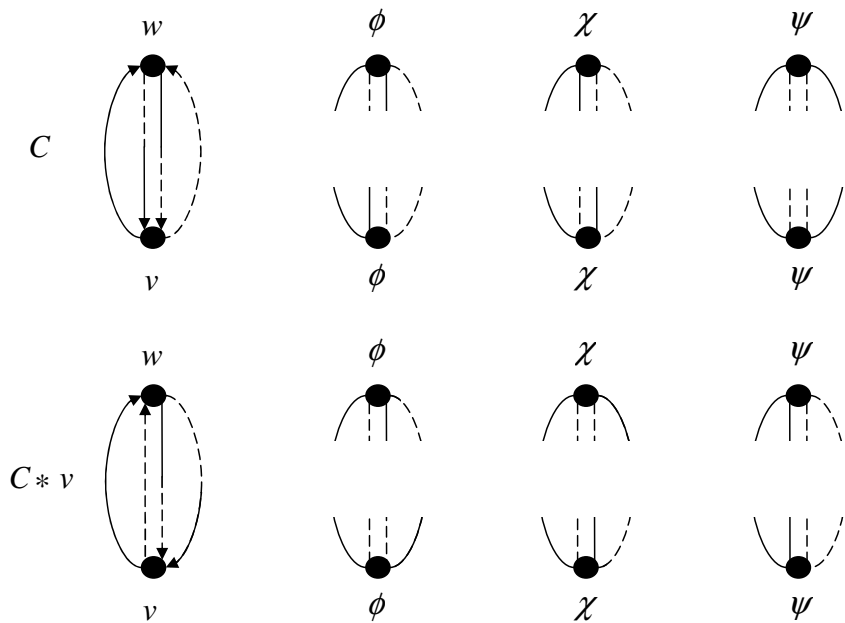}%
\caption{Transition labels with respect to $C$ and $C\ast v$.}%
\label{tranmf14}%
\end{center}
\end{figure}

Here is a simple analogy: a vector space has many bases, and each particular
basis gives rise to a coordinatization of the vector space. The
coordinatizations corresponding to different bases are not independent of each
other; once we know one coordinatization, all the others are determined
through multiplication by nonsingular matrices. This simple analogy is more
apt than it may appear at first glance, by the way; see \cite{T5, Tnew} for details.

\begin{definition}
\label{matrix}Let $F$ be a 4-regular graph with an Euler system $C$, let $I$
denote the identity matrix, and let $\mathcal{A}(\mathcal{I}(C))$ denote the
adjacency matrix of $\mathcal{I}(C)$. Then%
\[
M(C)=\left(  I\mid\mathcal{A}(\mathcal{I}(C))\mid I+\mathcal{A}(\mathcal{I}%
(C))\right)  \text{.}%
\]
The rows of this matrix are indexed using vertices of $F$ as in $\mathcal{A}%
(\mathcal{I}(C))$, and the columns are indexed using transitions of $F$ as
follows: the column of $I$ corresponding to $v$ is indexed by $\phi_{C}(v)$,
the column of $\mathcal{A}(\mathcal{I}(C))$ corresponding to $v$ is indexed by
$\chi_{C}(v)$, and the column of $I+\mathcal{A}(\mathcal{I}(C))$ corresponding
to $v$ is indexed by $\psi_{C}(v)$.
\end{definition}

Different Euler systems of $F$ will certainly give rise to different matrices,
but these matrices are tied together in a special way.

\begin{proposition}
\label{inv}If $C$ and $C^{\prime}$ are Euler systems of $F$ then $M(C)$ and
$M(C^{\prime})$ represent the same matroid on $\mathfrak{T}(F)$.
\end{proposition}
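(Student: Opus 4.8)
The plan is to reduce to a single $\kappa$-transformation and then exhibit an explicit row operation. First recall that a $GF(2)$-matrix with columns indexed by a set $S$ represents a matroid on $S$ whose rank function records the dimensions of the spans of the corresponding sets of columns; hence if two such matrices have the same column index set and one is obtained from the other by left multiplication by an invertible matrix (columns being matched according to their indices), they represent the same matroid, since left multiplication by a nonsingular matrix preserves the dimension of the span of every set of columns. By Kotzig's theorem every Euler system $C^{\prime}$ of $F$ is obtained from $C$ by a finite sequence of $\kappa$-transformations, so by induction it suffices to treat the case $C^{\prime}=C\ast v$ for a single vertex $v$. That is, I want an invertible $n\times n$ matrix $P$ over $GF(2)$ such that $P\,M(C)$ equals $M(C\ast v)$, once the columns of both matrices are listed in the same order of the transitions that index them, as in Definition \ref{matrix}.

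To find $P$, write $e_u$ for the standard (row-index) basis vector of $u\in V(F)$ and $a_u$ for the column of $\mathcal{A}(\mathcal{I}(C))$ indexed by $u$. Local complementation at $v$ leaves the neighbourhood of $v$ itself unchanged, and by Proposition \ref{labelinv} (and the rules in its proof) the only transitions whose labels change under $C\mapsto C\ast v$ are those at $v$ and at the neighbours of $v$ in $\mathcal{I}(C)$. Inspecting the columns of the $I$-block: for $u\neq v$ the transition $\phi_{C}(u)=\phi_{C\ast v}(u)$ is indexed by $e_u$ in both matrices, while $\phi_{C}(v)=\psi_{C\ast v}(v)$ is indexed by $e_v$ in $M(C)$ but by $e_v+a_v$ in $M(C\ast v)$. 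This forces $P=I+a_v e_v^{\,T}$, the transvection adding $a_v$ into coordinate $v$; it is invertible because $e_v^{\,T}a_v=0$ (the graph $\mathcal{I}(C)$ being simple), so $\det P=1$.

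It then remains to check that this $P$ carries every column of $M(C)$ to the column of $M(C\ast v)$ bearing the same transition label. For this I would use the explicit effect of simple local complementation at $v$ on the adjacency matrix: a neighbour $w$ of $v$ acquires neighbourhood column $a_w+a_v+e_w$, while the neighbourhood column of any vertex outside $N_{\mathcal{I}(C)}(v)\cup\{v\}$ is unchanged. Combining this with the label swaps $\phi_{C}(v)\leftrightarrow\psi_{C\ast v}(v)$ and $\chi_{C}(w)\leftrightarrow\psi_{C\ast v}(w)$ for $w$ a neighbour of $v$, a short verification over the nine column types — $\phi$, $\chi$, $\psi$ at $v$, at a neighbour of $v$, and at a non-neighbour of $v$ — confirms $P\,M(C)=M(C\ast v)$. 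With the reduction above, this proves the proposition.

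The routine but error-prone part is the bookkeeping between a column's position in the matrix and the transition label attached to it: a $\kappa$-transformation permutes which transitions receive which labels, so the identity $P\,M(C)=M(C\ast v)$ must be read with the columns re-sorted by transition rather than by position. Once that convention is pinned down the case check is mechanical, and the only genuinely conceptual ingredient is the opening observation that equality of represented matroids needs only an invertible row transformation, which holds precisely because the matroid is determined by the ranks of sets of columns.
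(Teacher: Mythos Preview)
Your proof is correct and is essentially the same as the paper's: your transvection $P=I+a_v e_v^{\,T}$ is exactly the row operation ``add the $v$ row to each row corresponding to a neighbour of $v$'' that the paper performs, and your insistence on indexing columns by transitions (rather than by $(\text{label},\text{vertex})$ position) is precisely what the paper accomplishes by following the row operation with column interchanges matching the label swaps of Proposition~\ref{labelinv}. The only difference is presentation: you write down $P$ explicitly and leave the nine-case column check as a routine verification, while the paper phrases everything in terms of elementary row and column operations.
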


\begin{proof}
Clearly the matroid $M_{\tau}(F)$ is not affected if elementary row operations
are applied to $M(C)$; also, permuting the columns of $M(C)$ does not affect
the matroid so long as the same permutation is applied to the column indices.

According to Kotzig's theorem, it suffices to prove the proposition when
$C^{\prime}=C\ast v$. Each of $M(C)$, $M(C\ast v)$ consists of three square
submatrices, which we refer to as their $\phi$, $\chi$ and $\psi$\emph{
parts}. The columns of each part are indexed by $V(F)$. We can obtain $M(C\ast
v)$ from $M(C)$ using row and column operations as follows. First, add the $v$
row of $M(C)$ to every other row corresponding to a neighbor of $v$. Second,
interchange the $v$ columns of the $\phi$ and $\psi$ parts of the resulting
matrix. Third, for each neighbor $w$ of $v$, interchange the $w$ columns of
the $\chi$ and $\psi$ parts of the resulting matrix.

Notice that the column interchanges of the preceding paragraph correspond
precisely to the label changes induced by the $\kappa$-transformation
$C\mapsto C\ast v$, as discussed in the proof of Proposition \ref{labelinv}:
$\phi$ and $\psi$ are interchanged at $v$, and $\chi$ and $\psi$ are
interchanged at each neighbor $w$ of $v$. Consequently, $M(C)$ and $M(C\ast
v)$ represent the same matroid on $\mathfrak{T}(F)$.
\end{proof}

Proposition \ref{inv} allows us to make the following definition:

\begin{definition}
\label{matroid}If $F$ is a 4-regular graph with an Euler system $C$, then the
\emph{transition matroid} $M_{\tau}(F)$ is the binary matroid on
$\mathfrak{T}(F)$ represented by the matrix $M(C)$.
\end{definition}

It is not hard to verify that $M_{\tau}(F)$ satisfies the requirements of
Theorem \ref{tranmat}. That is, $M_{\tau}(F)$ is a rank-$n$ matroid defined on
$\mathfrak{T}(F)$ such that for each circuit partition $P$ of $F$, the rank of
$\tau(P)$ in $M_{\tau}(F)$ is given by $r(\tau(P))=n+c(F)-\left\vert
P\right\vert $. For if $C$ is any Euler system of $F$ then the $I$ submatrix
of $M(C)$ guarantees that the rank of $M(C)$ is $n$, no matter what
$\mathcal{A}(\mathcal{I}(C))$ is. Also, if $P$ is a circuit partition of $F$
then the rank of $\tau(P)$ in $M_{\tau}(F)$ is the $GF(2)$-rank of the
submatrix of $M(C)$ consisting of the columns corresponding to elements of
$\tau(P)$. As mentioned in the introduction, the fact that this rank equals
$n-\left\vert P\right\vert +c(F)$ has been discussed by many researchers. (The
reader who has not seen the circuit-nullity formula before may find the
account in \cite{Tnew} convenient, as the notation is close to ours.) This
completes the proof of Theorem \ref{tranmat}.

We briefly discuss some basic properties of transition matroids. Notice first
that the three columns of $M(C)$ corresponding to a vertex $v$ sum to $0$, and
the $\phi_{C}(v)$ and $\psi_{C}(v)$ columns are never 0. We conclude that the
vertex triple $\{\phi_{C}(v)$, $\chi_{C}(v)$, $\psi_{C}(v)\}$ contains either
a 3-cycle of $M_{\tau}(F)$, or a loop and a pair of non-loop parallel
elements. Classes of the latter sort correspond to isolated vertices in
interlacement graphs of $F$, i.e., cut vertices and looped vertices of $F$.

As the rank of $M_{\tau}(F)$ is $n$, the circuit-nullity formula
$r(\tau(P))=n-\left\vert P\right\vert +c(F)$ indicates that a circuit
partition $P$ is an Euler system of $F$ if and only if $\tau(P)$ is a basis of
$M_{\tau}(F)$. Here are two important features of the relationship between
Euler systems and matroid bases.

1. A fundamental property of matroids is this: the bases of a matroid are all
interconnected by basis exchanges (single-element swaps between bases). This
property holds in $M_{\tau}(F)$, of course, but it does not mean that Euler
systems are interconnected by single-vertex transition changes. In general
$M_{\tau}(F)$ will have many bases that do not correspond to Euler systems
because they do not include precisely one transition for each vertex. For
comparison we should regard $\kappa$-transformations as specially modified
basis exchanges, and we should regard Kotzig's theorem as asserting that those
bases of $M_{\tau}(F)$ that correspond to Euler systems are distributed
densely enough that they form a connected set under these special exchanges.

2. A basis of a matroid is a subset of the matroid's ground set, so it is a
simple matter to compare two bases $B$ and $B^{\prime}$; when they disagree
about an element it must be that the element is included in one of
$B,B^{\prime}$ and excluded from the other. For $M_{\tau}(F)$ this simple
analysis is correct if we focus our attention on $\mathfrak{T}(F)$, but the
fact that there are three different transitions at each vertex complicates
matters if we focus our attention on $V(F)$ instead of $\mathfrak{T}(F)$.
There are five different ways that two Euler systems $C,C^{\prime}$ might
disagree about a vertex $v$, corresponding to the five nontrivial permutations
of the set $\{\phi,\chi,\psi\}$.%

\begin{figure}
[tb]
\begin{center}
\includegraphics[
trim=2.808192in 8.048395in 2.424137in 1.885259in,
height=0.8302in,
width=2.4768in
]%
{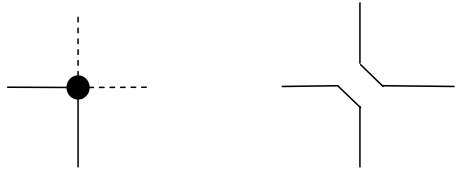}%
\caption{Detachment along a transition.}%
\label{tranmf4}%
\end{center}
\end{figure}

A transition matroid is a $3n$-element matroid of rank $n$, so deleting or
contracting a single element cannot yield another transition matroid. However,
if the triple corresponding to a vertex $v$ is removed by contracting one
transition at $v$ and deleting the other two, then the resulting matroid is
isomorphic to the transition matroid of the 4-regular graph $F^{\prime}$
obtained from $F$ by detachment (or splitting)\ along the contracted
transition. (As indicated in Fig. \ref{tranmf4}, the detachment is constructed
by first removing $v$, and then connecting the loose half-edges paired by the
contracted transition. If $v$ is looped in $F$, this construction may produce
one or two arcs with no incident vertex; any such arc is simply discarded.)
This property is easy to verify if we first choose an Euler system $C$ in
which the contracted transition is $\phi_{C}(v)$. Then the only nonzero entry
of the column of $M(C)$ corresponding to the contracted transition is a $1$ in
the $v$ row, so the definition of matroid contraction tells us that the
matroid $(M_{\tau}(F)/\phi_{C}(v))-\chi_{C}(v)-\psi_{C}(v)$ is represented by
the submatrix of $M(C)$ obtained by removing the row corresponding to $v$ and
removing all three columns corresponding to $v$. The resulting matrix is
clearly the same as $M(C^{\prime})$, where $C^{\prime}$ is the Euler system of
$F^{\prime}$ obtained in the natural way from $C$.

Many binary matroids occur as submatroids of transition matroids of 4-regular
graphs. Jaeger \cite{J3} proved that every cographic matroid is represented by
a symmetric matrix whose off-diagonal entries equal those of the interlacement
matrix of some 4-regular graph; consequently, the submatroids of transition
matroids include all cographic matroids. (See the end of Section 7 for more
discussion of this point.) In addition, many non-cographic matroids occur as
submatroids of transition matroids. For example, consider the 4-regular graph
$F$ of Fig. \ref{tranmf12}. Every pair of vertices is interlaced with respect
to the indicated Euler circuit, so $M_{\tau}(F)$ is represented by the matrix
\[%
\begin{pmatrix}
1 & 0 & 0 & 0 & 1 & 1 & 1 & 1 & 1\\
0 & 1 & 0 & 1 & 0 & 1 & 1 & 1 & 1\\
0 & 0 & 1 & 1 & 1 & 0 & 1 & 1 & 1
\end{pmatrix}
.
\]
The first seven columns represent a well-known non-cographic submatroid of
$M_{\tau}(F)$, the Fano matroid.%

\begin{figure}
[ptb]
\begin{center}
\includegraphics[
trim=3.476891in 8.698827in 2.685839in 1.206214in,
height=0.8527in,
width=1.7798in
]%
{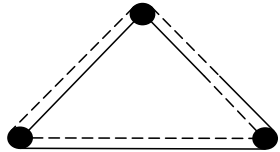}%
\caption{The indicated Euler circuit has $\mathcal{I}(C)\cong K_{3}$.}%
\label{tranmf12}%
\end{center}
\end{figure}

Before proceeding, we should mention that it is likely that some form of the
transition matroid has been noticed before. See \cite{B1}, where Bouchet
comments that \textquotedblleft the main applications -- for instance, to
eulerian graphs -- involve multimatroids that can be sheltered by
matroids.\textquotedblright\ (In Bouchet's terminology, the rank function of
the multimatroid associated with a 4-regular graph $F$ is the restriction of
the rank function of $M_{\tau}(F)$ to the subtransversals of the vertex
triples of $\mathfrak{T}(F)$; he defined this restriction using the
circuit-nullity formula. $M_{\tau}(F)$ shelters\ the multimatroid because its
rank function extends the multimatroid's rank function to arbitrary subsets of
$\mathfrak{T}(F)$.) As far as we know, though, Bouchet's published work
includes neither any explicit description of his sheltering\ matroid nor any
results about it.

The reason we mention \textquotedblleft some form\ of the transition
matroid\textquotedblright\ is that there are at least three other ways to
define $M_{\tau}(F)$.

1. The rank function of $M_{\tau}(F)$ may be described in two stages: First,
use the circuit-nullity formula to determine $r(\tau(P))$ as
$n+c(F)-\left\vert P\right\vert $\ for each circuit partition $P$.\ Second,
describe how the ranks of subsets of the form $\tau(P)$ determine the ranks of
all other subsets of $\mathfrak{T}(F)$. Bouchet detailed the first stage in
\cite{B1}, but did not discuss the second stage. It is possible to complete
this description without explicitly mentioning the matrix $M(C)$; the crucial
fact is that each vertex triple has rank one or two. We leave the details as
an exercise for the interested reader.

2. Similarly, the bases of the matroid $M_{\tau}(F)$ may be described in two
stages: First, observe that $\tau(C)$ is a basis for each Euler system $C$.
Second, describe the rest of the bases. Bouchet noted the connection between
Euler systems and submatrices of $\mathcal{A}(\mathcal{I}(C))$ in \cite{Bdr},
but did not discuss the other bases of $M_{\tau}(F)$.

3. The line graph $L(F)$ is a 6-regular graph that has a vertex for each edge
of $F$ and an edge for each pair of distinct half-edges incident at the same
vertex. (With this convention, a loop in $F$ gives rise to a loop in $L(F)$.)
That is, there is an edge of $L(F)$ for each single transition of $F$. Like
any graph, $L(F)$ has a cycle matroid $M(L(F))$ consisting of vectors in the
$GF(2)$-vector space $GF(2)^{V(L(F))}$; the vector corresponding to a non-loop
edge of $L(F)$ has nonzero coordinates corresponding to the two vertices
incident on that edge, and the vector corresponding to a loop is $0$. If $Z$
is the subspace of $GF(2)^{V(L(F))}$ spanned by $\{z_{1}-z_{2}\mid$ $z_{1}$
and $z_{2}$ are the vectors corresponding to two single transitions that
constitute a transition of $F\}$ then $M_{\tau}(F)$ is the matroid on
$\mathfrak{T}(F)$ in which each transition is represented by the image of
either of its constituent single transitions in the quotient space
$GF(2)^{V(L(F))}/Z$.

\section{Proof of Theorem \ref{adjiso}}

In this section we provide a brief outline of arguments verifying equivalences
among the conditions of Theorem \ref{adjiso}. The substance of the arguments
appears in the references.

Recall that Theorem \ref{adjiso} asserts equivalences among the following
statements about 4-regular graphs $F$ and $F^{\prime}$:

\begin{enumerate}
\item The transition matroid of $F$ is isomorphic to the transition matroid of
$F^{\prime}$.

\item The isotropic system of $F$ is isomorphic to the isotropic system of
$F^{\prime}$.

\item Any $\Delta$-matroid of $F$ is isomorphic to a $\Delta$-matroid of
$F^{\prime}$.

\item All $\Delta$-matroids of $F$ are isomorphic to $\Delta$-matroids of
$F^{\prime}$.

\item Any interlacement graph of $F$ is isomorphic to an interlacement graph
of $F^{\prime}$.

\item All interlacement graphs of $F$ are isomorphic to interlacement graphs
of $F^{\prime}$.
\end{enumerate}

The implication 6 $\Rightarrow$ 5 is obvious, and the implication 5
$\Rightarrow$ 1 follows immediately from Definition \ref{matroid}. The
implication 1 $\Rightarrow$ 6 follows from two results. The first of these
results is a theorem of the author \cite{Tnewnew}, which tells us that if $G$
and $G^{\prime}$ are simple graphs with adjacency matrices $A$ and $A^{\prime
}$, then the matrices%
\[
\left(  I\mid A\mid I+A\right)  \text{ and }\left(  I\mid A^{\prime}\mid
I+A^{\prime}\right)
\]
represent isomorphic binary matroids if and only if $G$ and $G^{\prime}$ are
equivalent under simple local complementation. The second of these results is
the fundamental relationship between $\kappa$-transformations and local
complementation, which tells us that any sequence of simple local
complementations on interlacement graphs is induced by a corresponding
sequence of $\kappa$-transformations on Euler systems. This implies that if
$F$ and $F^{\prime}$ are 4-regular graphs with Euler systems $C$ and
$C^{\prime}$, then $\mathcal{I}(C)$ and $\mathcal{I}(C^{\prime})$ are
equivalent under simple local complementation if and only if $F^{\prime}$ has
an Euler system $C^{\prime\prime}$ such that $\mathcal{I}(C)$ and
$\mathcal{I}(C^{\prime\prime})$ are isomorphic.

The equivalence of statements 2, 5 and 6 of Theorem \ref{adjiso} is readily
extracted from Bouchet's discussion of graphic isotropic systems in Section 6
of \cite{Bi2}, although he does not give quite the same statements. Similarly,
the equivalence of statements 3, 4, 5 and 6 may be extracted from Section 5 of
\cite{Bdr}. We do not repeat the details of these discussions, but we might
summarize them by saying that as the isotropic system of $F$ is determined by
the sizes of the circuit partitions of $F$, and the $\Delta$-matroids
associated with $F$ are determined by the Euler systems of $F$, these
structures are tied to the transition matroid $M_{\tau}(F)$ by the
circuit-nullity formula. A more thorough account of the connections tying
isotropic matroids to $\Delta$-matroids and isotropic systems is given in
\cite{Tnewnew}.

In the balance of the paper we focus on transition matroids and interlacement graphs.

\section{Ghier's theorem}

Ghier \cite{gh} characterized the double occurrence words with connected,
isomorphic interlacement graphs, and several years later a similar
characterization of chord diagrams with isomorphic interlacement graphs was
independently discovered by Chmutov and Lando \cite{ChL}. In this section we
deduce Theorem \ref{iso} from Ghier's theorem.

\subsection{Ghier's theorem for double occurrence words}

A double occurrence word in letters $v_{1},...,v_{n}$ is a word of length $2n$
in which every $v_{i}$ occurs twice, and the interlacement graph of a double
occurrence word is the simple graph with vertices $v_{1},...,v_{n}$ in which
$v_{i}$ and $v_{j}$ are adjacent if and only if they are interlaced, i.e.,
they appear in the order $v_{i}...v_{j}...v_{i}...v_{j}$ or $v_{j}%
...v_{i}...v_{j}...v_{i}$.

\begin{definition}
An \emph{equivalence} between two double occurrence words is a finite sequence
of cyclic permutations $x_{1}...x_{2n}\mapsto x_{i}...x_{2n}x_{1}...x_{i-1}$
and reversals $x_{1}...x_{2n}\mapsto x_{2n}...x_{1}$.
\end{definition}

\begin{definition}
\cite{gh} Let $\{v_{1},...,v_{n}\}$ be the union of two disjoint subsets,
$V^{\prime}$ and $V^{\prime\prime}$. Let $W$ be a double occurrence word in
$v_{1},...,v_{n}$ of the form $W_{1}^{\prime}W_{1}^{\prime\prime}W_{2}%
^{\prime}W_{2}^{\prime\prime}$, where $W_{1}^{\prime}$, $W_{2}^{\prime}$
involve only letters in $V^{\prime}$ and $W_{1}^{\prime\prime}$,
$W_{2}^{\prime\prime}$ involve only letters in $V^{\prime\prime}$. Then the
corresponding \emph{turnarounds} of $W$ are the double occurrence words
$W_{1}^{\prime}\overleftarrow{W}_{1}^{\prime\prime}W_{2}^{\prime
}\overleftarrow{W}_{2}^{\prime\prime}$, $\overleftarrow{W}_{1}^{\prime}%
W_{1}^{\prime\prime}\overleftarrow{W}_{2}^{\prime}W_{2}^{\prime\prime}$ and
$\overleftarrow{W}_{1}^{\prime}\overleftarrow{W}_{1}^{\prime\prime
}\overleftarrow{W}_{2}^{\prime}\overleftarrow{W}_{2}^{\prime\prime}$, where
the arrow indicates reversal of a subword.
\end{definition}

In Ghier's definition each of $V^{\prime}$, $V^{\prime\prime}$ must have at
least two elements, but there is no harm in ignoring this stipulation because
the effect is simply to include some equivalences among the turnarounds. Ghier
observed that the first type of turnaround suffices, up to equivalence: a
turnaround of the second type is a composition of cyclic permutations with a
turnaround of the first type, and a turnaround of the third type is a
composition of turnarounds of the first and second types. There are also
several other types of operations that can be obtained by composing
turnarounds with equivalences. Some of these operations involve no actual
\textquotedblleft turning around.\textquotedblright\ For instance,
$W_{1}^{\prime}W_{1}^{\prime\prime}W_{2}^{\prime}W_{2}^{\prime\prime}\mapsto$
$W_{1}^{\prime}W_{2}^{\prime\prime}W_{2}^{\prime}W_{1}^{\prime\prime}$ is
obtained by applying a reversal $W_{1}^{\prime}W_{1}^{\prime\prime}%
W_{2}^{\prime}W_{2}^{\prime\prime}\mapsto$ $\overleftarrow{W}_{2}%
^{\prime\prime}\overleftarrow{W}_{2}^{\prime}\overleftarrow{W}_{1}%
^{\prime\prime}\overleftarrow{W}_{1}^{\prime}$, a turnaround $\overleftarrow
{W}_{2}^{\prime\prime}\overleftarrow{W}_{2}^{\prime}\overleftarrow{W}%
_{1}^{\prime\prime}\overleftarrow{W}_{1}^{\prime}\mapsto$ $W_{2}^{\prime
\prime}W_{2}^{\prime}W_{1}^{\prime\prime}W_{1}^{\prime}$, and then a cyclic
permutation $W_{2}^{\prime\prime}W_{2}^{\prime}W_{1}^{\prime\prime}%
W_{1}^{\prime}\mapsto$ $W_{1}^{\prime}W_{2}^{\prime\prime}W_{2}^{\prime}%
W_{1}^{\prime\prime}$.

It is not difficult to see that equivalences and turnarounds preserve
interlacement graphs. Building on earlier work of Bouchet \cite{Bec}, Ghier
proved the following converse:

\begin{theorem}
\cite{gh} Two double occurrence words have isomorphic, connected interlacement
graphs if and only if some finite sequence of equivalences and turnarounds
transforms one word into the other.
\end{theorem}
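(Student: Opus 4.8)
The argument splits into the easy implication (equivalences and turnarounds preserve the interlacement graph) and the hard converse. For the easy direction I would argue by a direct case analysis. Cyclic permutations and a reversal of the whole word obviously fix every interlacement, so it suffices to treat a single turnaround $W_1^{\prime}W_1^{\prime\prime}W_2^{\prime}W_2^{\prime\prime}\mapsto W_1^{\prime}\overleftarrow{W}_1^{\prime\prime}W_2^{\prime}\overleftarrow{W}_2^{\prime\prime}$. Classify each letter by which of the four blocks contains its two occurrences. The blocks $W_1^{\prime}$ and $W_2^{\prime}$ are untouched, so interlacements within $V^{\prime}$ are unchanged, and for a letter of $V^{\prime\prime}$ the reversal can only move its occurrences around inside a block, never across blocks. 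Running through the possibilities for a pair of letters — both in $V^{\prime}$, both in $V^{\prime\prime}$, or one of each, and in each case the admissible block patterns — one checks that reversing $W_1^{\prime\prime}$ and $W_2^{\prime\prime}$ \emph{simultaneously} leaves the cyclic alternation pattern, hence the interlacement, of every pair unchanged. (It is essential that both double-primed blocks be reversed at once; reversing only one need not preserve interlacement.) Ghier's reductions — a type-2 turnaround is a type-1 turnaround composed with cyclic permutations, a type-3 is a composition of types 1 and 2 — then give the claim for the whole group of moves.

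For the converse I would lean on the structure theory of circle graphs. The main external inputs are Cunningham's canonical split decomposition, which writes any connected graph on at least three vertices uniquely as a tree of splits whose components are prime graphs, complete graphs, and stars, together with Bouchet's theorem (the cited \cite{Bec}) that a \emph{prime} circle graph has a chord-diagram realization unique up to reflection. The plan is first to set up a dictionary: a double occurrence word $W$ realizing a connected circle graph $G$ amounts to a chord-diagram realization of each component of the split decomposition of $G$ together with a gluing datum at each split (where, and in which of the finitely many admissible ways, consecutive components are joined along their marker chords); and the moves on $W$ induced by equivalences and turnarounds are exactly: reverse one component's realization, change one gluing datum, or permute chords within a complete or a star component. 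Connectedness of $G$ is what makes this dictionary clean, since for a disconnected graph a realization also records how one component's chords are nested inside the faces of another, and those choices are not reachable by turnarounds.

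Granting the dictionary, the converse follows by induction on the number of letters. If $G$ is prime, Bouchet's theorem forces two words realizing $G$ to differ by a cyclic rotation and possibly a reversal, both equivalences. If $G$ is complete or a star, one checks directly that all its double occurrence realizations form a single orbit under turnarounds and equivalences. If $G$ has a nontrivial split $V=A\cup B$, the key lemma is that any word realizing $G$ can, after an equivalence, be put in the four-block turnaround form $W_1^{A}W_1^{B}W_2^{A}W_2^{B}$; one then contracts each side to a marker letter to obtain two shorter words realizing the two split components, applies the induction hypothesis to each, and uses turnarounds to bring the two gluing data into agreement. I expect this key lemma to be the main obstacle: it requires converting the purely combinatorial splitness of $A\cup B$ (edges across the cut forming a pattern $A^{\prime}\times B^{\prime}$) into the rigid statement that the occurrences of $A$-letters and of $B$-letters cyclically interleave in at most four runs, and this translation is precisely where the arguments of Bouchet and Ghier do their real work.
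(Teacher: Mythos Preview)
The paper does not prove this theorem; it is quoted as Ghier's result \cite{gh}, with only the remark that the easy direction ``is not difficult to see'' and that the converse was proved by Ghier ``building on earlier work of Bouchet \cite{Bec}.'' So there is no proof in the paper to compare your proposal against.

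That said, your outline is in the right spirit and is compatible with what the paper hints at. The easy direction is fine. For the converse, your plan---reduce to the prime components of the split decomposition and invoke Bouchet's uniqueness theorem for chord-diagram realizations of prime circle graphs---is exactly the kind of argument Ghier's paper develops, and the paper's citation of \cite{Bec} points to precisely that ingredient. You are also right that the crux is the ``key lemma'' translating a split of the interlacement graph into a four-block cyclic factorization of the word; this is where the real combinatorics sits, and your sketch honestly flags it rather than waves past it. One small caution: for the degenerate components (complete graphs and stars) the claim that all realizations form a single orbit under your moves needs more than a one-line check, since these components can have many letters and the realizations differ by arbitrary permutations of chord endpoints; you should be explicit that the turnaround moves you allow are rich enough to generate those permutations.
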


In order to consider double occurrence words with disconnected interlacement
graphs, let us define a \emph{concatenation }of two disjoint double occurrence
words $w_{1}...w_{2m}$ and $x_{1}...x_{2n}\,$\ in the obvious way: it is
$w_{1}...w_{2m}x_{1}...x_{2n}$ or $x_{1}...x_{2n}w_{1}...w_{2m}$. (Here
\textquotedblleft disjoint\textquotedblright\ means that no letter occurs in
both words.) Clearly then the interlacement graph of a concatenation is the
disjoint union of the interlacement graphs of the concatenated words. The
converse holds too:

\begin{lemma}
\label{disconnect}The interlacement graph of a double occurrence word is
disconnected if and only if the word is equivalent to a concatenation of two
nonempty subwords.
\end{lemma}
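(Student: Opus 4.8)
The plan is to prove the two directions of the equivalence separately; the backward direction is essentially a restatement of facts already recorded, and the forward direction is the substantive one. For the backward direction: if $W$ is equivalent to a concatenation $W_{1}W_{2}$ of nonempty, disjoint double occurrence subwords, then — using that equivalences preserve interlacement graphs and that the interlacement graph of a concatenation is the disjoint union of the interlacement graphs of its parts — $\mathcal{I}(W)$ is isomorphic to the disjoint union of the nonempty graphs $\mathcal{I}(W_{1})$ and $\mathcal{I}(W_{2})$, hence disconnected.

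For the forward direction I would pass to the chord diagram of $W$: place $2n$ points on the circle bounding a disk $D$, in the cyclic order dictated by $W$, and join the two occurrences of each letter by a straight chord, so that two chords cross exactly when the corresponding letters are interlaced. It is enough to exhibit a \emph{split} of $W$, i.e.\ a cut of the cyclic word into two nonempty arcs each of which is itself a double occurrence word, since such a split displays $W$, up to a cyclic permutation, as a concatenation of two nonempty subwords. So assume $\mathcal{I}(W)$ is disconnected and let $K$ be one of its (two or more) connected components. Because the subgraph of $\mathcal{I}(W)$ induced on $K$ is connected, any two $K$-chords are linked by a chain of pairwise crossing $K$-chords, so the union $X_{K}$ of the $K$-chords is a connected subset of $D$. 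No chord outside $K$ meets $X_{K}$ (distinct components are non-interlaced and all $2n$ endpoints are distinct), and for the same reason the chords of any other component $K'$ form a connected set; hence each such $K'$ lies entirely within a single region of $D\setminus X_{K}$. Choose a region $R$ of $D\setminus X_{K}$ that contains at least one component $K'\neq K$ (one exists because $\mathcal{I}(W)$ has at least two components), and let $S$ be the union of those components whose chords lie in $R$. Granting the geometric claim stated below, the boundary arc $\alpha:=\overline{R}\cap\partial D$ is a single circular arc carrying no endpoint of a $K$-chord; every letter of $S$ has both its occurrences on $\alpha$ and no other letter does; so cutting the circle at the two endpoints of $\alpha$ splits $W$ as $W_{S}W_{\mathrm{rest}}$, with $W_{S}$ nonempty (since $S\neq\varnothing$) and $W_{\mathrm{rest}}$ nonempty (since $K\subseteq V\setminus S$). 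This is the desired concatenation.

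The heart of the argument — and the step I expect to demand the most care — is the geometric claim: $\overline{R}\cap\partial D$ is contained in a single arc of $\partial D$ lying between consecutive endpoints of $K$-chords. I would prove this by contradiction. If $\overline{R}$ met two distinct such arcs $\alpha\neq\beta$, pick $p\in\alpha$, $q\in\beta$, and a simple arc $\gamma\subseteq R$ from $p$ to $q$ that meets $\partial D$ only at $p,q$ and passes through no chord endpoint; then $\gamma$ separates $D$ into two sub-disks, $D_{1}$ bounded by $\gamma$ and one of the two circular arcs $\delta$ joining $p$ to $q$, and $D_{2}$ bounded by $\gamma$ and the other arc $\delta'$. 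Since $\alpha\neq\beta$, each of $\delta,\delta'$ contains at least one $K$-endpoint. Every $K$-chord avoids $\gamma$ and has both endpoints on $\partial D$, so its relative interior lies in one of the two components of $D\setminus\gamma$, which forces both of its endpoints into $\delta$ or both into $\delta'$; thus no $K$-chord straddles $\gamma$. Consequently $X_{K}$ splits as a union of its part in $\overline{D_{1}}$ and its part in $\overline{D_{2}}$, each closed and each nonempty ($\delta$ and $\delta'$ each carry a $K$-endpoint, and the chord at that endpoint lies entirely on the corresponding side); since $X_{K}$ is disjoint from $\gamma=\overline{D_{1}}\cap\overline{D_{2}}$, these two parts are themselves disjoint, so $X_{K}$ is disconnected — contradicting the connectedness established above. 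The usual general-position considerations justify the existence of a suitable $\gamma$. With this claim in hand, the forward direction, and hence the lemma, is complete.
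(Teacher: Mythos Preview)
Your proof is correct and takes a genuinely different route from the paper's. The paper argues purely combinatorially on the word: after cyclically permuting so that $x_{1}$ and $x_{2n}$ lie in different components of the interlacement graph, it lets $i$ be the last index whose letter lies in the component of $x_{1}$, and then shows by a path-in-the-interlacement-graph argument that no letter occurring after position $i$ has its partner before position $i$; hence the word is literally the concatenation $x_{1}\ldots x_{i}$ followed by $x_{i+1}\ldots x_{2n}$. Your argument is geometric, working in the chord diagram: you use that the chords of a connected component form a topologically connected subset $X_{K}$ of the disk, and then a Jordan-curve separation argument to show that any complementary region of $X_{K}$ meets the boundary circle in a single arc. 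Your approach gives a slightly stronger structural picture (the chord bundles of distinct components are nested in the disk) at the price of invoking mild planar topology; the paper's approach is entirely elementary and stays within the word formalism, which matches the style of the surrounding section on double occurrence words. One small imprecision worth tightening: your $\alpha=\overline{R}\cap\partial D$ is the \emph{closed} arc and therefore does carry exactly two $K$-chord endpoints, namely its ends; the split you want cuts the cyclic word just inside these two endpoints, so that they land in $W_{\mathrm{rest}}$. This is cosmetic and does not affect the argument.
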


\begin{proof}
Suppose $x_{1}...x_{2n}$ is a double occurrence word in $v_{1},...,v_{n}$,
with a disconnected interlacement graph. Using a cyclic permutation if
necessary, we may presume that $x_{1}$ and $x_{2n}$ lie in different connected
components of the interlacement graph. Let $i$ be the greatest index such that
$x_{i}$ lies in the same connected component of the interlacement graph as
$x_{1}$; necessarily then $x_{i}$ is the second appearance of the
corresponding letter.

We claim that there is no $j>i$ such that $x_{j}=x_{j^{\prime}}$ with
$j^{\prime}<i$. Suppose instead that there is such a $j$. As $j>i$, $x_{j}$ is
not in the connected component of the interlacement graph that contains
$x_{1}$ and $x_{i}$, so $x_{j}$ is not interlaced with either; hence the
double occurrence word must be of the form $x_{1}Ax_{1}Bx_{j}Cx_{i}%
Dx_{i}Ex_{j}F$ for some double occurrence word $ABCDEF$ in the remaining
letters. Choose a path from $x_{1}$ to $x_{i}$ in the interlacement graph; say
it is $x_{1},x_{k_{1}},x_{k_{2}},...,x_{k_{p-1}},x_{i}$. As $x_{k_{1}}$ is
interlaced with $x_{1}$, $x_{k_{1}}$ must appear precisely once in $A$. As
$x_{k_{1}}$ is not interlaced with $x_{j}$, its other appearance must be in
$B$ or $F$; hence $x_{k_{1}}$ appears twice in $Fx_{1}Ax_{1}B$. As $x_{k_{2}}$
is interlaced with $x_{k_{1}}$, $x_{k_{2}}$ must appear at least once in
$Fx_{1}Ax_{1}B$. As $x_{k_{2}}$ is not interlaced with $x_{j}$, its other
appearance cannot be in $Cx_{i}Dx_{i}E$, so it must be in $Fx_{1}Ax_{1}B$;
hence $x_{k_{2}}$ appears twice in $Fx_{1}Ax_{1}B$. Similarly, $x_{k_{3}}$
must appear at least once in $Fx_{1}Ax_{1}B$ (as it is interlaced with
$x_{k_{2}}$) and cannot appear precisely once in $Fx_{1}Ax_{1}B$ (as it is not
interlaced with $x_{j}$), so $x_{k_{3}}$ also appears twice in $Fx_{1}Ax_{1}%
B$. The same reasoning holds for $x_{k_{4}}$, $x_{k_{5}}$, and so on.
Eventually we conclude that $x_{k_{p-1}}$ appears twice in $Fx_{1}Ax_{1}B$;
but this contradicts the fact that $x_{k_{p-1}}$ is interlaced with $x_{i}$.

The contradiction verifies the claim. It follows that $x_{1}...x_{2n}$ is the
concatenation of $x_{1}...x_{i}$ and $x_{i+1}...x_{2n}$.
\end{proof}

\begin{definition}
A \emph{disjoint family} of double occurrence words is a finite set $\{W_{1}$,
..., $W_{k}\}$ of double occurrence words, no two of which share any letter.
The \emph{interlacement graph} of $\{W_{1},...,W_{k}\}$ is the union of the
interlacement graphs of $W_{1},...,W_{k}$.
\end{definition}

In particular, if $W$ is a double occurrence word then $\{W\}$ is a singleton
disjoint family.

\begin{definition}
\label{equivalence}An \emph{equivalence} between two disjoint families of
double occurrence words is some sequence of the following operations:
cyclically permuting a word, reversing a word, replacing two disjoint words
with their concatenation, and separating a concatenation of two disjoint words
into the two separate words.
\end{definition}

If $\{W_{1},...,W_{k}\}$ is a disjoint family of double occurrence words whose
interlacement graph has $c$ connected components then $1\leq k\leq c$, and
$k=c$ if and only if each $W_{i}$ has a connected interlacement graph.
Considering Lemma \ref{disconnect}, we see that $\{W_{1},...,W_{k}\}$ is
equivalent to disjoint families of all cardinalities in the set $\{1,...,c\}$.

\begin{definition}
\label{turnaround}A \emph{turnaround} of a disjoint family of double
occurrence words is a disjoint family obtained by applying a turnaround to
some element of the original disjoint family.
\end{definition}

Here is the general form of Ghier's theorem.

\begin{corollary}
\label{ghier}Two disjoint families of double occurrence words have isomorphic
interlacement graphs if and only if one disjoint family can be obtained from
the other by some finite sequence of equivalences and turnarounds.
\end{corollary}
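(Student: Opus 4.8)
The plan is to bootstrap the general statement from Ghier's theorem for connected interlacement graphs, using Lemma \ref{disconnect} to reduce to the connected case componentwise. First I would observe that the ``if'' direction is routine: equivalences and turnarounds, as defined for disjoint families in Definitions \ref{equivalence} and \ref{turnaround}, preserve interlacement graphs. Cyclic permutations and reversals of a single word preserve its interlacement graph; concatenation and separation merely regroup words without touching any letter's two occurrences, so the interlacement graph (a disjoint union) is unchanged; and turnarounds preserve interlacement graphs by the remark already made in the connected case. Hence the whole content is the ``only if'' direction.

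For the ``only if'' direction, suppose $\{W_1,\dots,W_k\}$ and $\{W_1',\dots,W_\ell'\}$ are disjoint families with an isomorphism $g$ between their interlacement graphs. Let $G_1,\dots,G_c$ be the connected components of this common interlacement graph (up to the relabeling $g$). First I would use Lemma \ref{disconnect}, together with the equivalence operations of concatenation and separation, to replace each family by an equivalent family in which \emph{every} word has a connected interlacement graph --- i.e., both families are brought to cardinality $c$, one word per component. (The remark following Definition \ref{turnaround} already notes that a disjoint family is equivalent to families of all cardinalities in $\{1,\dots,c\}$; the point is precisely to land at $c$.) After this reduction, each word $W_i$ of the first family has connected interlacement graph, say realizing component $G_{\sigma(i)}$, and similarly each $W_j'$ realizes some $G_{\tau(j)}$, where $\sigma$ and $\tau$ are bijections $\{1,\dots,c\}\to\{1,\dots,c\}$. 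Composing with the reordering implicit in concatenation/separation (which lets us permute the words in a family freely, since any two disjoint words may be concatenated and then re-separated in the other order), I may assume $W_i$ and $W_i'$ both have interlacement graph $G_i$ for each $i$.

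Now for each $i$ the words $W_i$ and $W_i'$ are double occurrence words with isomorphic connected interlacement graph $G_i$, so Ghier's theorem (the connected case quoted in Section 4) provides a finite sequence of equivalences and turnarounds carrying $W_i$ to $W_i'$. Applying these sequences to the respective words inside the disjoint family --- which is legitimate, since equivalence of a disjoint family includes cyclically permuting or reversing any one word, and a turnaround of the family is by Definition \ref{turnaround} exactly a turnaround applied to one element --- transforms $\{W_1,\dots,W_c\}$ into $\{W_1',\dots,W_c'\}$. Finally, undoing the concatenation/separation normalization at both ends (which are themselves equivalence moves) yields a finite sequence of equivalences and turnarounds from the original first family to the original second family, as required.

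The main obstacle I expect is the bookkeeping in the reduction step: making precise that concatenation and separation generate enough flexibility to (i) reach cardinality $c$ with one connected word per component and (ii) permute the words of a family arbitrarily, so that the component-by-component matching given by the graph isomorphism $g$ can actually be realized by the equivalence moves. Lemma \ref{disconnect} is exactly what powers (i) --- repeatedly separate any word whose interlacement graph is disconnected --- and (ii) follows by concatenating two words and re-separating them in swapped order; but stating this cleanly, and checking that the target family $\{W_j'\}$ can be brought to the \emph{same} ordering, is where care is needed. Everything after that is a direct invocation of the connected case of Ghier's theorem applied componentwise.
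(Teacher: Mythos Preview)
Your proposal is correct and follows exactly the approach of the paper's proof: reduce each disjoint family to an equivalent one with one word per connected component (via Lemma~\ref{disconnect} and the concatenation/separation moves), then invoke the connected case of Ghier's theorem wordwise. Your only superfluous worry is the ``permuting the words'' bookkeeping in~(ii): a disjoint family is by definition a \emph{set}, so there is no ordering to realize, and once both families are normalized to cardinality~$c$ the graph isomorphism~$g$ already tells you which word pairs with which.
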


\begin{proof}
If necessary, replace each disjoint family by an equivalent one in which every
double occurrence word has a connected interlacement graph. Then apply the
original form of Ghier's theorem to the individual words.
\end{proof}

\subsection{Ghier's theorem for 4-regular graphs}

An Euler system $C$ in a 4-regular graph $F$ with $c(F)$ connected components
gives rise to many equivalent disjoint families of double occurrence words
$W(C)=\{W_{1}(C),...,W_{c(F)}(C)\}$. For $1\leq i\leq c(F)$ the double
occurrence word $W_{i}(C)$ is obtained by reading off the vertices visited by
the $i$th element of $C$, in order. Clearly $W(C)$ is defined only up to
cyclic permutations and reversals of its elements, but in any case the
interlacement graph of $W(C)$ is $\mathcal{I}(C)$. Corollary \ref{ghier}
implies the following.

\begin{corollary}
\label{ghi} Let $F$ and $F^{\prime}$ be 4-regular graphs with Euler systems
$C$ and $C^{\prime}$. Then $\mathcal{I}(C)\cong\mathcal{I}(C^{\prime})$ if and
only if $W(C)$ and $W(C^{\prime})$ differ only by equivalences and turnarounds.
\end{corollary}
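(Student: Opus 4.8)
The plan is to deduce Corollary \ref{ghi} directly from Corollary \ref{ghier}, using the observation recorded immediately before the statement: for an Euler system $C$ of a 4-regular graph $F$, $W(C)$ is a disjoint family of double occurrence words whose interlacement graph is precisely $\mathcal{I}(C)$, and likewise $\mathcal{I}(C')$ is the interlacement graph of $W(C')$. So the corollary is just the transcription of Corollary \ref{ghier} through the dictionary $C \leftrightarrow W(C)$, $\mathcal{I}(C) = \mathcal{I}(W(C))$.

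For the reverse implication I would first note that equivalences and turnarounds of disjoint families preserve the interlacement graph. For the elementary operations comprising an equivalence this is immediate: cyclically permuting or reversing a word leaves its interlacement graph unchanged, while concatenation and separation merely regroup words without altering the union of their interlacement graphs. For a turnaround, the text has already observed that a turnaround of a single word preserves its interlacement graph, and Definition \ref{turnaround} applies a turnaround to one element of the family, leaving the other words untouched. Hence if $W(C)$ and $W(C')$ differ only by equivalences and turnarounds, then $\mathcal{I}(C)$ and $\mathcal{I}(C')$ are isomorphic. For the forward implication, suppose $\mathcal{I}(C) \cong \mathcal{I}(C')$; since these are the interlacement graphs of $W(C)$ and $W(C')$, Corollary \ref{ghier} supplies a finite sequence of equivalences and turnarounds carrying $W(C)$ to $W(C')$, as desired.

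The one point that deserves a sentence of care is that $W(C)$ is defined only up to cyclic permutation and reversal of its constituent words, and up to the order in which the $c(F)$ Euler circuits are listed; but these ambiguities are themselves instances of the equivalence relation of Definition \ref{equivalence}, so the statement is well posed and the argument above is unaffected. I do not expect a genuine obstacle here. The only mild bookkeeping is reconciling the possibly different sizes of the families $W(C)$ and $W(C')$ with the number of connected components of the common interlacement graph — but this is already absorbed by the concatenation/separation moves built into Definition \ref{equivalence} (cf. the remark following that definition), exactly as in the proof of Corollary \ref{ghier}. So the proof will be short: state the correspondence, invoke preservation of interlacement graphs for the "if" direction, and invoke Corollary \ref{ghier} for the "only if" direction.
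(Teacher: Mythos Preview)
Your proposal is correct and matches the paper's approach exactly: the paper simply asserts that Corollary \ref{ghier} implies Corollary \ref{ghi}, relying on the immediately preceding observation that $W(C)$ is a disjoint family of double occurrence words with interlacement graph $\mathcal{I}(C)$. Your only addition is spelling out the routine verification that equivalences and turnarounds preserve interlacement graphs and that the ambiguity in defining $W(C)$ is absorbed by Definition \ref{equivalence}, which is harmless and appropriate.
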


On the other hand, a disjoint family $W=\{W_{1},...,W_{k}\}$ of double
occurrence words in the letters $v_{1},...,v_{n}$ gives rise to an Euler
system $C(W)$ in a 4-regular graph $F(W)$ with $V(F(W))=\{v_{1},...,v_{n}\}$.
$F(W)$ has $k$ connected components, and the vertices of the $i$th connected
component are the letters $v_{j}$ that appear in $W_{i}$. If $W_{i}%
=x_{1}...x_{2p}$ then for $1\leq q<2p$ there is an edge of this connected
component from $x_{q}$ to $x_{q+1}$, and there is also an edge from $x_{2p}$
to $x_{1}$. $C(W)$ includes the Euler circuits obtained directly from
$W_{1},...,W_{k}$. Clearly $W$ is equivalent to $W(C(W))$, and $C(W(C))=C$ for
any\ Euler system $C$ in a 4-regular graph.%

\begin{figure}
[ptb]
\begin{center}
\includegraphics[
trim=1.870996in 6.953338in 1.214193in 1.272247in,
height=2.1136in,
width=4.0897in
]%
{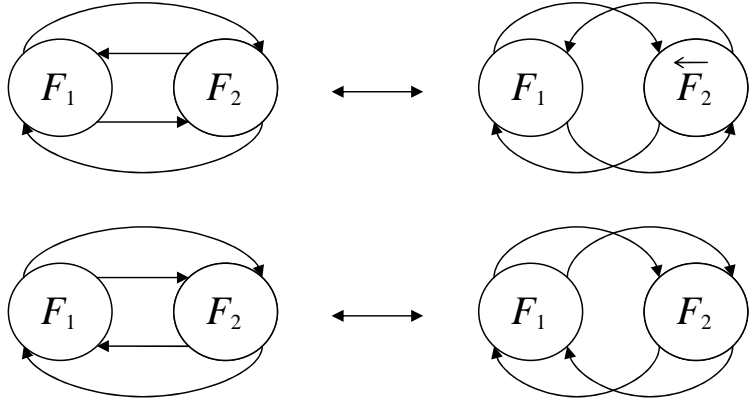}%
\caption{Orientation-reversing and orientation-preserving balanced mutations.}%
\label{tranmf5b}%
\end{center}
\end{figure}

A double occurrence word is oriented in a natural way: we read the word from
beginning to end. In order to deduce Theorem \ref{iso} of the introduction
from Corollary \ref{ghi}, then, we must understand\ the balanced mutations of
Definition \ref{mutation} as operations on directed graphs. Recall that a
direction of an edge of a graph is determined by designating one of the
half-edges as initial, and the other as terminal.\ A balanced orientation of a
4-regular graph is a set of edge-directions with the property that at each
vertex, there are two initial half-edges and two terminal half-edges. It is
easy to see that every 4-regular graph $F$ has balanced orientations: just
walk along the circuits of an Euler system to determine edge-directions. If
$F$ has subgraphs $F_{1}$ and $F_{2}$ as in Definition \ref{mutation}, then a
balanced orientation of $F$ must include two edges directed from $F_{1}$ to
$F_{2}$, and two edges directed from $F_{2}$ to $F_{1}$. There are three
different pairings of these four edges; two of them pair oppositely directed
edges, and the third pairs like-directed edges. No matter which edge-pairing
is used, balanced orientations of $F$ and $F^{\prime}$ correspond to each
other directly. This correspondence is illustrated in Fig. \ref{tranmf5b},
where the notation $\overleftarrow{F}_{2}$ is used to indicate that
edge-directions within $F_{2}$ have been reversed.

Fig. \ref{tranmf5b} depicts two types of balanced mutations but as indicated
in Fig. \ref{tranmf5c}, the first type actually suffices: a balanced mutation
involving pairs of like-directed edges can be performed by combining two
balanced mutations involving pairs of oppositely-directed edges. In Fig.
\ref{tranmf5c} the edge-pairing used in the second of these
orientation-reversing mutations is indicated by dashes.%

\begin{figure}
[ptb]
\begin{center}
\includegraphics[
trim=1.267723in 8.570061in 0.805497in 1.205113in,
height=0.9504in,
width=4.8447in
]%
{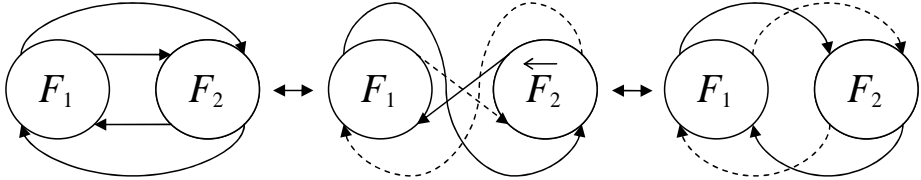}%
\caption{Orientation-reversing balanced mutations suffice.}%
\label{tranmf5c}%
\end{center}
\end{figure}

\begin{theorem}
\label{ghier2} Let $F$ and $F^{\prime}$ be 4-regular graphs. Then $F$ and
$F^{\prime}$ have Euler systems $C$ and $C^{\prime}$ with $\mathcal{I}%
(C)\cong\mathcal{I}(C^{\prime})$ if and only if $F$ and $F^{\prime}$ differ
only by connected sums, separations and balanced mutations.
\end{theorem}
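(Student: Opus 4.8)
The plan is to read off Theorem~\ref{ghier2} from the general form of Ghier's theorem, Corollary~\ref{ghi}, by translating back and forth between the word operations on disjoint families of double occurrence words (equivalences and turnarounds) and the graph operations on $4$-regular graphs (connected sums, separations, balanced mutations). The bridge is the correspondence $W\mapsto(F(W),C(W))$, the assignment $C\mapsto W(C)$, and the identities $C(W(C))=C$ and ``$W$ equivalent to $W(C(W))$''; since cyclically consecutive letters of $W(C)$ correspond to the edges of $F$ traversed by $C$, we may identify $F$ with $F(W(C))$ and $F'$ with $F(W(C'))$.

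For the forward implication, assume $\mathcal I(C)\cong\mathcal I(C')$. By Corollary~\ref{ghi} a finite sequence of equivalences and turnarounds carries $W(C)$ to $W(C')$, so it suffices to see how each elementary step changes the associated $4$-regular graph. Cyclic permutation and reversal of a word preserve the cyclic adjacencies of letters, hence leave the graph unchanged. Concatenating two disjoint words deletes the ``closing'' edge of each and inserts the two edges joining their endpoints, which is a connected sum of the corresponding components. Separating a concatenation deletes its two junction edges, which form a two-edge cut, and re-inserts the two closing edges, i.e.\ performs a separation. A first-type turnaround $W_1'W_1''W_2'W_2''\mapsto W_1'\overleftarrow{W}_1''W_2'\overleftarrow{W}_2''$ regroups the four junction edges joining the $V'$-letters to the $V''$-letters into two pairs and exchanges ends within each pair, which is exactly a balanced mutation with $F_1$ and $F_2$ the subgraphs induced on $V'$ and $V''$; degenerate turnarounds merely reproduce equivalences and change nothing. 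Composing these translations along the sequence displays $F$ as obtained from $F'$ by connected sums, separations and balanced mutations.

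For the converse, note that a separation is the inverse of a connected sum and that the inverse of a balanced mutation is a balanced mutation, so it is enough to prove: if $H$ is obtained from $G$ by one of the three operations, then for \emph{every} Euler system $C_G$ of $G$ there is an Euler system $C_H$ of $H$ with $\mathcal I(C_H)\cong\mathcal I(C_G)$; chaining this along the given sequence, starting from any Euler system of $F'$, finishes the proof. For a connected sum or a separation, cyclically permute the word(s) of $W(C_G)$ so that the two edges being recombined occupy ``closing'' positions — possible since the relevant Euler circuit traverses each cut edge exactly once and alternates sides, and since a $4$-regular graph has no bridge a two-edge cut lies in a single component — and then concatenate or separate; the resulting family is equivalent to $W(C_G)$ and has graph $H$, so one takes $C_H=C(\tilde W)$. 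For a balanced mutation across a four-edge cut between $F_1$ and $F_2$, if the cut is distributed between two ``mixed'' components of $G$ then, according to the pairing, the mutation either acts independently on each component (a connected sum or a separation there) or is a composition of connected sums, and we reduce to the previous cases; otherwise all four cut edges lie in one component $K$, whose Euler circuit crosses the cut exactly four times and alternates sides, so after a cyclic permutation its word has the form $W_1'W_1''W_2'W_2''$ with $V'$- and $V''$-blocks alternating. The first-type and the second-type turnaround (the latter a composition of cyclic permutations with a first-type one) realize the two ``orientation-reversing'' pairings of the four edges, and the remaining ``orientation-preserving'' pairing is realized by composing two orientation-reversing mutations as in Fig.~\ref{tranmf5c}, propagating the Euler system at each step. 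In all cases the word of $C_H$ differs from $W(C_G)$ by equivalences and at most two turnarounds, so $\mathcal I(C_H)\cong\mathcal I(C_G)$.

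I expect the balanced-mutation case of the converse to be the main obstacle. One must verify that an arbitrary Euler circuit presents a prescribed four-edge cut with exactly the block structure a turnaround requires, and then check, by explicitly following the eight half-edges at the cut, that the two turnaround modes and their composition realize all three pairings while a compatible Euler system is carried along. The degenerate configurations — loops at a vertex, $F_1$ or $F_2$ a single vertex, or a four-edge cut split among components — also have to be folded in, but each reduces to connected sums and separations or to a single-vertex transition change, which is itself a balanced mutation.
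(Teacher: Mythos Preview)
Your proposal is correct and follows essentially the same route as the paper: both directions are read off from Corollary~\ref{ghi} by translating equivalences and turnarounds of $W(C)$ into connected sums, separations and balanced mutations of $F(W)$, with the balanced-mutation case handled via the alternating block structure of the Euler circuit across the four-edge cut and the reduction of the orientation-preserving pairing to two orientation-reversing ones (Fig.~\ref{tranmf5c}). The only cosmetic difference is organizational: the paper first reduces every balanced mutation to the orientation-reversing type and then exhibits the single word transformation $W_{1}W_{2}X_{1}X_{2}\mapsto W_{1}\overleftarrow{X_{2}}X_{1}\overleftarrow{W_{2}}$ as a reversal--turnaround--cyclic-permutation composite, whereas you match the two orientation-reversing pairings directly to the first- and second-type turnarounds before invoking Fig.~\ref{tranmf5c}.
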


\begin{proof}
If $F$ and $F^{\prime}$ have Euler systems $C$ and $C^{\prime}$ with
$\mathcal{I}(C)\cong\mathcal{I}(C^{\prime})$ then Corollary \ref{ghi} tells us
that $W(C^{\prime})$ can be obtained from $W(C)$ by equivalences and
turnarounds. Applying cyclic permutations and reversals to elements of $W$ has
no effect on $F(W)$ or $C(W)$; a concatenation has the effect of replacing two
connected components of $F(W)$ with a connected sum; separating a
concatenation has the effect of separating a connected sum; and a turnaround
of $W$ yields a balanced mutation of $F(W)$. See Fig. \ref{tranmf20} for a
schematic illustration of the turnaround $W_{1}^{\prime}W_{1}^{\prime\prime
}W_{2}^{\prime}W_{2}^{\prime\prime}\mapsto W_{1}^{\prime}\overleftarrow{W}%
_{1}^{\prime\prime}W_{2}^{\prime}\overleftarrow{W}_{2}^{\prime\prime}$. (N.b.
The walks indicated by dashed line segments within $F_{1}$ and $F_{2}$ are
edge-disjoint but not necessarily vertex-disjoint.)%
\begin{figure}
[ptb]
\begin{center}
\includegraphics[
trim=1.871845in 8.500726in 1.210794in 1.207315in,
height=1.0006in,
width=4.0897in
]%
{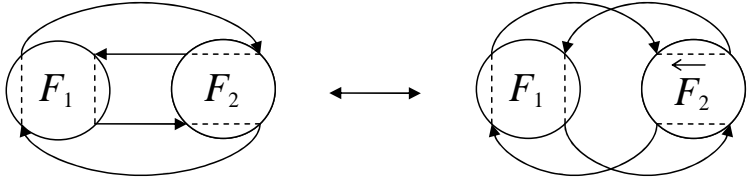}%
\caption{A turnaround of $W$ yields a balanced mutation of $F(W)$.}%
\label{tranmf20}%
\end{center}
\end{figure}

For the converse, suppose $F^{\prime}$ can be obtained from $F$ by a single
connected sum, separation or balanced mutation. Let $C$ be any Euler system of
$F$. If $F^{\prime}$ is obtained from $F$ with a single connected sum
operation then clearly $F^{\prime}$ has an Euler system $C^{\prime}$ such that
$W(C^{\prime})$ is obtained from $W(C)$ by concatenating two words. If
$F^{\prime}$ is obtained from $F$ by separating a connected sum then clearly
$F^{\prime}$ has an Euler system $C^{\prime}$ such that $W(C^{\prime})$ is
obtained from $W(C)$ by first (if necessary) cyclically permuting one word to
yield a concatenation of two disjoint subwords, and then replacing the
concatenation with the two disjoint words.

Suppose now that $F^{\prime}$ is obtained from $F$ by a balanced mutation. As
noted above, we may presume that $F$ and $F^{\prime}$ are given with balanced
orientations that agree within $F_{1}$ and disagree within $F_{2}$. Let $C$ be
a directed Euler system of $F$. Then the four edges connecting $F_{1}$ to
$F_{2}$ appear in either two circuits or one circuit of $C$, according to
whether or not they lie in different connected components of $F$. Fig.
\ref{tranmf18} indicates schematically that in either case, we can construct a
directed Euler system $C^{\prime}$ of $F^{\prime}$ by using the new edges to
connect the walks within $F_{1}$ and $F_{2}$ that constitute Euler circuits in
$C$. (The Euler circuits of other connected components are the same in
$C^{\prime}$ and $C$.) In the first case, $W(C^{\prime})$ can be obtained from
$W(C)$ using concatenations and separations. In the second case, $W(C^{\prime
})$ is obtained from $W(C)$ by a transformation of the form $W_{1}W_{2}%
X_{1}X_{2}\mapsto W_{1}\overleftarrow{X_{2}}X_{1}\overleftarrow{W_{2}}$, with
$W_{1}$ and $X_{1}$ the indicated walks within $F_{1}$. This transformation is
the composition of a reversal $W_{1}W_{2}X_{1}X_{2}\mapsto\overleftarrow
{X_{2}}\overleftarrow{X_{1}}\overleftarrow{W_{2}}\overleftarrow{W_{1}}$, a
turnaround $\overleftarrow{X_{2}}\overleftarrow{X_{1}}\overleftarrow{W_{2}%
}\overleftarrow{W_{1}}\mapsto\overleftarrow{X_{2}}X_{1}\overleftarrow{W_{2}%
}W_{1}$, and a cyclic permutation $\overleftarrow{X_{2}}X_{1}\overleftarrow
{W_{2}}W_{1}\mapsto W_{1}\overleftarrow{X_{2}}X_{1}\overleftarrow{W_{2}}$.%

\begin{figure}
[ptb]
\begin{center}
\includegraphics[
trim=1.866747in 7.087606in 1.214193in 1.216119in,
height=2.0565in,
width=4.0923in
]%
{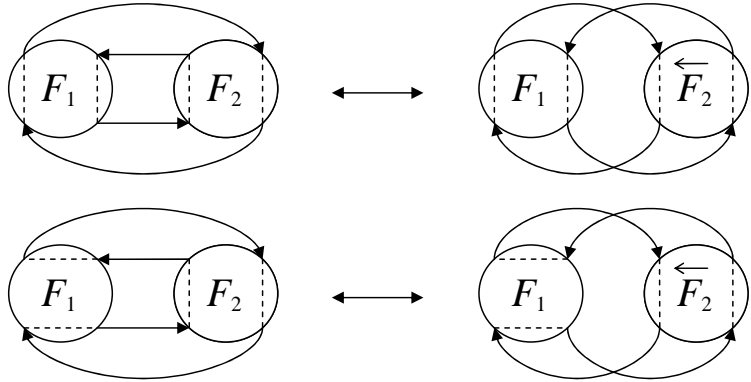}%
\caption{Directed Euler systems correspond under balanced mutation.}%
\label{tranmf18}%
\end{center}
\end{figure}

We conclude that if $F^{\prime}$ can be obtained from $F$ by connected sums,
separations and balanced mutations then it has an Euler system $C^{\prime}$
such that $W(C^{\prime})$ is obtained from $W(C)$ by equivalences and
turnarounds. Corollary \ref{ghi} then implies that $\mathcal{I}(C)\cong%
\mathcal{I}(C^{\prime})$.
\end{proof}

Theorem \ref{ghier2} allows us to deduce Theorem \ref{iso} from Theorem
\ref{adjiso}.%
\begin{figure}
[ptb]
\begin{center}
\includegraphics[
trim=2.006944in 7.028176in 0.944844in 1.212817in,
height=2.0998in,
width=4.1866in
]%
{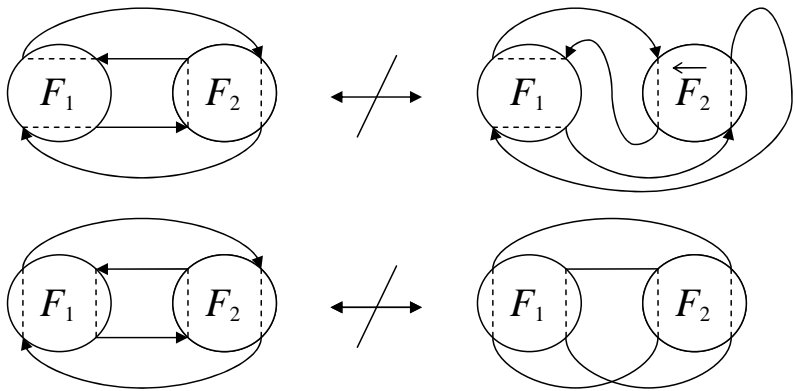}%
\caption{Euler systems need not correspond under \textquotedblleft unbalanced
mutation.\textquotedblright}%
\label{tranmf19}%
\end{center}
\end{figure}

Before proceeding, we should emphasize that the correspondence between Euler
systems illustrated in Fig. \ref{tranmf18} is peculiar to balanced mutations.
Graphs that are not quite balanced mutations do not exhibit the same
correspondence. See Fig. \ref{tranmf19}, which displays two examples of graphs
related through reassembly of four-element edge cuts. The first example is not
a balanced mutation because the edges do not correspond in pairs, and the
second is not a balanced mutation because there are only two new edges rather
than four. In each example, if the dashed walks within $F_{1}$ or $F_{2}$ are
not vertex-disjoint then one of the indicated circuit partitions may be an
Euler system, but the other cannot.

\section{Touch-graphs and a formula of Las Vergnas and Martin}

Suppose $P$ is a circuit partition in a 4-regular graph $F$. As mentioned in
the introduction, Bouchet \cite{Bi1} discussed a way to construct a
touch-graph $Tch(P)$ from $P$, with a vertex for each circuit of $P$ and an
edge for each vertex of $F$; the edge corresponding to $v$ is incident on the
vertex or vertices corresponding to circuit(s) of $P$ incident at $v$. (This
construction appeared implicitly in earlier work of Jaeger \cite{J1}.)

\begin{theorem}
\label{cycletouch}Let $F$ and $F^{\prime}$ be 4-regular graphs with isomorphic
transition matroids. Then\ some isomorphism $f:M_{\tau}(F)\rightarrow M_{\tau
}(F^{\prime})$ has the following properties:

\begin{enumerate}
\item There is a bijection $g:V(F)\rightarrow V(F^{\prime})$ that is
compatible with $f$, in the sense that for each $v\in V(F)$, the images under
$f\,$\ of the three transitions at $v$ are the three transitions of
$F^{\prime}$ at $g(v)$.

\item If $P$ and $P^{\prime}$ are circuit partitions with $\tau(P^{\prime
})=f(\tau(P))$, then $g$ defines an isomorphism between the cycle matroids of
$Tch(P)$ and $Tch(P^{\prime})$.
\end{enumerate}
\end{theorem}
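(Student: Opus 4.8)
Throughout I write $M_{\tau}(F)|_{A}$ for the restriction (deletion of the complement) of $M_{\tau}(F)$ to a subset $A\subseteq\mathfrak{T}(F)$. The plan is to produce the isomorphism $f$ and the bijection $g$ directly from interlacement data, and then to reduce part~2 to a single lemma identifying $M_{\tau}(F)|_{\tau(P)}$ with the cocycle matroid (dual cycle matroid) of $Tch(P)$.

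For part~1 I would \emph{not} start from an arbitrary abstract isomorphism. Instead, invoke the equivalence of statements $1$ and $5$ in Theorem~\ref{adjiso} to obtain Euler systems $C$ of $F$ and $C^{\prime}$ of $F^{\prime}$ together with a graph isomorphism $g\colon\mathcal{I}(C)\to\mathcal{I}(C^{\prime})$. Applying the permutation $g$ simultaneously to the rows and to each of the three column blocks of $M(C)=(I\mid\mathcal{A}(\mathcal{I}(C))\mid I+\mathcal{A}(\mathcal{I}(C)))$ turns it into $M(C^{\prime})$; hence the bijection $f\colon\mathfrak{T}(F)\to\mathfrak{T}(F^{\prime})$ sending $\phi_{C}(v)\mapsto\phi_{C^{\prime}}(g(v))$, $\chi_{C}(v)\mapsto\chi_{C^{\prime}}(g(v))$, $\psi_{C}(v)\mapsto\psi_{C^{\prime}}(g(v))$ is an isomorphism $M_{\tau}(F)\to M_{\tau}(F^{\prime})$ (row permutations do not change the represented matroid, and a column permutation that carries the column indices along is by definition a matroid isomorphism). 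By construction $f$ sends the three transitions at $v$ to the three transitions at $g(v)$, which is property~1; in particular $f$ maps transversals of vertex triples to transversals, so the partition $P^{\prime}$ in part~2 is well defined.

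The heart of the argument is the following lemma: \emph{for every circuit partition $P$ of a $4$-regular graph $F$, the restriction $M_{\tau}(F)|_{\tau(P)}$ equals the dual $M(Tch(P))^{*}$ of the cycle matroid of $Tch(P)$, under the natural identifications $\tau(P)\leftrightarrow V(F)\leftrightarrow E(Tch(P))$} (the first sending the $P$-transition at $v$ to $v$, the second sending $v$ to the edge of $Tch(P)$ labelled $v$). To prove this I would compare rank functions on subsets $S\subseteq V(F)$. Writing $H_{S}$ for the spanning subgraph of $Tch(P)$ with edge set $V(F)\setminus S$, one gets $r_{M(Tch(P))^{*}}(S)=|S|+c(F)-c(H_{S})$, using $c(Tch(P))=c(F)$ (the touch-graph of a connected component of $F$ is connected). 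On the other side, let $F^{\prime}$ be obtained from $F$ by detaching, at each $v\in S$, along the $P$-transition at $v$; by the relationship between detachment and the contract-one-delete-two operation noted in Section~2, $M_{\tau}(F^{\prime})\cong(M_{\tau}(F)/\{t_{v}:v\in S\})\setminus\{\text{the other two transitions at each }v\in S\}$, so the minor rank formula yields $r_{M_{\tau}(F)}(\{t_{v}:v\in S\})=r_{M_{\tau}(F)}(\tau(P))-r_{M_{\tau}(F^{\prime})}(\tau(P^{\prime}))$, where $P^{\prime}$ is the circuit partition induced on $F^{\prime}$ by $P$. Now apply the circuit-nullity formula (Theorem~\ref{tranmat}) to $P$ in $F$ and to $P^{\prime}$ in $F^{\prime}$ and substitute; the whole comparison collapses to the combinatorial identity $c(H_{S})=|P|-|P^{\prime}|+c(F^{\prime})$, which holds because a circuit of $P$ disappears under the detachment exactly when it is an isolated vertex of $H_{S}$, and those isolated vertices are precisely the ones absent from the touch-structure of $F^{\prime}$.

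Granting the lemma, part~2 is quick. By property~1, $f|_{\tau(P)}$ is an isomorphism $M_{\tau}(F)|_{\tau(P)}\to M_{\tau}(F^{\prime})|_{\tau(P^{\prime})}$ which, read through the natural identifications above, is exactly the map induced by $g$ on $V(F)=E(Tch(P))$; since matroid duality commutes with isomorphism, $g$ is also an isomorphism $M(Tch(P))\to M(Tch(P^{\prime}))$ (note that no relation between $c(F)$ and $c(F^{\prime})$ is needed, since each appears only inside its own graph's rank computation). The main obstacle is the lemma, and within it the bookkeeping when $F$ has looped vertices: detachment then discards vertex-free arcs, so both $|P^{\prime}|<|P|$ and the component count $c(F^{\prime})$ versus $c(H_{S})$ shift, and one must check that these shifts cancel. (Alternatively the lemma can be obtained by a direct computation that the row space of the $\tau(P)$-submatrix of $M(C)$ is the cycle space of $Tch(P)$; this is essentially the same identity.)
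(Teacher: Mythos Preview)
Your argument is correct, and it takes a genuinely different route from the paper's own proof.

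For part~1 your construction is essentially equivalent to the paper's, though you go directly through Theorem~\ref{adjiso} while the paper invokes the structure theorem (Theorem~\ref{iso}) to reduce to a single connected sum, separation, or balanced mutation, so that $f$ and $g$ become identity maps.

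The real divergence is in part~2. Your key lemma --- that $M_{\tau}(F)|_{\tau(P)}$ is the cocycle matroid of $Tch(P)$ under the natural identification $\tau(P)\leftrightarrow V(F)=E(Tch(P))$ --- is exactly Theorem~\ref{core}, which the paper states \emph{after} Theorem~\ref{cycletouch} and cites from the literature (Jaeger, Bouchet, and \cite{Tnew, BHT}) without proof. You then observe, quite cleanly, that Theorem~\ref{core} renders part~2 immediate: the triple-compatible isomorphism $f$ restricts to an isomorphism of the submatroids on $\tau(P)$ and $\tau(P')$, hence of the dual cycle matroids of the touch-graphs, and this restriction is precisely the map induced by $g$. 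The paper, by contrast, does not argue this way; it performs a hands-on case analysis, tracking how each elementary operation of Theorem~\ref{iso} affects the circuits of $P$ containing the cut edges, and showing that the resulting touch-graphs differ by a vertex identification, a cutpoint split, or a Whitney twist (the balanced-mutation case requires an induction on $|P|$ and a subcase analysis). Your route is shorter and more conceptual; the paper's route is self-contained (it does not appeal to Theorem~\ref{core}) and yields the explicit geometric correspondence between the operations of Theorem~\ref{iso} and Whitney's 2-isomorphism operations on touch-graphs, which is of independent interest.

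Your sketch of the lemma via detachment and the circuit-nullity formula is sound; the identity $c(H_{S})=|P|-|P'|+c(F')$ you isolate holds because $Tch(P')$ is exactly $H_{S}$ with its isolated vertices deleted, and those isolated vertices are precisely the circuits of $P$ supported entirely on $S$. The looped-vertex bookkeeping you flag does cancel as you suspect, since discarding a vertex-free arc decrements $|P'|$ and $c(F')$ simultaneously when and only when the corresponding circuit of $P$ becomes isolated in $H_{S}$.
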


\begin{proof}
By Theorem \ref{iso}, it suffices to consider the possibility that $F^{\prime
}$ is obtained from $F$ by a single connected sum, separation or balanced
mutation. Then $F$ and $F^{\prime}$ have the same vertices and the same
half-edges, though the half-edges displayed in Fig. \ref{tranmf5a} or
\ref{tranmf5b1} (whichever figure is appropriate) are combined into edges in
different ways. As transitions are pairings of half-edges incident at the same
vertex, $F$ and $F^{\prime}$ also have the same transitions. If we use $g$ and
$f$ to denote the identity maps of $V(F)=V(F^{\prime})$ and $\mathfrak{T}%
(F)=\mathfrak{T}(F^{\prime})$, respectively, then these identity maps
certainly satisfy assertion 1.

If $C$ is an Euler system of $F$ then as discussed in the proof of Theorem
\ref{ghier2}, $F^{\prime}$ has an Euler system $C^{\prime}$ such that $W(C)$
and $W(C^{\prime})$ differ only by equivalences and turnarounds. Then the
interlacement graphs $\mathcal{I}(C)$ and $\mathcal{I}(C^{\prime})$ are the
same, so the identity map $f$ defines an isomorphism of transition matroids.

It remains to verify assertion 2. Each circuit partition $P$ of $F$
corresponds to a circuit partition $P^{\prime}$ of $F^{\prime}$, with
$\tau(P^{\prime})=f(\tau(P))$. The edges of $Tch(P)$ and $Tch(P^{\prime})$
correspond to elements of $V(F)=V(F^{\prime})$, so the identity map $g$ is a
bijection between the edge sets of $Tch(P)$ and $Tch(P^{\prime})$. If
$F^{\prime}$ is obtained from $F$ by a\ connected sum of $F_{1}$ and $F_{2}$,
then $P^{\prime}$ is obtained by \textquotedblleft splicing\textquotedblright%
\ two circuits of $P$, one contained in $F_{1}$ and the other contained in
$F_{2}$. Clearly then $Tch(P^{\prime})$ is obtained from $Tch(P)$ by
identifying two vertices from different connected components, so $Tch(P)$ and
$Tch(P^{\prime})$ have the same cycle matroid. If $F^{\prime}$ is obtained
from $F$ by a\ separation, the situation is reversed.

Suppose now that $F^{\prime}$ is obtained from $F$ by a balanced mutation
which is not a composition of connected sums and separations. Then the four
edges connecting $F_{1}$ to $F_{2}$ lie in the same connected component of
$F$. Let $P~$and $P^{\prime}$ be circuit partitions of $F$ and $F^{\prime}$,
respectively, with $\tau(P^{\prime})=f(\tau(P))$. As the only difference
between $F$ and $F^{\prime}$ is that different half-edges constitute\ the four
edges connecting $F_{1}$ to $F_{2}$, the only differences between $P$ and
$P^{\prime}$ involve the circuit(s) containing the four edges that connect
$F_{1}$ to $F_{2}$. In particular, the circuits of $P$ that are contained in
$F_{1}$ or $F_{2}$ are the same as the circuits of $P^{\prime}$ that are
contained in $F_{1}$ or $F_{2}$.

We claim that there is a bijection $f_{P}:P\rightarrow P^{\prime}$ that is the
identity map for circuits contained in $F_{1}$ or $F_{2}$. If the claim is
false then we may suppose without loss of generality that the four edges
connecting $F_{1}$ to $F_{2}$ appear in one circuit of $P$, but in two
distinct circuits of $P^{\prime}$. We use induction on $\left\vert
P\right\vert $ to prove that this is impossible. If $\left\vert P\right\vert
=c(F)$ then $P$ is an Euler system of $F$. As illustrated in Fig.
\ref{tranmf18}, it follows that $P^{\prime}$ is an Euler system of $F^{\prime
}$, and all four edges connecting $F_{1}$ to $F_{2}$ in $F^{\prime}$ lie in
one connected component; but then they all appear in one circuit of
$P^{\prime}$, a contradiction. Proceeding inductively, suppose $\left\vert
P\right\vert >c(F)$ and the claim holds for all circuit partitions of size
smaller than $\left\vert P\right\vert $. As the four edges connecting $F_{1}$
to $F_{2}$ lie on a single circuit of $P$, $\left\vert P\right\vert >c(F)$
implies that there is a circuit $\gamma_{1}\in P$ that is contained in either
$F_{1}$ or $F_{2}$, and is not an Euler circuit of a connected component of
$F$. Consequently there is a vertex $v$ where $\gamma_{1}$ and some other
circuit $\gamma_{2}\in P$ are both incident. As noted earlier, $P$ and
$P^{\prime}$ include the same circuits contained in $F_{1}$ or $F_{2}$, so
$\gamma_{1}\in P^{\prime}$ and $P^{\prime}$ also includes a circuit
$\gamma_{2}^{\prime}\neq\gamma_{1}$ that is incident at $v$. Let $\overline
{P}$ be a circuit partition of $F$ obtained by changing the transition of $P$
at $v$, and let $\overline{P^{\prime}}$ be the corresponding circuit partition
of $F^{\prime}$. As indicated in Fig. \ref{tranmf10}, $\left\vert \overline
{P}\right\vert =\left\vert P\right\vert -1$; consequently the inductive
hypothesis tells us that all four edges connecting $F_{1}$ to $F_{2}$ in
$F^{\prime}$ lie on a single circuit $\overline{\gamma}$ of $\overline
{P^{\prime}}$. By hypothesis $P^{\prime}$ does not contain any such circuit,
so $\overline{\gamma}\in\overline{P^{\prime}}-P^{\prime}$, i.e.,
$\overline{\gamma}$ is the circuit obtained by uniting $\gamma_{1}$ and
$\gamma_{2}^{\prime}$. As $\gamma_{1}$ is contained in $F_{1}$ or $F_{2}$,
$\gamma_{2}^{\prime}$ must contain all four edges connecting $F_{1}$ to
$F_{2}$ in $F^{\prime}$. But again, by hypothesis $P^{\prime}$ does not
contain any such circuit. This contradiction establishes the claim.%
\begin{figure}
[ptb]
\begin{center}
\includegraphics[
trim=1.470796in 8.982771in 0.672947in 1.274449in,
height=0.5889in,
width=4.7928in
]%
{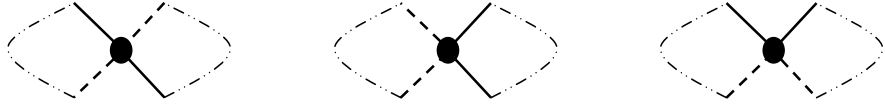}%
\caption{Three circuit partitions that differ at only one vertex.}%
\label{tranmf10}%
\end{center}
\end{figure}

As $P=V(Tch(P))$ and $P^{\prime}=V(Tch(P^{\prime}))$, the claim tells us that
$f_{P}$ is a bijection between the vertex sets of $Tch(P)$ and $Tch(P^{\prime
})$. The relationship between the two touch-graphs falls into one of the
following cases. Case 1: The four edges connecting $F_{1}$ to $F_{2}$ in $F$
all appear in one circuit of $P$. In this case the four edges connecting
$F_{1}$ to $F_{2}$ in $F^{\prime}$ all appear in one circuit of $P^{\prime}$,
and $f_{P}$ defines an isomorphism $Tch(P)\cong Tch(P^{\prime})$. Case 2: The
four edges connecting $F_{1}$ to $F_{2}$ in $F$ appear in two different
circuits $\gamma_{1}\neq\gamma_{2}\in P$, and the four edges connecting
$F_{1}$ to $F_{2}$ in $F^{\prime}$ appear in two different circuits
$f_{P}(\gamma_{1})\neq f_{P}(\gamma_{2})\in P^{\prime}$. In this case
$f_{P}(\gamma_{1})$ and $f_{P}(\gamma_{2})$ are the circuits of $F^{\prime}$
obtained by \textquotedblleft splicing\textquotedblright\ the portions of
$\gamma_{1}$ and $\gamma_{2}$ within $F_{1}$ and $F_{2}$, using the four edges
of $F^{\prime}$ that connect $F_{1}$ and $F_{2}$. Interchanging $f_{P}%
(\gamma_{1})$ and $f_{P}(\gamma_{2})$ if necessary, we may presume that
$f_{P}$ has been defined in such a way that $f_{P}(\gamma_{1})$ contains the
$F_{1}$ portion of $\gamma_{1}$.\ Subcase 2a: The two portions of $\gamma_{1}%
$\ are spliced to form $f_{P}(\gamma_{1})$ and the two portions of $\gamma
_{2}$\ are spliced to form $f_{P}(\gamma_{2})$. In this case, $f_{P}$ defines
an isomorphism $Tch(P)\cong Tch(P^{\prime})$. Subcase 2b: The $F_{1}$ portion
of $\gamma_{1}$ and the $F_{2}$ portion of $\gamma_{2}$ are spliced to form
$f_{P}(\gamma_{1})$, and $f_{P}(\gamma_{2})$ is formed by splicing the $F_{1}$
portion of $\gamma_{2}$ and the $F_{2}$ portion of $\gamma_{1}$. In this case
the bijection $f_{P}:V(Tch(P))\rightarrow V(Tch(P^{\prime}))$ preserves all
adjacencies, except that each edge in $Tch(P)$ between $\gamma_{1}$
(respectively, $\gamma_{2}$) and a circuit $\gamma$ contained in $F_{2}$ is
replaced in $Tch(P^{\prime})$ with an edge between $f_{P}(\gamma_{2})$
(respectively, $f_{P}(\gamma_{1})$) and $\gamma$. If no circuit of $P$ is
contained in $F_{2}$ then this edge-swapping is immaterial and $f_{P}$ defines
an isomorphism $Tch(P)\cong Tch(P^{\prime})$. If no circuit of $P$ is
contained in $F_{1}$ then this edge-swapping tells us that an isomorphism
$Tch(P)\cong Tch(P^{\prime})$ is given by the bijection obtained from $f_{P}$
by interchanging the values of $f_{P}(\gamma_{1})$ and $f_{P}(\gamma_{2})$. If
each of $F_{1}$, $F_{2}$ contains a circuit of $P$ then $\{\gamma_{1}%
,\gamma_{2}\}$ is a vertex cut of $Tch(P)$, which separates the circuit(s)
contained in $F_{1}$ from the circuit(s) contained in $F_{2}$. The
edge-swapping is then a Whitney twist with respect to $\{\gamma_{1},\gamma
_{2}\}$.

In every case $Tch(P)$ and $Tch(P^{\prime})$ have isomorphic cycle matroids,
and the matroid isomorphisms respect the identification of touch-graph edges
with elements of $V(F)=V(F^{\prime})$. Consequently assertion 2 holds.
\end{proof}

Examples of subcases 2a and 2b appear in Figs. \ref{tranmf17a} and
\ref{tranmf17b}, respectively. Transitions are indicated in the figures by
plain, dashed and bold line styles; when a circuit traverses a vertex the
style is maintained, but sometimes it is necessary to change the style in the
middle of an edge. To indicate the correspondence between circuits and
touch-graph vertices, some circuits and vertices are numbered; also, every
circuit with dashed transitions is represented by a dashed vertex.%
\begin{figure}
[p]
\begin{center}
\includegraphics[
trim=1.203997in 5.947426in 0.675496in 1.004811in,
height=3.071in,
width=4.9908in
]%
{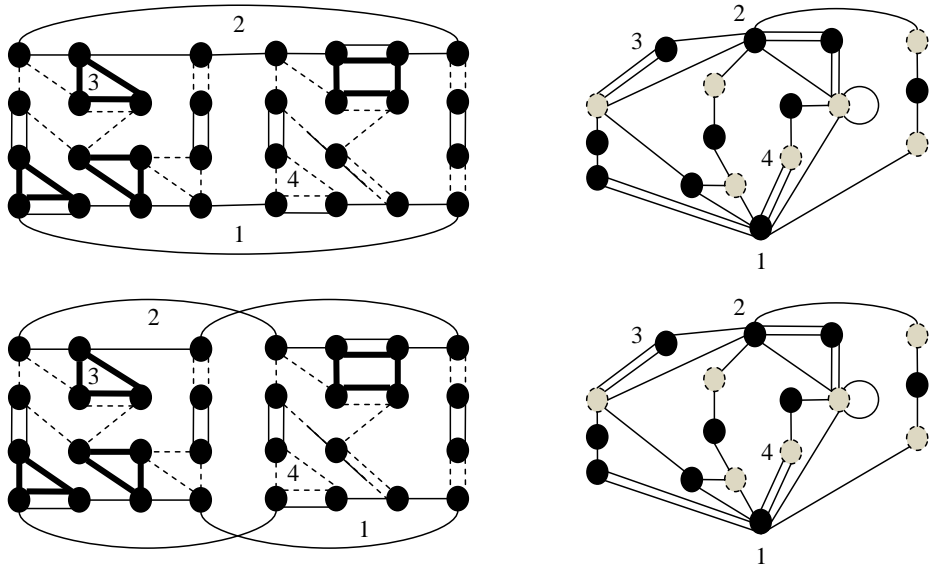}%
\caption{Corresponding circuit partitions in balanced mutations, whose
touch-graphs are the same.}%
\label{tranmf17a}%
\end{center}
\end{figure}
%

\begin{figure}
[p]
\begin{center}
\includegraphics[
trim=1.203997in 5.884694in 0.536998in 1.074146in,
height=3.0623in,
width=5.0946in
]%
{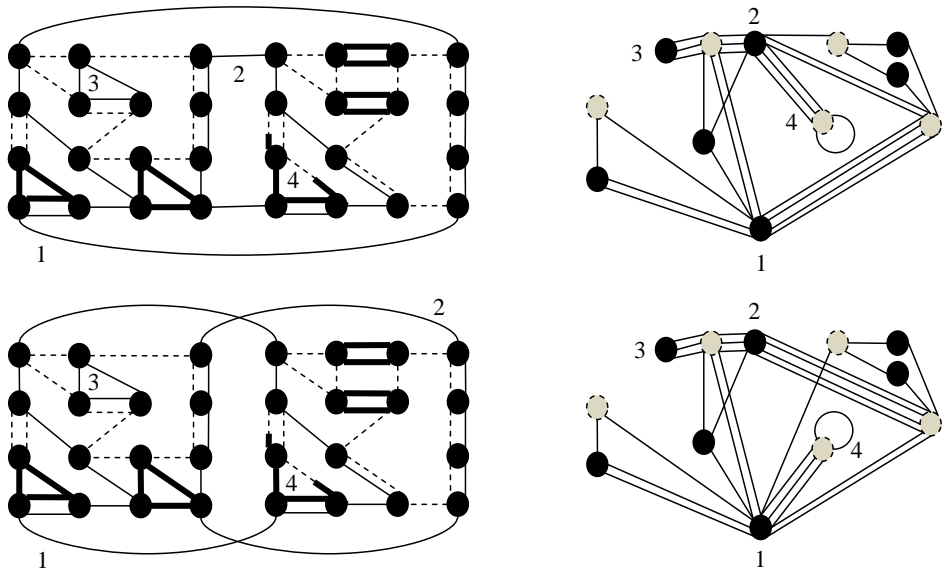}%
\caption{Corresponding circuit partitions in balanced mutations, whose
touch-graphs are related by a Whitney twist.}%
\label{tranmf17b}%
\end{center}
\end{figure}

If $F$ is a 4-regular graph, then Theorem \ref{cycletouch} tells us indirectly
that $M_{\tau}(F)$ determines the cycle matroids of all the touch-graphs of
circuit partitions in $F$. There is a more direct way to see this property,
using matroid duality. Here are three ways to describe the dual $M^{\ast}$ of
a binary matroid $M$ on a set $S$:

\begin{enumerate}
\item The bases of $M^{\ast}$ are the complements of the bases of $M$.

\item The rank functions $r$ of $M$ and $r^{\ast}$ of $M^{\ast}$ are related
by the fact that for every $A\subseteq S$, the nullity of $A$ in $M^{\ast}$
equals the corank of $S-A$ in $M$:%
\[
\left\vert A\right\vert -r^{\ast}(A)=r(S)-r(S-A).
\]

\item The subspace of $GF(2)^{S}$ spanned by the rows of a matrix representing
$M$ is the orthogonal complement of the subspace spanned by the rows of a
matrix representing $M^{\ast}$.
\end{enumerate}

Let $P$ be a circuit partition of $F$, and recall that $\tau(P)$ is the subset
of $\mathfrak{T}(F)$ consisting of the transitions involved in $P$; let
$M_{\tau}(P)$ denote the matroid obtained by restricting $M_{\tau}(F)$ to
$\tau(P)$. Notice that both $M_{\tau}(P)$ and the cycle matroid of $Tch(P)$
have elements corresponding to the vertices of $F$. It turns out that these
matroids are closely connected:

\begin{theorem}
\label{core}The cycle matroid of the graph $Tch(P)$ and the dual of $M_{\tau
}(P)$ define the same matroid on $V(F)$.
\end{theorem}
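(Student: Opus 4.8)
The plan is to work with an Euler system $C$ of $F$ chosen so that $\tau(P)$ has a convenient description, and then to compute both matroids as $GF(2)$-row spaces (or column spaces) of explicit matrices. By Kotzig's theorem I am free to pick $C$; I will choose $C$ so that the transition $P$ uses at each vertex $v$ is one of the labeled transitions with respect to $C$. In fact it is cleanest to split the vertices of $F$ into those where $P$ uses $\phi_C(v)$, those where it uses $\chi_C(v)$, and those where it uses $\psi_C(v)$; since the matrix $M(C) = (I \mid \mathcal{A}(\mathcal{I}(C)) \mid I + \mathcal{A}(\mathcal{I}(C)))$ has its columns indexed so that the $\phi$-column of $v$ is the $v$th standard basis vector, the $\chi$-column is the $v$th column of the adjacency matrix, and the $\psi$-column is their sum, the submatrix of $M(C)$ on the columns $\tau(P)$ is a square $n \times n$ matrix whose $v$th column is $e_v$, the $v$th column of $\mathcal{A}(\mathcal{I}(C))$, or $e_v + (\text{that column})$, according to the three cases. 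Call this matrix $N_P$. Then $M_\tau(P)$ is the binary matroid on $V(F)$ represented by $N_P$, and $M_\tau(P)^*$ is represented by any matrix whose row space is the orthogonal complement of the row space of $N_P$.

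Next I would identify the cycle matroid of $Tch(P)$ concretely. Recall $Tch(P)$ has a vertex for each circuit of $P$ and an edge for each vertex of $F$, with the edge for $v$ joining the circuit(s) of $P$ through $v$. Its cycle matroid is represented over $GF(2)$ by the vertex-edge incidence matrix of $Tch(P)$: rows indexed by circuits of $P$, columns indexed by $V(F)$, with the column for $v$ equal to the sum of the indicator vectors of the circuits through $v$ (so it is $e_{\gamma_1} + e_{\gamma_2}$ if $v$ is a crossing of two distinct circuits, and $0$ if both strands at $v$ belong to the same circuit). The rank of this incidence matrix is $|P| - c(F)$, which by the circuit-nullity formula equals $n - r(\tau(P)) = r^*(\tau(P))$ — a good consistency check. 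So the claim reduces to: the row space of the $Tch(P)$-incidence matrix (inside $GF(2)^{V(F)}$) is exactly the orthogonal complement of the row space of $N_P$ inside $GF(2)^{V(F)}$.

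The cleanest way to finish is to exhibit a basis of each space and check orthogonality plus a dimension count. A spanning set for the row space of the incidence matrix of $Tch(P)$ is given by, for each circuit $\gamma$ of $P$, the vector $\mathbf{1}_\gamma \in GF(2)^{V(F)}$ whose $v$-coordinate is $1$ iff $\gamma$ passes through $v$. I would verify directly that each such $\mathbf{1}_\gamma$ is orthogonal to every row of $N_P$: a row of $N_P$ corresponds to a vertex $w$ of $F$, and its nonzero entries are at $w$ itself (in the $\phi$- and $\psi$-cases) and at the neighbors of $w$ in $\mathcal{I}(C)$ (in the $\chi$- and $\psi$-cases). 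So the inner product $\langle \mathbf{1}_\gamma, \text{row}_w(N_P)\rangle$ counts, mod $2$, how many vertices among $\{w\} \cup N_{\mathcal{I}(C)}(w)$ (with the appropriate case-dependent multiplicities) lie on $\gamma$; I claim this is always even. The natural tool here is to interpret this quantity as the number of times the circuit $\gamma$, traced inside $F$, "interacts" with vertex $w$ in a way controlled by the transitions — essentially a parity-of-crossings argument, which is exactly the kind of local computation underlying every known proof of the circuit-nullity formula. Once orthogonality is established, I note that $\dim(\text{row space of }Tch(P)\text{-incidence}) = |P| - c(F) = n - r(\tau(P)) = n - \dim(\text{row space of }N_P) = \dim(\text{orthogonal complement})$, so the two subspaces coincide and the two matroids on $V(F)$ are equal.

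The main obstacle I anticipate is the parity/orthogonality computation $\langle \mathbf{1}_\gamma, \text{row}_w(N_P)\rangle = 0$, because it requires carefully unpacking how membership of a vertex $u$ in the circuit $\gamma$ of $P$ relates to interlacement data of $\mathcal{I}(C)$ and to which labeled transition $P$ uses at $u$. This is genuinely the combinatorial heart of the statement — it is a disguised reappearance of the circuit-nullity formula at the level of individual circuits rather than in aggregate — and I would either adapt the bookkeeping from the proof of that formula (e.g. the account in \cite{Tnew}) or, perhaps more transparently, argue via the line-graph description of $M_\tau(F)$ given at the end of Section 2, under which the orthogonality becomes the standard duality between cycle and cocycle spaces of the auxiliary graph $L(F)$.
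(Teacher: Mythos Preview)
Your approach is essentially the one the paper adopts: the paper restates Theorem~\ref{core} in the algebraic form of Theorem~\ref{core2} --- that the row space of the submatrix $M(C,P)$ (your $N_P$) and the span of the elementary cocycles of $Tch(P)$ (your vectors $\mathbf{1}_\gamma$) are orthogonal complements in $GF(2)^{V(F)}$ --- and then, like you, defers the orthogonality computation to the literature (Jaeger, Bouchet, and \cite{Tnew}). One small wording fix: your $\mathbf{1}_\gamma$ should have $v$-coordinate equal to the number of passes of $\gamma$ through $v$ taken mod $2$ (so $0$ when $v$ is a loop of $Tch(P)$), as you correctly set up in the column description but slightly misstated afterward.
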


We do not provide a proof of Theorem \ref{core} because several equivalent
results have already appeared in the literature. An algebraic form of Theorem
\ref{core} is this:

\begin{theorem}
\label{core2}For any Euler system $C$ of $F$, let $M(C,P)$ be the submatrix of
$M(C)$ consisting of columns corresponding to elements of $\tau(P)$. Then the
orthogonal complement of the row space of $M(C,P)$ is the subspace of
$GF(2)^{V(F)}$ spanned by the elementary cocycles of $Tch(P)$.
\end{theorem}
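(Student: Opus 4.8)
The plan is to view Theorem \ref{core2} as the linear-algebra incarnation of Theorem \ref{core} and to deduce it from that theorem using the standard dictionary between a binary matroid and its cocycle space. Recall three facts. First, for a graph $G$ the cycle matroid $M(G)$ is represented over $GF(2)$ by the vertex--edge incidence matrix of $G$, whose row space is exactly the cut (cocycle) space of $G$, that is, the span of the elementary cocycles of $G$. Second, by the third description of the dual recalled above, if $M_{\tau}(P)$ is represented by $M(C,P)$ then its dual $M_{\tau}(P)^{\ast}$ is represented by any matrix whose row space is the orthogonal complement of the row space of $M(C,P)$. Third, a binary matroid determines the row space of every matrix representing it, since that row space is the orthogonal complement of the span of the matroid's circuits (its cycle space), and the latter is intrinsic to the matroid. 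Combining these with $G=Tch(P)$, we see that the assertion of Theorem \ref{core} --- that the cycle matroid of $Tch(P)$ and $M_{\tau}(P)^{\ast}$ define the same matroid on $V(F)$ --- says precisely that the span of the elementary cocycles of $Tch(P)$ coincides with the orthogonal complement of the row space of $M(C,P)$. So Theorem \ref{core2} follows from Theorem \ref{core} and the references cited there.

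For a proof that does not pass through Theorem \ref{core}, I would first reduce to a single inclusion by a dimension count. The circuit-nullity formula of Theorem \ref{tranmat} gives $M(C,P)$ rank $r(\tau(P))=n+c(F)-\left\vert P\right\vert$, so the orthogonal complement of its row space has dimension $\left\vert P\right\vert-c(F)$. On the other hand $Tch(P)$ has $\left\vert P\right\vert$ vertices, $n$ edges, and exactly $c(F)$ connected components: the circuits of $P$ lying inside one connected component $F_{0}$ of $F$ are linked to one another in $Tch(P)$ through the vertices of $F_{0}$ (a path of edges of $F_{0}$ from an edge of one such circuit to an edge of another induces a walk between the corresponding vertices of $Tch(P)$), while circuits in different components of $F$ share no vertex. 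Hence the span of the elementary cocycles of $Tch(P)$, being the cut space of a graph with $\left\vert P\right\vert$ vertices and $c(F)$ components, also has dimension $\left\vert P\right\vert-c(F)$. It therefore suffices to prove the inclusion $\subseteq$: each elementary cocycle $\delta(\gamma)$ of $Tch(P)$, with $\gamma$ a circuit of $P$, is orthogonal to every row of $M(C,P)$; equivalently $M(C,P)\,\delta(\gamma)=0$, i.e.\ the columns of $M(C,P)$ indexed by the vertices of $F$ through which $\gamma$ passes exactly once sum to $0$ over $GF(2)$.

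To prove that inclusion I would induct on $\left\vert P\right\vert$, using that the row space of $M(C,P)$ --- hence its kernel --- is a matroid invariant (Proposition \ref{inv}) and so independent of the Euler system $C$, which may thus be chosen conveniently at each step. When $\left\vert P\right\vert=c(F)$, $P$ is an Euler system, $Tch(P)$ is a disjoint union of single vertices carrying only loops, every elementary cocycle vanishes, and (taking $C=P$, so $M(C,P)=I$) there is nothing to prove. When $\left\vert P\right\vert>c(F)$ the component count just made forces a vertex $v$ at which two distinct circuits $\gamma,\gamma'$ of $P$ meet; switching the transition at $v$ to either of the other two transitions there merges $\gamma$ and $\gamma'$ into one circuit, producing $\overline{P}$ with $\left\vert\overline{P}\right\vert=\left\vert P\right\vert-1$, with $Tch(\overline{P})$ obtained from $Tch(P)$ by identifying the vertices $\gamma$ and $\gamma'$ (so that edge $v$ becomes a loop), and with $M(C,\overline{P})$ obtained from $M(C,P)$ by replacing the column at $v$ by another of the three columns of $M(C)$ at $v$, which sum to $0$. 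Invoking the inductive hypothesis for $\overline{P}$ (and for the partition obtained by the other switch at $v$, which yields the same touch-graph) and tracking how these rank-one column changes move the kernel of $M(C,\cdot)$ against the way identifying the two endpoints of an edge moves the cut space of a touch-graph, one recovers $\delta(\gamma)\in\ker M(C,P)$. The main obstacle is exactly this last verification: matching, case by case, the effect of a single-vertex transition change on $\ker M(C,P)$ with its effect on the cut space of $Tch(P)$ --- including the degenerate possibilities in which $v$ is a loop or cut vertex of $F$ (so that one of the three transitions at $v$ is a loop of $M_{\tau}(F)$), or in which the two circuits at $v$ behave as parallel edges of the touch-graph --- and it is precisely this bookkeeping that the references cited for Theorem \ref{core} supply.
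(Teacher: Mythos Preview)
Your first paragraph is correct and matches the paper's treatment exactly: the paper states Theorem~\ref{core2} as ``an algebraic form of Theorem~\ref{core}'' --- i.e., the equivalence you spell out via the three descriptions of the dual and the fact that the cocycle space of a graph is the row space of its incidence matrix --- and then, rather than proving either theorem, defers both to the literature (Jaeger for the case where $P$ avoids all $\phi_C(v)$, Bouchet for the all-$\chi_C$ case, and \cite{Tnew} for the general case). So your deduction of Theorem~\ref{core2} from Theorem~\ref{core} is precisely the content the paper leaves implicit in calling one the algebraic form of the other.

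Your second and third paragraphs go beyond what the paper does: the dimension count reducing to a single inclusion is clean and correct, and the inductive scheme on $|P|$ via single-vertex transition switches is a reasonable outline of how the cited references actually proceed. You are right to flag the final bookkeeping step --- matching the effect of a transition change on $\ker M(C,P)$ with the effect of vertex identification on the cut space of $Tch(P)$ --- as the real work, and honest in noting that this is exactly what the references supply. One small point: in the inductive step you invoke Proposition~\ref{inv} to change $C$ freely, but once $|P|>c(F)$ you cannot in general arrange $C=P$; what you can do (and what the references exploit) is choose $C$ so that $\tau(P)$ avoids $\phi_C$ at the vertex $v$ under consideration, which is enough to make the column replacement transparent. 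With that caveat, your sketch is sound.
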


Here the elementary cocycle corresponding to a vertex of a graph $G$ is the
subset of $E(G)$ consisting of the non-loop edges incident at that vertex.
Jaeger \cite{J1} verified Theorem \ref{core2} in case $P$ does not involve the
$\phi_{C}(v)$ transition at any vertex, and Bouchet \cite{Bu} gave a different
argument for the particular case in which $P$ involves only $\chi_{C}(v)$
transitions. The general case of Theorem \ref{core2} is proven in \cite{Tnew}.
A detailed discussion of Theorem \ref{core} is given in \cite{BHT}; like
Jaeger's original version the argument of \cite{BHT} requires that $P$ not
involve the $\phi_{C}(v)$ transition at any vertex, but Proposition \ref{inv}
renders the requirement moot. Yet another equivalent version of Theorem
\ref{core} appears in Bouchet's work on isotropic systems; see Section 6 of
\cite{Bi1}.

Theorem \ref{core} tells us that the duals of the cycle matroids of the
touch-graphs of all the\ different circuit partitions of $F$ are interwoven in
the single matroid $M_{\tau}(F)$. This property might well remind the reader
of Bouchet's isotropic systems \cite{Bi1, Bi2} or the cycle family graphs of
Ellis-Monaghan and Moffatt \cite{EMM, EMM1, EMM2}; indeed, transition matroids
are implicit in both these sets of ideas.

Theorem \ref{core} also provides a new proof of a famous formula introduced by
Martin \cite{Ma} and explained and extended by Las Vergnas \cite{L2}, which
ties the Martin polynomial of a 4-regular plane graph to the diagonal Tutte
polynomial of either of its associated \textquotedblleft
checkerboard\textquotedblright\ graphs. The formula is given in Corollary
\ref{Martin2} below.

\begin{theorem}
\label{Martin1} Suppose $F$ is a 4-regular graph with $n$ vertices. Let
$P_{1}$ and $P_{2}$ be circuit partitions in $F$, and for $v\in V(F)$ let
$\tau_{1}(v),\tau_{2}(v)$ be the transitions involved in $P_{1}$ and $P_{2}$,
respectively. Suppose $\tau_{1}(v)\neq\tau_{2}(v)$ $\forall v\in V(F)$. Then
if $B_{1}$ is any basis of $M_{\tau}(P_{1})$,
\[
B_{1}\cup\{\tau_{2}(v)\mid\tau_{1}(v)\notin B_{1}\}
\]
is a basis of $M_{\tau}(F)$. It follows that
\[
r(\tau(P_{1})\cup\tau(P_{2}))=n\leq r(\tau(P_{1}))+r(\tau(P_{2})).
\]

\end{theorem}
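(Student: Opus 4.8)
The plan is to work with a concrete matrix representation and to reduce the claim about $M_\tau(P_1)$ to a statement about the interlacement-adjacency submatrix indexed by the transitions of $P_1$ and $P_2$. First I would choose an Euler system $C$ of $F$; for convenience I would like $C$ to be chosen so that $P_1$ uses exactly the $\chi_C$-transitions, i.e., $\tau_1(v)=\chi_C(v)$ for every $v$. This is legitimate because $P_1$ is a circuit partition and any circuit partition can be taken to be the $\chi$-family with respect to a suitable Euler system (one may also invoke Proposition~\ref{inv} to argue that the choice of $C$ does not affect the matroid $M_\tau(F)$). With this choice, the columns of $M(C)$ corresponding to $\tau(P_1)$ form the matrix $\mathcal{A}(\mathcal{I}(C))$. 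Since $\tau_2(v)\neq\tau_1(v)=\chi_C(v)$ for every $v$, each $\tau_2(v)$ is either $\phi_C(v)$ or $\psi_C(v)$; write $S$ for the set of vertices with $\tau_2(v)=\psi_C(v)$ and $\overline S=V(F)\setminus S$ for those with $\tau_2(v)=\phi_C(v)$. Then the $\tau(P_2)$-columns of $M(C)$ are: the standard basis vector $e_v$ for $v\in\overline S$, and the vector $e_v+\mathcal{A}_v$ (the $v$-th column of $I+\mathcal{A}$) for $v\in S$, where $\mathcal{A}=\mathcal{A}(\mathcal{I}(C))$.

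Next I would fix a basis $B_1$ of $M_\tau(P_1)$. Under the identification above, $B_1$ corresponds to a subset $T\subseteq V(F)$ such that the columns $\{\mathcal{A}_v : v\in T\}$ form a basis of the column space of $\mathcal{A}$; in particular $|T|=r(\tau(P_1))=\mathrm{rank}(\mathcal{A})$. The set $B_1\cup\{\tau_2(v) : \tau_1(v)\notin B_1\}$ then corresponds to the column set consisting of $\mathcal{A}_v$ for $v\in T$, together with one $\tau_2$-column for each $v\notin T$. I want to show these $n$ columns are linearly independent over $GF(2)$, hence a basis of $M_\tau(F)$ (which has rank $n$). The heart of the argument is a column-reduction: using the $\tau_2$-columns $e_v$ (for $v\in\overline S\setminus T$) and $e_v+\mathcal{A}_v$ (for $v\in S\setminus T$), I can clear entries and essentially ``pivot'' to recover, from the $n$ chosen columns, a full set of $n$ independent vectors. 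Concretely, for $v\in\overline S\setminus T$ the column $e_v$ already supplies the $v$-th coordinate direction; for $v\in S\setminus T$, the column $e_v+\mathcal{A}_v$ together with the already-available span of $\{\mathcal{A}_w : w\in T\}$ — after noting $\mathcal{A}_v$ lies in that span, since $T$ indexes a column basis of $\mathcal{A}$ — shows $e_v$ is in the span as well. Thus the span of all $n$ chosen columns contains every $e_v$ for $v\notin T$, and it contains every $\mathcal{A}_v$ for $v\in T$; a dimension count (or a direct check that these together span $GF(2)^{V(F)}$, using $|T|=\mathrm{rank}\,\mathcal{A}$ and the complementary coordinate directions) gives independence. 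This yields the first displayed conclusion.

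For the second display, the inequality $r(\tau(P_1)\cup\tau(P_2))=n$ is immediate once we know $B_1\cup\{\tau_2(v):\tau_1(v)\notin B_1\}$ is a basis of $M_\tau(F)$, since this set is contained in $\tau(P_1)\cup\tau(P_2)$ and $M_\tau(F)$ has rank $n$. The inequality $n\le r(\tau(P_1))+r(\tau(P_2))$ then follows from submodularity of the matroid rank function: $r(\tau(P_1)\cup\tau(P_2))\le r(\tau(P_1))+r(\tau(P_2))-r(\tau(P_1)\cap\tau(P_2))\le r(\tau(P_1))+r(\tau(P_2))$, using $\tau(P_1)\cap\tau(P_2)=\varnothing$ (which holds because $\tau_1(v)\neq\tau_2(v)$ at every vertex).

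I expect the main obstacle to be the column-reduction step verifying linear independence — in particular, being careful about the two types of $\tau_2$-columns ($\phi$ versus $\psi$) and confirming that pivoting on the $e_v+\mathcal{A}_v$ columns interacts correctly with the span of $\{\mathcal{A}_w:w\in T\}$, rather than introducing a dependency. A clean way to organize this is to pass to the $n\times n$ square matrix whose columns are the $n$ chosen vectors and to exhibit it, after $GF(2)$ column operations, as block-triangular with identity diagonal blocks; but one must check the off-diagonal blocks do not obstruct this, which is where the hypothesis that $T$ indexes a column basis of $\mathcal{A}$ is used essentially.
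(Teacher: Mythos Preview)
Your linear-algebra approach is different from the paper's (which argues inductively via the circuit-nullity formula, swapping one transition at a time), and it can be made to work, but as written it contains a genuine error. The claim that ``any circuit partition can be taken to be the $\chi$-family with respect to a suitable Euler system'' is false. Take $F$ to be the one-vertex graph with two loops and let $P_{1}$ be one of its two Euler circuits. Orienting $P_{1}$ gives a balanced orientation, and the unique other direction-consistent transition at the vertex is the one producing the two-circuit partition, not the other Euler circuit. Since there is only one Euler circuit $C\neq P_{1}$, and $P_{1}$'s transition is orientation-\emph{in}consistent with that $C$, we get $\tau_{1}(v)=\psi_{C}(v)$, not $\chi_{C}(v)$. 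So no Euler system has $P_{1}$ as its $\chi$-family. Invoking Proposition~\ref{inv} does not help: you genuinely need a particular $C$ for the column description you wrote down.

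The fix is to weaken the normalization: choose $C$ so that merely $\tau_{1}(v)\neq\phi_{C}(v)$ for every $v$ (Kotzig's observation, recalled in the proof of Proposition~\ref{tranrib}, guarantees such a $C$ exists). Then the $\tau(P_{1})$-columns of $M(C)$ are the columns of the \emph{symmetric} matrix $A'=\mathcal{A}(\mathcal{I}(C))+D$ for a suitable diagonal $D$, and a short case check shows each $\tau_{2}(v)$-column is either $e_{v}$ or $e_{v}+A'_{v}$. Your pivoting argument then goes through verbatim with $A'$ in place of $\mathcal{A}$. One point you gloss over is the last step: knowing that the span contains $\{e_{v}:v\notin T\}\cup\{A'_{v}:v\in T\}$ does not by itself give full rank; you need $A'[T,T]$ nonsingular. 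This follows from symmetry of $A'$: since the row space equals the column space, the rows indexed by $T$ also span the row space, so every row of $A'[V,T]$ is a combination of the rows in $A'[T,T]$, whence $\operatorname{rank}A'[T,T]=\operatorname{rank}A'[V,T]=|T|$. You should state and use this lemma explicitly.

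By comparison, the paper's proof avoids choosing any Euler system or matrix representation: it lists $\{v:\tau_{1}(v)\notin B_{1}\}=\{v_{1},\dots,v_{p}\}$, lets $P'_{i}$ be the partition obtained from $P_{1}$ by switching to $\tau_{2}$ at $v_{1},\dots,v_{i}$, and shows $|P'_{i}|=|P_{1}|-i$ by a one-vertex argument (if the switch did not decrease the circuit count, adjoining $\tau_{1}(v_{i})$ back would raise the rank above $r(\tau(P_{1}))$, contradicting that $B_{1}$ spans $\tau(P_{1})$). This is shorter and uses nothing beyond the circuit-nullity formula; your route trades that induction for a clean matrix computation, at the cost of needing the Euler-system normalization and the symmetric-matrix lemma above.
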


\begin{proof}
Let $\{v_{1},...,v_{p}\}=\{v\in V(F)\mid\tau_{1}(v)\notin B_{1}\}$. For $0\leq
i\leq p$ let $P_{i}^{\prime}$ be the circuit partition of $F$ such that%
\[
\tau(P_{i}^{\prime})=\{\tau_{1}(v)\mid v\notin\{v_{1},...,v_{i}\}\}\cup
\{\tau_{2}(v_{1}),...,\tau_{2}(v_{i})\}\text{.}%
\]

We claim that $\left\vert P_{i}^{\prime}\right\vert =\left\vert P_{1}%
\right\vert -i$ for each $i$; as $P_{0}^{\prime}=P_{1}$, the claim is true for
$i=0$. Suppose $i\geq1$ and the claim holds for $P_{0}^{\prime},...,P_{i-1}%
^{\prime}$. The three circuit partitions that involve the same transitions as
$P_{i-1}^{\prime}$ at all vertices other than $v_{i}$ are pictured in Fig.
\ref{tranmf10}. Suppose $P_{i-1}^{\prime}$ is not the partition pictured in
the middle. Let $\tau(v_{i})$ be the transition of the circuit partition
pictured in the middle, and let $P^{\prime\prime}$ be the circuit partition
with $\tau(P^{\prime\prime})=(\tau(P_{i-1}^{\prime})-\{\tau_{1}(v_{i}%
)\})\cup\{\tau(v_{i})\}$. Then $\left\vert P^{\prime\prime}\right\vert
=\left\vert P_{i-1}^{\prime}\right\vert +1$, so the circuit-nullity formula
tells us that $r(\tau(P^{\prime\prime}))=r(\tau(P_{i-1}^{\prime}))-1$.
Consequently, adjoining $\tau_{1}(v_{i})$ to $\tau(P^{\prime\prime})$ raises
the rank. This is impossible, though, because $B_{1}\subseteq\tau
(P^{\prime\prime})$ and $B_{1}$ spans $\tau(P_{1})$. We conclude that
$P_{i-1}^{\prime}$ is the partition pictured in the middle of Fig.
\ref{tranmf10}, so $P_{i}^{\prime}$ is not; hence $\left\vert P_{i}^{\prime
}\right\vert =\left\vert P_{i-1}^{\prime}\right\vert -1$, and the claim holds
for $P_{i}^{\prime}$.

The circuit-nullity formula now tells us that $r(\tau(P_{i}^{\prime
}))=i+r(\tau(P_{1}))$ for each $i$. In particular, $r(\tau(P_{p}^{\prime
}))=p+r(\tau(P_{1}))=n-\left\vert B_{1}\right\vert +r(\tau(P_{1}))$. As
$B_{1}$ is a basis of $M_{\tau}(P_{1})$, $\left\vert B_{1}\right\vert
=r(\tau(P_{1}))$ and hence $r(\tau(P_{p}^{\prime}))=n$. As $\tau(P_{p}%
^{\prime})\subseteq\tau(P_{1})\cup\tau(P_{2})$, we conclude that $r(\tau
(P_{1})\cup\tau(P_{2}))=n$.

The proof is completed by recalling that $r(A_{1}\cup A_{2})\leq
r(A_{1})+r(A_{2})$ for any two subsets of any matroid.
\end{proof}

If a subset $A\subseteq V(F)$ has the property that $\tau_{2}(A)=\{\tau
_{2}(v)\mid v\in A\}$ is a circuit in $M_{\tau}(P_{2})$, then the
contrapositive of Theorem \ref{Martin1} implies that $\{\tau_{1}(v)\mid
v\notin A\}$ does not contain any basis of $M_{\tau}(P_{1})$. Consequently
$\tau_{1}(A)=\{\tau_{1}(v)\mid v\in A\}$ is not contained in any basis of
$M_{\tau}(P_{1})^{\ast}$; that is, $\tau_{1}(A)$ contains a circuit of
$M_{\tau}(P_{1})^{\ast}$. This property -- if $\tau_{2}(A)$ is a circuit in
$M_{\tau}(P_{2})$ then $\tau_{1}(A)$ contains a circuit of $M_{\tau}%
(P_{1})^{\ast}$ -- is expressed by saying that the identity map of $V(F)$
defines a weak map from $M_{\tau}(P_{2})$ to $M_{\tau}(P_{1})^{\ast}$. The
weak map property is sufficient for our present purposes, but it is worth
noting that in fact, the identity map of $V(F)$ defines a strong map from
$M_{\tau}(P_{2})$ to $M_{\tau}(P_{1})^{\ast}$. That is, if $\tau_{2}(A)$ is a
circuit in $M_{\tau}(P_{2})$ then $\tau_{1}(A)$ is a disjoint union of
circuits of $M_{\tau}(P_{1})^{\ast}$. (Edmonds \cite{E} proved an equivalent
result, in which $Tch(P_{1})$ and $Tch(P_{2})$ are geometrically dual embedded
graphs on a surface; the fact that Edmonds' result yields a strong map was
mentioned by Las Vergnas \cite{L1}.) To see why this is true, consider that
the circuit-nullity formula tells us that a circuit of $M_{\tau}(P_{2})$
corresponds to a minimal circuit of $F$ involving only transitions from
$P_{2}$; say the circuit is $v_{1}$, $h_{1}$, $h_{1}^{\prime}$, $v_{2}$, ...,
$h_{\ell-1}$, $h_{\ell-1}^{\prime}=h_{0}^{\prime}$, $v_{\ell}=v_{1}$. As
$P_{1}$ and $P_{2}$ do not involve the same transition at any vertex, none of
the single transitions $h_{i-1}^{\prime}$, $v_{i}$, $h_{i}$ appears in $P_{1}%
$. Hence each of them corresponds to an edge of $Tch(P_{1})$, and the circuit
corresponds to a closed walk in $Tch(P_{1})$.\ A closed walk is a union of
edge-disjoint minimal circuits, of course, and the minimal circuits of
$Tch(P_{1})$ correspond to circuits of its cycle matroid. Finally, Theorem
\ref{core} tells us that the identity map of $V(F)$ defines an isomorphism
between the cycle matroid of $Tch(P_{1})$ and $M_{\tau}(P_{1})^{\ast}$.

\begin{corollary}
\label{Martin} In the situation of Theorem \ref{Martin1}, these conditions are equivalent:

\begin{enumerate}
\item $r(\tau(P_{1}))+r(\tau(P_{2}))=n$.

\item Each of $M_{\tau}(P_{1}),M_{\tau}(P_{2})$ is\ isomorphic to the dual of
the other.

\item $M_{\tau}(P_{1})$ and $M_{\tau}(P_{2})$ define dual matroids on $V(F)$.

\item The matroid obtained by restricting $M_{\tau}(F)$ to $\tau(P_{1}%
)\cup\tau(P_{2})$ is the direct sum $M_{\tau}(P_{1})\oplus M_{\tau}(P_{2})$.
\end{enumerate}
\end{corollary}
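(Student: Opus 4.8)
The plan is to establish the chain of equivalences $1\Leftrightarrow 2\Leftrightarrow 3\Leftrightarrow 4$ by leveraging the machinery already assembled. The central fact is Theorem \ref{Martin1}: under the hypothesis $\tau_1(v)\neq\tau_2(v)$ for all $v$, we always have $r(\tau(P_1)\cup\tau(P_2))=n$ and hence $n\leq r(\tau(P_1))+r(\tau(P_2))$. So condition 1 is exactly the statement that this inequality is an equality. Since $M_\tau(P_i)$ has $n$ elements and $\tau(P_1)\cup\tau(P_2)$ has $2n$ elements (the $\tau_i(v)$ being distinct at each vertex), the restriction $M'$ of $M_\tau(F)$ to $\tau(P_1)\cup\tau(P_2)$ is a rank-$n$ matroid on $2n$ elements, containing $M_\tau(P_1)$ and $M_\tau(P_2)$ as the restrictions to the two halves.

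First I would prove $1\Leftrightarrow 4$. The general submodular inequality $r(M')=r(\tau(P_1)\cup\tau(P_2))\leq r(\tau(P_1))+r(\tau(P_2))$ holds with equality if and only if $M'$ is the direct sum $M_\tau(P_1)\oplus M_\tau(P_2)$: indeed for a matroid on $S=S_1\sqcup S_2$ with $r(S)=r(S_1)+r(S_2)$, every basis is a union of a basis of $M|S_1$ and a basis of $M|S_2$, which forces the direct sum decomposition; the converse is immediate since rank is additive over direct sums. This gives $1\Leftrightarrow 4$ cleanly. Next, $4\Rightarrow 3$: if $M'=M_\tau(P_1)\oplus M_\tau(P_2)$, then $M_\tau(P_2)=M'/\tau(P_1)$ (contract the first summand), and contracting a spanning restriction of a matroid yields the dual of the complementary restriction — more precisely, for a direct sum $N_1\oplus N_2$ the contraction by the ground set of $N_1$ is just $N_2$, but the relevant duality comes from Theorem \ref{core}: $M_\tau(P_i)^\ast$ is the cycle matroid of $Tch(P_i)$, and when $\tau_1,\tau_2$ are everywhere distinct the touch-graphs $Tch(P_1)$ and $Tch(P_2)$ are abstract-dual graphs on $V(F)$ (each edge $v$ is a non-loop in one exactly when it is a non-loop in the other, and the elementary cocycles of one are the cycles of the other, by the closed-walk argument given just before the corollary). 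So I would deduce condition 3 from the geometric/strong-map discussion already in the text, and condition 2 ($M_\tau(P_1)\cong M_\tau(P_2)^\ast$ as abstract matroids) is then just condition 3 stated without the explicit identification of ground sets. The remaining implication $2\Rightarrow 1$ (or $3\Rightarrow 1$) is a rank count: if $M_\tau(P_1)$ is the dual of $M_\tau(P_2)$ then $r(\tau(P_1))+r(\tau(P_2))=r(M_\tau(P_2)^\ast)+r(M_\tau(P_2))=n$, since a matroid and its dual on an $n$-element set have ranks summing to $n$.

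The main obstacle I anticipate is pinning down precisely the duality in the step $4\Rightarrow 3$, namely that $M_\tau(P_1)$ and $M_\tau(P_2)$ define \emph{dual} matroids on the \emph{same} ground set $V(F)$ — not merely abstractly dual matroids. The natural route is via Theorem \ref{core}: identify $M_\tau(P_i)^\ast$ with the cycle matroid of $Tch(P_i)$, and then argue that $Tch(P_1)$ and $Tch(P_2)$ are abstract duals. The abstract-duality of the two touch-graphs should follow from the observation (already made in the paragraph preceding the corollary) that a circuit of $M_\tau(P_2)$ is a minimal $P_2$-circuit in $F$, whose single transitions are all absent from $P_1$ and hence trace a closed walk in $Tch(P_1)$ — i.e. a disjoint union of circuits of the cycle matroid of $Tch(P_1)$; symmetrically with indices swapped. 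A matroid-theoretic fact (two matroids on the same ground set, each a strong-map image of the dual of the other, with ranks summing to the ground-set size) then forces them to be exactly dual. I would also need the easy direct-sum lemma stated above, and I would verify condition 1 is the right normalization by noting that $r(M')=n$ always holds here, so condition 1 really is asserting the non-trivial equality $r(\tau(P_1))+r(\tau(P_2))=n$ rather than $\geq n$.
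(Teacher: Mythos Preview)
Your outline is correct and closes the cycle of implications, but your route to the key implication differs from the paper's and is more roundabout than necessary.

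For $1\Leftrightarrow 4$, $3\Rightarrow 2$, and $2\Rightarrow 1$ you argue essentially as the paper does. The divergence is at $1\Rightarrow 3$ (your $4\Rightarrow 3$). You first try the ``contract one summand of a direct sum'' idea, correctly notice that this just returns the other summand and not its dual, and then pivot to the strong-map discussion in the paragraph preceding the corollary: there is always a strong map $M_\tau(P_2)\to M_\tau(P_1)^\ast$ on $V(F)$, and under condition~1 the two sides have equal rank, so the quotient-with-equal-rank lemma forces equality. That works, but it imports the strong-map paragraph (which the paper presents only as an aside) together with a standard but unproved matroid fact about rank-preserving quotients.

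The paper's own argument for $1\Rightarrow 3$ is shorter and uses Theorem~\ref{Martin1} \emph{directly} --- the very result you identified as ``central'' but then did not exploit. Given condition~1 and a basis $B_1$ of $M_\tau(P_1)$, Theorem~\ref{Martin1} says $B_1\cup\{\tau_2(v):\tau_1(v)\notin B_1\}$ is a basis of $M_\tau(F)$; hence $\{\tau_2(v):\tau_1(v)\notin B_1\}$ is independent in $M_\tau(P_2)$, and its cardinality $n-|B_1|=r(M_\tau(P_2))$ makes it a basis. By symmetry the bases of $M_\tau(P_1)$ and $M_\tau(P_2)$ are exactly complementary on $V(F)$, which is the definition of duality. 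This avoids the strong-map detour and the auxiliary quotient lemma entirely. Your approach buys nothing extra here; the paper's is strictly cleaner.
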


\begin{proof}
As the rank of a direct sum is simply the total of the ranks of the summands,
the implication $4\Rightarrow1$ follows directly from the fact that
$n=r(\tau(P_{1})\cup\tau(P_{2}))$. For the opposite implication, suppose
$r(\tau(P_{1}))+r(\tau(P_{2}))=n$. If there were any nontrivial intersection
between the linear span of the columns of $M(C)$ corresponding to $\tau
(P_{1})$ and the linear span of the columns corresponding to $\tau(P_{2})$, we
would have $r(\tau(P_{1}))+r(\tau(P_{2}))>r(\tau(P_{1})\cup\tau(P_{2}))$;
$r(\tau(P_{1})\cup\tau(P_{2}))=n$, so there is no such nontrivial
intersection. We conclude that $1\Rightarrow4$.

The implication $3\Rightarrow2$ is obvious, and $2\Rightarrow1$ follows
immediately from any of the descriptions of dual matroids mentioned at the
beginning of the section.

It remains to verify the implication $1\Rightarrow3$. Suppose $r(\tau
(P_{1}))+r(\tau(P_{2}))=n$, and let $B_{1}=\{\tau_{1}(a)\mid a\in A\}$ and
$B_{2}=\{\tau_{2}(v)\mid v\notin A\}$. If $B_{1}$ is a basis of $M_{\tau
}(P_{1})$, then according to Theorem \ref{Martin1}, $B_{1}\cup B_{2}$ is an
independent set of $M_{\tau}(F)$, so $B_{2}$ is an independent set of
$M_{\tau}(P_{2})$. As $\left\vert B_{2}\right\vert =n-\left\vert
B_{1}\right\vert $ is the rank of $M_{\tau}(P_{2})$, $B_{2}$ must be a basis
of $M_{\tau}(P_{2})$. Similarly, if $B_{2}$ is a basis of $M_{\tau}(P_{2})$
then $B_{1}$ is a basis of $M_{\tau}(P_{1})$.
\end{proof}

Notice that Theorem \ref{core} implies that if the equivalent conditions
of\ Corollary \ref{Martin} hold then $Tch(P_{1})$ and $Tch(P_{2})$ are dual
graphs, and hence must be planar. It turns out that in this situation $F$ must
be planar too; see Section 9 for details.

\begin{corollary}
\label{Martin2}\cite{L2, Ma} Suppose the equivalent conditions of Corollary
\ref{Martin} hold, and let $r_{T}$ be the rank function of the cycle matroid
of $Tch(P_{1})$. Let $A\subseteq V(F)$ be any subset, and let $P_{A}$ be the
circuit partition of $F$ with $\tau(P_{A})=\{\tau_{1}(a)\mid a\in
A\}\cup\{\tau_{2}(v)\mid v\in V(F)-A\}$. Then $\left\vert P_{A}\right\vert
-c(F)=r_{T}(V(F))+\left\vert A\right\vert -2r_{T}(A)$.
\end{corollary}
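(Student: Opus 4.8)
The plan is to compute $\left\vert P_A\right\vert$ via the circuit-nullity formula and then to rewrite the resulting matroid rank $r(\tau(P_A))$ in terms of the cycle matroid rank $r_T$ of $Tch(P_1)$. First I would apply Theorem~\ref{tranmat} directly to $P_A$ to get $\left\vert P_A\right\vert = n + c(F) - r(\tau(P_A))$, so that it suffices to prove
\[
n - r(\tau(P_A)) = r_T(V(F)) + \left\vert A\right\vert - 2r_T(A).
\]
Next I would analyze $r(\tau(P_A))$ by separating the transitions of $P_A$ into the two groups $\tau_1(A) = \{\tau_1(a)\mid a\in A\}$ and $\tau_2(V(F)-A)=\{\tau_2(v)\mid v\notin A\}$. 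Because the equivalent conditions of Corollary~\ref{Martin} are assumed, condition~4 tells us that $M_\tau(F)$ restricted to $\tau(P_1)\cup\tau(P_2)$ is the direct sum $M_\tau(P_1)\oplus M_\tau(P_2)$; since $\tau(P_A)$ is a transversal-type subset of $\tau(P_1)\cup\tau(P_2)$ meeting each vertex triple once, the rank splits as
\[
r(\tau(P_A)) = r_{M_\tau(P_1)}(\tau_1(A)) + r_{M_\tau(P_2)}(\tau_2(V(F)-A)).
\]

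Then I would invoke Theorem~\ref{core}, which identifies the cycle matroid of $Tch(P_1)$ with the dual $M_\tau(P_1)^\ast$, together with its $P_2$-counterpart (using condition~3 of Corollary~\ref{Martin}, $M_\tau(P_2)$ is dual to $M_\tau(P_1)$ on $V(F)$, hence its cycle-matroid-of-touch-graph is $M_\tau(P_1)$ itself). Thus I would translate both summands into statements about $r_T$. For the first summand, the standard duality relation $\left\vert A\right\vert - r^\ast(A) = r_T(V(F)) - r_T(V(F)-A)$ applied to $M_\tau(P_1)^\ast$ — i.e.\ to the cycle matroid of $Tch(P_1)$ — gives $r_{M_\tau(P_1)}(\tau_1(A)) = \left\vert A\right\vert - r_T(V(F)) + r_T(V(F)-A)$. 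For the second summand, using that $M_\tau(P_2) \cong M_\tau(P_1)^\ast$ as matroids on $V(F)$ and that its dual has rank function $r_T$, the same duality identity applied on the complementary set yields $r_{M_\tau(P_2)}(\tau_2(V(F)-A)) = \left\vert V(F)-A\right\vert - r_T(V(F)) + r_T(A) = n - \left\vert A\right\vert - r_T(V(F)) + r_T(A)$. Adding the two pieces, substituting into $n - r(\tau(P_A))$, and using $r_T(V(F)-A) = r_T(V(F)) - (\text{corank of }A) $... — more directly, adding gives $r(\tau(P_A)) = n - 2r_T(V(F)) + r_T(V(F)-A) + r_T(A)$, so $n - r(\tau(P_A)) = 2r_T(V(F)) - r_T(V(F)-A) - r_T(A)$. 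To finish I would replace $r_T(V(F)-A)$ using the cocycle/corank bookkeeping for the cycle matroid — namely $r_T(V(F)) - r_T(V(F)-A) = \left\vert A\right\vert - (\text{nullity contribution of }A)$ — and more cleanly simply re-derive the target form by expanding $r_T(V(F)) - r_T(V(F)-A)$ via the dual rank of $A$ in $M_\tau(P_1)$, which is exactly $\left\vert A\right\vert - r_{M_\tau(P_1)^\ast}$-nullity; carefully chased, this collapses to $r_T(V(F)) + \left\vert A\right\vert - 2r_T(A)$.

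The main obstacle I expect is the second summand: getting the rank of $M_\tau(P_2)$ restricted to $\tau_2(V(F)-A)$ expressed in terms of $r_T$ requires care about which matroid is primal and which is dual, and about the fact that Theorem~\ref{core} is stated for $P_1$ specifically. I would resolve this by noting that the hypotheses of Corollary~\ref{Martin} are symmetric in $P_1$ and $P_2$ (condition~2 says each $M_\tau(P_i)$ is isomorphic to the dual of the other), so Theorem~\ref{core} applies to $P_2$ as well, giving that the cycle matroid of $Tch(P_2)$ is $M_\tau(P_2)^\ast \cong M_\tau(P_1)$; hence $M_\tau(P_2)$ itself has rank function dual to $r_T$, i.e.\ its rank of a set $S$ equals $\left\vert S\right\vert - r_T(V(F)) + r_T(V(F)-S)$. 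Once both summands are in $r_T$-language, the remaining work is elementary algebra with ranks and coranks — bookkeeping, not insight. A sanity check at $A = \varnothing$ (where $P_A = P_2$ and the formula should read $\left\vert P_2\right\vert - c(F) = r_T(V(F))$, consistent with $M_\tau(P_2)^\ast$ having rank $r_T(V(F))$) and at $A = V(F)$ (where $P_A = P_1$, giving $\left\vert P_1\right\vert - c(F) = n - r_T(V(F))$, consistent with $r(\tau(P_1)) = r_T(V(F))$) will confirm the signs.
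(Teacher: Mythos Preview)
Your overall strategy --- circuit-nullity, then the direct-sum decomposition from condition~4 of Corollary~\ref{Martin}, then duality and Theorem~\ref{core} --- is exactly the paper's. The paper organizes the last two steps slightly differently: rather than translating \emph{both} summands into $r_T$-language separately, it uses condition~3 (the duality between $M_\tau(P_1)$ and $M_\tau(P_2)$) to rewrite $r(\tau_2(V\!-\!A))$ directly in terms of the rank function of $M_\tau(P_1)$, obtaining
\[
|P_A|-c(F)=r(\tau(P_1))+|A|-2\,r_{M_\tau(P_1)}(\tau_1(A)),
\]
i.e.\ corank plus nullity of $\tau_1(A)$ in $M_\tau(P_1)$, and only then invokes Theorem~\ref{core} once. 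That single-translation route is what you should aim for; it sidesteps the bookkeeping tangle you hit at the end.

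There is a concrete error in your treatment of the second summand. By Theorem~\ref{core} the cycle matroid of $Tch(P_1)$ is $M_\tau(P_1)^{\ast}$, so $r_T$ is the rank function of $M_\tau(P_1)^{\ast}$. Condition~3 says $M_\tau(P_2)=M_\tau(P_1)^{\ast}$ as matroids on $V(F)$. Hence the rank function of $M_\tau(P_2)$ \emph{is} $r_T$, not the dual of $r_T$ as you wrote; so $r_{M_\tau(P_2)}(\tau_2(V\!-\!A))=r_T(V\!-\!A)$, not $(n-|A|)-r_T(V)+r_T(A)$. Your sanity check at $A=V(F)$ already exposes this: you assert $r(\tau(P_1))=r_T(V(F))$, but in fact $r(\tau(P_1))$ is the rank of $M_\tau(P_1)$, which is $n-r_T(V(F))$ (rank of a matroid plus rank of its dual equals the size of the ground set). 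Once you fix this direction, the hand-waved ``bookkeeping, not insight'' paragraph does not close as written; you should instead follow the paper's route and express everything as corank plus nullity of $A$ in $M_\tau(P_1)$ before passing to the dual via Theorem~\ref{core}.
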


\begin{proof}
The circuit-nullity formula tells us that $\left\vert P_{A}\right\vert
-c(F)=n-r(\tau(P_{A}))$, where $r$ is the rank function of $M_{\tau}(F)$. As
$\tau(P_{A})\subseteq\tau(P_{1})\cup\tau(P_{2})$, part 4 of Corollary
\ref{Martin} tells us that
\[
r(\tau(P_{A}))=r(\tau(P_{A})\cap\tau(P_{1}))+r(\tau(P_{A})\cap\tau
(P_{2}))\text{.}%
\]
Part 3 of Corollary \ref{Martin} tells us that the corank of $\tau(P_{A}%
)\cap\tau(P_{1})$ in $M_{\tau}(P_{1})$ is the same as the nullity of
$\tau(P_{A})\cap\tau(P_{2})$ in $M_{\tau}(P_{2})$, i.e.,
\[
r(\tau(P_{1}))-r(\tau(P_{A})\cap\tau(P_{1}))=n-\left\vert A\right\vert
-r(\tau(P_{A})\cap\tau(P_{2}))\text{.}%
\]
It follows that
\begin{align*}
\left\vert P_{A}\right\vert -c(F)  &  =n-r(\tau(P_{A}))=n-r(\tau(P_{A}%
)\cap\tau(P_{1}))-r(\tau(P_{A})\cap\tau(P_{2}))\\
&  =r(\tau(P_{1}))-r(\tau(P_{A})\cap\tau(P_{1}))+\left\vert A\right\vert
-r(\tau(P_{A})\cap\tau(P_{1})).
\end{align*}
This is the sum of the corank and the nullity of $\tau(P_{A})\cap\tau(P_{1})$
in $M_{\tau}(P_{1})$, so it equals the sum of the nullity and the corank of
$\tau(P_{A})\cap\tau(P_{1})$ in the dual matroid of $M_{\tau}(P_{1})$. The
formula of the statement follows, because Theorem \ref{core} tells us that the
cycle matroid of $Tch(P_{1})$ and the dual matroid of $M_{\tau}(P_{1})$ define
the same matroid on $V(F)$.
\end{proof}

\section{Proof of Theorem \ref{MTutte}}

In this section we prove Theorem \ref{MTutte} of the introduction, which
asserts that if we are given $c(F)$, then the parametrized Tutte polynomial of
$M_{\tau}(F)$ determines the directed and undirected Martin polynomials of
$F$. The idea of the proof is that we choose parameters for which every
nonzero parameter product in the parametrized Tutte polynomial $f(M_{\tau
}(F))$ corresponds to a circuit partition.

Begin with a ring $R$ of polynomials in $6\left\vert V(F)\right\vert +2$
indeterminates, $x$ and $y$ in addition to $\alpha(\tau)$ and $\beta(\tau)$
for each transition $\tau\in\mathfrak{T}(F)$. Let $J$ be the ideal of $R$
generated by all products $\alpha(\tau_{1})\alpha(\tau_{2})$, $\alpha(\tau
_{1})\alpha(\tau_{3})$, $\alpha(\tau_{2})\alpha(\tau_{3})$ and $\beta(\tau
_{1})\beta(\tau_{2})\beta(\tau_{3})$ such that $\tau_{1},\tau_{2},\tau_{3}$
are the three transitions corresponding to a single vertex. Consider the
parametrized Tutte polynomial $f(M_{\tau}(F))$ in the quotient ring $R/J$.
Then the nonzero terms of $f(M_{\tau}(F))$ correspond to subsets
$A\subset\mathfrak{T}(F)$ that include precisely one transition for each
vertex of $F$. Each such subset $A$ has $|A|=n=r(\mathfrak{T}(F))$, so Theorem
\ref{tranmat} tells us that%
\[
f(M_{\tau}(F))=\sum_{P}\left(  \prod\limits_{\tau\in\tau(P)}\alpha
(\tau)\right)  \left(  \prod\limits_{\tau\notin\tau(P)}\beta(\tau)\right)
(x-1)^{\left\vert P\right\vert -c(F)}(y-1)^{\left\vert P\right\vert
-c(F)}\text{,}%
\]
with a term for each circuit partition $P$ of $F$.

Note that if $\pi:R\rightarrow R/J$ is the canonical map onto the quotient,
then $\pi$ is injective on the additive subgroup of $R$ generated by monomials
that include no more than one of the $\alpha(\tau)$ corresponding to any
vertex and no more than two of the $\beta(\tau)$ corresponding to any vertex.
The version of the Martin polynomial used by Las Vergnas \cite{L}, $\sum
_{P}(\zeta-1)^{\left\vert P\right\vert -1}$, may be obtained from the inverse
image $\pi^{-1}f(M_{\tau}(F))$ by setting $\alpha(\tau)=1$ and $\beta(\tau)=1$
for every transition $\tau$, setting $x=\zeta$ and $y=2$, and multiplying by
$(\zeta-1)^{c(F)-1}$.

To obtain a directed Martin polynomial, enlarge $J$ by including $\alpha
(\tau)$ for every direction-violating transition $\tau$.

\section{4-regular graphs and ribbon graphs}

In this section we provide a brief exposition of some ideas from topological
graph theory. For thorough discussions we refer to the literature \cite{BR1,
BR2, Ch, EMM, EMM1, EMM2, GT, MT}.

A ribbon graph is a graph $G$ given with additional information regarding the
edges and vertices of $G$. Each vertex $v\in V(G)$ is given with a prescribed
order of the half-edges incident at $v$; the collection of these orders is
called a rotation system on $G$. Also, each edge $e\in E(G)$ is labeled $1$ or
$-1$. Two ribbon graphs are equivalent if there is a graph isomorphism between
them, such that the rotation systems and $\pm1$ labels are related by a
sequence of operations of these two types: (a) cyclically permute the
half-edge order at a vertex, leaving the $\pm1$ labels unchanged, and (b)
reverse the half-edge order at a vertex, reversing all the $\pm1$ labels of
non-loop edges incident at that vertex (n.b. loops retain their $\pm1$ labels).

The definition of equivalence is motivated by thinking of a ribbon graph $G$
as a blueprint for constructing a surface with boundary $S(G)$. The first step
of the construction is to replace each vertex with a disk, whose boundary
carries a preferred orientation. The second step is to choose, for each
half-edge of $G$, two points on the boundary of the disk representing the
vertex incident on that half-edge; the two points that represent a half-edge
should be close together, and separated from those representing any other
half-edge. Moreover, these pairs of points should be chosen so that as one
walks around the boundary circle in the preferred direction, one encounters
the pairs in the prescribed order of the corresponding half-edges. The third
step is to replace each edge with a narrow band, whose two ends correspond to
the two half-edges of the edge. Each end of the band is connected to the disk
corresponding to the vertex incident on that half-edge, by identifying the end
with the short arc on the disk's boundary bounded by the points corresponding
to that half-edge. If the edge is labeled $1$ then the band-ends are attached
so that the band's boundary may be oriented consistently with the preferred
orientation(s) of the incident disk(s); if the edge is labeled $-1$ then the
band-ends are attached so that the band's boundary cannot be oriented
consistently with the preferred orientation(s). See Fig. \ref{tranmf6}, where
bands representing edges labeled $1$ are on the left, and bands representing
edges labeled $-1$ are on the right.%
\begin{figure}
[p]
\begin{center}
\includegraphics[
trim=2.001846in 7.819479in 2.958586in 1.504465in,
height=1.7115in,
width=3.5639in
]%
{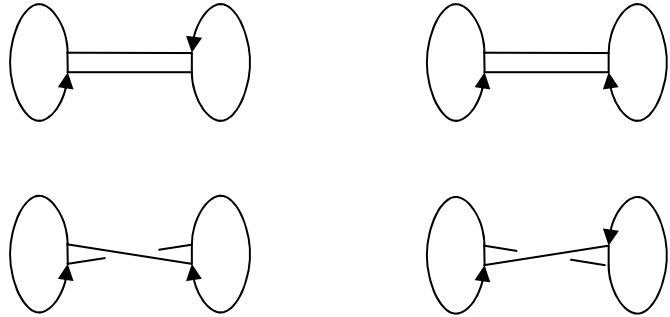}%
\caption{Disks and bands representing edges labeled $1$ and $-1$,
respectively.}%
\label{tranmf6}%
\end{center}
\end{figure}
\begin{figure}
[ptb]
\begin{center}
\includegraphics[
trim=2.136096in 3.681373in 1.610993in 1.209515in,
height=4.612in,
width=3.5924in
]%
{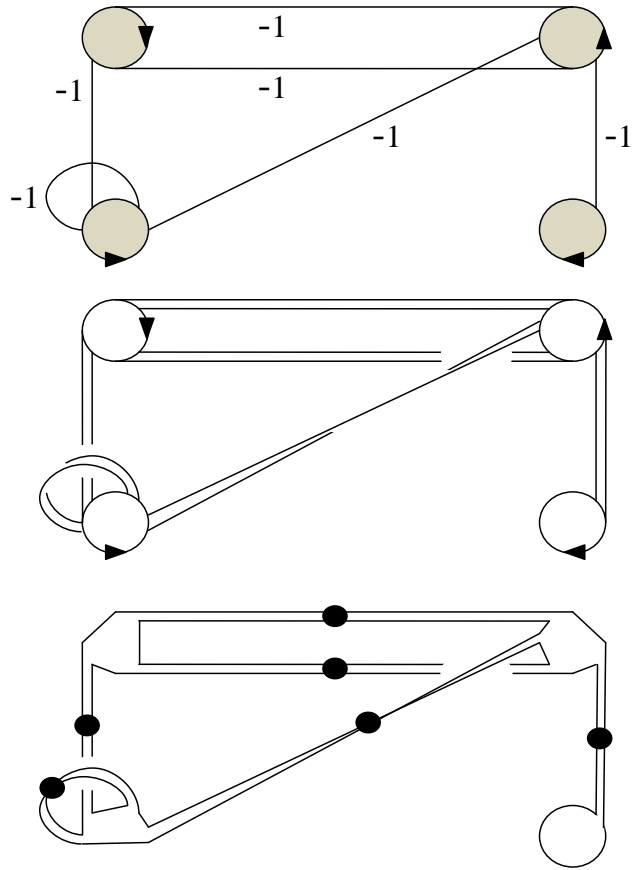}%
\caption{At the top, a ribbon graph $G$. In the middle, the disks and bands of
$S(G)$. At the bottom, the medial graph $F(G)$.}%
\label{tranmf7}%
\end{center}
\end{figure}

The purpose of this section is to present another description of equivalence
classes of ribbon graphs, using circuit partitions in 4-regular graphs. This
description appears at least implicitly in writings of Bouchet \cite{Bmaps},
Edmonds \cite{E} and Tutte \cite{Tu}, but it does not seem to have been used
in more recent work. Suppose a ribbon graph $G$ is given, with disks
$D_{1},...,D_{k}$ corresponding to the vertices of $G$ and bands
$B_{1},...,B_{n}$ corresponding to the edges of $G$. The medial graph $F=F(G)$
is a 4-regular graph constructed from $G$ as follows. First, $V(F)=E(G)$.
Second, the four half-edges of $F$ incident on a vertex $v_{i}$ correspond to
the four corners of the band $B_{i}$. Finally, the edges of $F$ are obtained
by pairing together the half-edges that correspond to consecutive points on a
disk boundary, but do not correspond to the same band-end. See Fig.
\ref{tranmf7} for an example.

There are two natural circuit partitions in the medial graph, which we denote
$\delta$ and $\varepsilon$; like any circuit partitions, they are determined
by choosing the appropriate transitions at each vertex. The $\delta$
transition at a vertex $v_{i}$ of $F(G)$ pairs together the half-edges
corresponding to the same end of the band $B_{i}$. We use the letter $\delta$
because the circuits of this partition correspond to the disks $D_{1}%
,...,D_{k}$. The $\varepsilon$ transition at $v_{i}$ pairs together the
half-edges corresponding to the same edge of the band $B_{i}$. The circuits of
the $\varepsilon$ partition correspond to the boundary curves of $S(G)$.

The $\delta$ and $\varepsilon$ circuit partitions of the example of Fig.
\ref{tranmf7} are indicated in\ Fig. \ref{tranmf8}. Circuits are indicated
using the convention that when a circuit is followed through a vertex, the
style (dashed or plain) is maintained; however it is sometimes necessary to
change the style in the middle of an edge, to make sure that the transitions
are indicated unambiguously.%
\begin{figure}
[tb]
\begin{center}
\includegraphics[
trim=2.399497in 5.491795in 1.881192in 1.059839in,
height=3.365in,
width=3.1903in
]%
{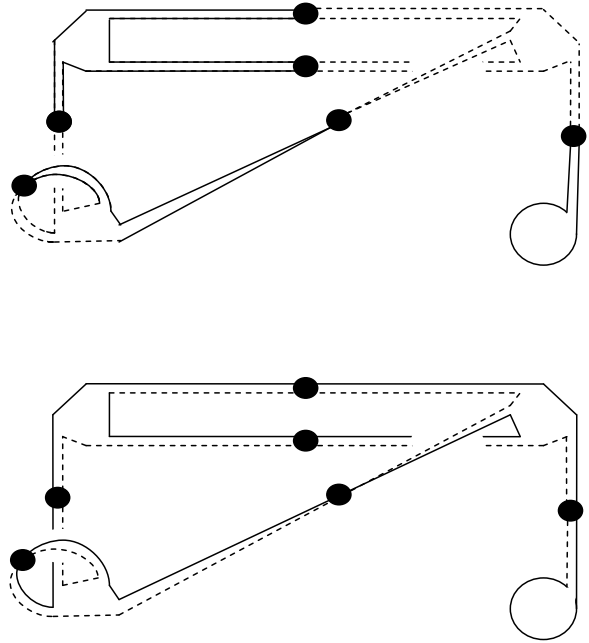}%
\caption{The $\delta$ and $\varepsilon$ circuit partitions of the example of
Fig. \ref{tranmf7}.}%
\label{tranmf8}%
\end{center}
\end{figure}

It is a simple matter to reverse the construction. Suppose a 4-regular graph
$F$ is given with two circuit partitions that do not involve the same
transition at any vertex. Label the two circuit partitions $\delta$ and
$\varepsilon$, and let $\{\Delta_{1},...,\Delta_{k}\}$ be the circuit
partition of $F$ determined by the $\delta$ transitions. Let $V(F)=\{v_{1}%
,...,v_{n}\}$ and for each $j\in\{1,...,k\}$, let $\Delta_{j}$ be $v_{j1}$,
$h_{j1}$, $h_{j1}^{\prime}$, $v_{j2}$, $h_{j2}$, $h_{j2}^{\prime}$, ...,
$h_{j(\ell_{j}-1)}^{\prime}$, $v_{j\ell_{j}}=v_{j1}$. Let $D_{1},...,D_{k}$ be
pairwise disjoint disks and for each $j\in\{1,...,k\}$, let $w_{j1}$,
$w_{j1}^{\prime}$, $w_{j2}$, $w_{j2}^{\prime}$, $...$, $w_{j(\ell_{j}-1)}$,
$w_{j(\ell_{j}-1)}^{\prime}$ be $2(\ell_{j}-1)$ distinct, consecutive points
on the boundary circle of $D_{j}$. Also, let $w_{j\ell_{j}}=w_{j1}$. Associate
a band $B_{i}$ with each vertex $v_{i}$ of $F$; if $v_{i}=v_{ab}=v_{cd}$ then
one end of $B_{i}$ is the arc from $w_{a(b-1)}^{\prime}$ to $w_{ab}$ on
$D_{a}$, and the other end of $B_{i}$ is the arc from $w_{c(d-1)}^{\prime}$ to
$w_{cd}$ on $D_{c}$. The $\delta$ and $\varepsilon$ circuit partitions do not
involve the same transition at any vertex, so the $\varepsilon$ transition at
$v_{i}$ pairs $h_{a(b-1)}^{\prime}$ with one of $h_{c(d-1)}^{\prime},h_{cd}$,
and pairs $h_{ab}$ with the other one of $h_{c(d-1)}^{\prime},h_{cd}$. One
edge of the band $B_{i}$ should connect $w_{a(b-1)}^{\prime}$ to one of
$w_{c(d-1)}^{\prime},w_{cd}$ and the other edge of $B_{i}$ should connect
$w_{ab}$ to the other one of $w_{c(d-1)}^{\prime},w_{cd}$, as dictated by the
$\varepsilon$ transition.

There are several reasons that representing a ribbon graph $G$ using circuit
partitions in the medial graph $F(G)$ is of value.

1. Clearly if $G$ and $G^{\prime}$ are ribbon graphs with the same medial,
then they differ only in their choices of which of the three transitions at
each vertex are designated $\delta$ and $\varepsilon$. (No particular
designation is used for the third transition.) This observation provides a new
explanation of Chmutov's theory of partial duality \cite{Ch} and
Ellis-Monaghan's and Moffatt's more general twisted duality \cite{EMM, EMM2}.
Namely: partial duals are obtained by interchanging the $(\delta,\varepsilon)$
designations at some vertices, and twisted duals are obtained by permuting the
$(\delta,\varepsilon,$ other) designations at some vertices. A distinctive
feature of these\ duality theories is their reliance on surface geometry; this
reliance can be problematic because twisted duals of ribbon graphs are not
naturally embedded in the same surface. They are, however, represented by
different pairs of circuit partitions\ in the same medial graph.

2. In particular, the circuit partition representation helps one to understand
a theorem of Ellis-Monaghan and Moffatt \cite{EMM}, that two ribbon graphs are
twisted duals if and only if they have the same medial graph. Their proof
seems to require embedding the ribbon graphs, constructing embedded medials,
and then \textquotedblleft disembedding\textquotedblright\ the medials. The
circuit partition representation gives an alternative way to think about this theorem.

3. As we discuss in the next section, topological properties of the surface
$S(G)$ are represented by combinatorial properties of the medial graph $F(G)$
and transition matroid $M_{\tau}(F(G))$.

Before closing this section, we recall a result of Jaeger \cite{J3} that was
mentioned in Section 2: every cographic matroid is represented by a square
$GF(2)$-matrix whose off-diagonal entries agree with some interlacement matrix
$\mathcal{A}(\mathcal{I}(C))$. The discussion above contains the following
outline of a proof of Jaeger's theorem. If $G$ is a graph then we may consider
it as a ribbon graph, with any rotation system. $G$ is then isomorphic to the
touch-graph of the circuit partition of $F(G)$ given by the $\delta$
transitions. As discussed in Section 5, the dual of the cycle matroid of $G$
is then isomorphic to the submatroid of $M_{\tau}(F(G))$ consisting of the
various elements $\delta(v)$, $v\in V(F(G))$. This outline for a proof of
Jaeger's theorem is detailed in \cite{BHT}, without any explicit reference to
topological graph theory. J. A. Ellis-Monaghan noticed it, and was kind enough
to suggest that the first part of the argument might be modified to involve
ribbon graphs. The discussion above verifies her insight.

\section{Topological Tutte polynomials}

Several authors have studied polynomials associated with a ribbon graph $G$,
which combine combinatorial information regarding $G$ and topological
information regarding $S(G)$. Two such polynomials were introduced in the
1970s by Las Vergnas \cite{L2} and Penrose \cite{Pen}, but broader interest in
such polynomials seems to have been relatively quiet until the work of
Bollob\'{a}s and Riordan \cite{BR1, BR2}. (In contrast, the intervening
decades were a time of very intense research regarding knot polynomials.) In
particular, the discussion of duality in \cite{BR2} has stimulated several
interesting developments, including the geometrically inspired partial duality
of Chmutov \cite{Ch} and twisted duality of Ellis-Monaghan and Moffatt
\cite{EMM, EMM2}. We do not attempt to provide a detailed account of these
topics here. Instead we content ourselves with the obvious observation that in
the representation of ribbon graphs using circuit partitions discussed in the
previous section, the (geometric) dual ribbon graph of Bollob\'{a}s and
Riordan \cite{BR2} is represented by interchanging the $\delta$ and
$\varepsilon$ transitions at all vertices of $F(G)$, the dual with respect to
an edge of Chmutov \cite{Ch} is represented by interchanging the $\delta$ and
$\varepsilon$ transitions at a single vertex of $F(G)$, and the half-twist of
an edge of Ellis-Monaghan and Moffatt \cite{EMM} is represented by
interchanging the $\varepsilon$ transition at a vertex with the non-$\delta$,
non-$\varepsilon$ transition.

As discussed in the introduction, Theorem \ref{tranmat} implies that any
polynomial which has a description as a vertex-weighted Martin polynomial of a
4-regular graph $F$ can be derived from a parametrized Tutte polynomial of
$M_{\tau}(F)$. Theorems \ref{MTutte} and \ref{JTutte} follow directly, because
the polynomials mentioned in these theorems have such descriptions. For
instance, the homflypt polynomial \cite{FYHLMO, PT} has a circuit partition
model due to Jaeger \cite{J4}, which has been extended to the Kauffman
polynomial \cite{KI} by Kauffman \cite{KT}.

At first glance, topological Tutte polynomials like those of Bollob\'{a}s and
Riordan \cite{BR1, BR2} may seem to fall into a different category, because
their definitions do not involve circuit partitions. Instead, the topological
Tutte polynomials of a ribbon graph $G$ are defined using combinatorial
properties of $G$ and topological properties of $S(G)$. It turns out, though,
that almost all of this information is available in the transition matroid of
$F(G)$.

\begin{proposition}
\label{tranrib}Let $G$ be a ribbon graph. For each $v\in V(F(G))$, let
$\delta(v)$ and $\varepsilon(v)$ denote the $\delta$ and $\varepsilon$
transitions at $v$.

\begin{enumerate}
\item $G$, the surface $S(G)$ and the medial $F(G)$ all have the same number
of connected components.

\item The boundary curves of $S(G)$ correspond to the circuits in the
$\varepsilon$ partition.

\item The cycle matroid of $G$ is the dual matroid of $M_{\tau}(\delta).$

\item For each subset $A\subseteq V(F(G))$, let $P_{A}$ be the circuit
partition of $F(G)$ that involves $\delta$ transitions at vertices in $A$, and
$\varepsilon$ transitions at vertices not in $A$. Then $S(G)$ is orientable if
and only if $\left\vert P_{A}\right\vert \neq\left\vert P_{A-\{a\}}\right\vert
$ $\forall A\subseteq V(F(G))$ $\forall a\in A$.
\end{enumerate}
\end{proposition}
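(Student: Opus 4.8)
The plan is to establish each of the four assertions in turn, leaning on the correspondences already set up between a ribbon graph $G$, its medial $F=F(G)$, and the pair of circuit partitions $\delta,\varepsilon$ in $F$, together with Theorem \ref{core} relating touch-graphs to duals of restrictions of $M_{\tau}(F)$.

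For assertion 1, I would trace through the reconstruction described in Section 7: the disks $D_j$ correspond bijectively to circuits of $\delta$, and each band $B_i$ (equivalently each vertex of $F$) has its two ends attached to (not necessarily distinct) disks. The connected components of $G$, of $S(G)$, and of $F$ are then all governed by the same incidence structure between bands and disks, so they agree; the one point requiring a sentence of care is that gluing bands cannot disconnect a component of $S(G)$ relative to the components of the underlying vertex-edge structure of $G$, and that a band-end always touches a disk so no circuit of $\delta$ becomes isolated from its vertices. Assertion 2 is essentially the definition of $\varepsilon$ recalled in Section 7 — the $\varepsilon$ transition at $v_i$ pairs the half-edges belonging to the same edge of the band $B_i$, and following a boundary curve of $S(G)$ means alternately running along a band-edge and along a short disk arc — so I would just make the identification of a boundary curve with an $\varepsilon$-circuit explicit. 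Assertion 3 is immediate from the material at the end of Section 5 and the start of Section 7: $G$ is isomorphic to the touch-graph $Tch(\delta)$ (the disks are the vertices, the bands the edges), and Theorem \ref{core} says the cycle matroid of $Tch(P)$ is the dual of $M_{\tau}(P)$; applying this with $P=\delta$ gives exactly that the cycle matroid of $G$ is $M_{\tau}(\delta)^{*}$.

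Assertion 4 is the substantive one and I expect it to be the main obstacle. The strategy is to relate the parity of circuit-count changes to orientability via a rank/parity computation. Fix an Euler system $C$ of $F$. By the circuit-nullity formula, $\lvert P_A\rvert - c(F) = n - r(\tau(P_A))$, so $\lvert P_A\rvert \neq \lvert P_{A-\{a\}}\rvert$ for every $A$ and every $a\in A$ is equivalent to: replacing the $\delta$ transition at any one vertex by the $\varepsilon$ transition always changes the rank of the corresponding transversal in $M_{\tau}(F)$ — i.e., the $\varepsilon(a)$ column is never in the span of the other columns of $\tau(P_{A-\{a\}})$... rather, more precisely, swapping $\delta(a)$ out and $\varepsilon(a)$ in always changes the rank by $\pm 1$ rather than leaving it fixed. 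I would then translate this linear-algebraic condition into the combinatorics of $S(G)$ by recalling that $S(G)$ is orientable precisely when one can assign $\pm1$ labels consistently, which in the circuit-partition model amounts to a condition on how the $\varepsilon$-circuits pass through vertices relative to $\delta$; concretely, orientability of the surface built from $A$-disks is detected by whether merging in one more $\delta$-disk (passing from $P_{A-\{a\}}$ to $P_A$) splits a boundary curve (rank drops, count rises) versus joins two (rank rises, count drops) — a nonorientable handle is exactly the degenerate case where the count is unchanged. The cleanest route is probably to invoke the known fact (implicit in \cite{E, Tu, Bmaps}) that for a ribbon graph the Euler-genus and orientability are encoded by the ranks $r(\tau(P_A))$ as $A$ varies, equivalently by the behavior of $\lvert P_A\rvert$ under single-vertex transition changes; I would cite this and then check that the "always changes" condition lines up with orientability by examining the two model edges of Fig. \ref{tranmf6} (the $+1$ band and the $-1$ band) and observing that a single $-1$ band forces a vertex $a$ at which $\lvert P_A\rvert = \lvert P_{A-\{a\}}\rvert$.

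The hard part will be pinning down the last equivalence rigorously without re-deriving the whole orientability theory of band decompositions; I would either (i) give a short self-contained argument that a $\mathbb{Z}/2$ orientation of $S(G)$ is the same data as a choice, for each disk, of one of its two boundary orientations compatible across all bands, and show directly that such a choice exists iff no single-vertex $\delta\to\varepsilon$ swap preserves circuit count, or (ii) reduce to the classical statement by identifying $\lvert P_A\rvert$ with the number of faces of the embedding determined by $A$ and quoting the Euler-characteristic/orientability criterion. Option (i) is more in the spirit of the paper and is what I would attempt first, falling back to (ii) if the bookkeeping with half-edges and band corners becomes unwieldy.
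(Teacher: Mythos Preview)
Your treatment of parts 1--3 is correct and matches the paper's: part 1 via the disk--band incidence structure, part 2 by definition of the $\varepsilon$ partition, and part 3 by recognizing $G\cong Tch(\delta)$ and invoking Theorem \ref{core}.

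For part 4, your forward direction is fine and agrees with the paper: if $S(G)$ is orientable, coherent disk orientations give edge-directions in $F(G)$ compatible with every $\delta$ and $\varepsilon$ transition, and then the local picture at any vertex forces $\lvert P_A\rvert\neq\lvert P_{A-\{a\}}\rvert$. The gap is in the converse. You propose to ``show directly that such a choice exists iff no single-vertex $\delta\to\varepsilon$ swap preserves circuit count,'' but you give no mechanism for \emph{producing} the orientation from the circuit-count hypothesis, and neither your rank reformulation nor your Euler-characteristic fallback supplies one. Knowing that every single swap changes $\lvert P_A\rvert$ is a parity condition on a large family of transversals; by itself it does not hand you a system of compatible disk orientations, and option (ii) is worse still, since face counts alone do not distinguish the orientable and nonorientable surfaces of the same Euler characteristic.

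The paper's missing ingredient is this: use Kotzig's observation to build an Euler system $C=P_B$ of $F(G)$ that involves \emph{only} $\delta$ and $\varepsilon$ transitions (start from any $P_A$ and repeatedly swap at a vertex where two circuits meet, decreasing the count). Orient the circuits of $C$ arbitrarily; this directs every edge of $F(G)$. Now test each unused $\delta$ or $\varepsilon$ transition against these directions. If $\varepsilon(b)$ is inconsistent for some $b\in B$, the local picture gives $\lvert P_B\rvert=\lvert P_{B-\{b\}}\rvert$; if $\delta(v)$ is inconsistent for some $v\notin B$, it gives $\lvert P_{B\cup\{v\}}\rvert=\lvert P_B\rvert$. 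Either contradicts the hypothesis, so all $\delta$ and $\varepsilon$ transitions respect the edge-directions, and $S(G)$ is orientable. This Euler-system step is the one idea your plan lacks.
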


\begin{proof}
For part 1, note that two vertices are connected by a walk in $G$ if and only
if the corresponding disks are connected by a sequence of disks and bands in
$S(G)$, and such a sequence of disks and bands corresponds to a walk in
$F(G)$. The walk in $F(G)$ may be considerably longer than the original walk
in $G$, of course, because vertices of $G$ (which may be traversed in a single
step) are replaced with segments of the $\delta$ circuits of $F(G)$.

Part 2 is clear, as the circuits of the $\varepsilon$ partition are defined to
follow the boundary of $S(G)$.

Part 3 is discussed in\ Section 5.

For part 4, consider that $S(G)$ is orientable if and only if it is possible
to coherently orient the boundaries of all the disks and bands involved in its
construction. Clearly this holds if and only if it is possible to choose
edge-directions in $F(G)$ that are respected by all the $\delta$ and
$\varepsilon$ transitions. If it is possible to choose such edge-directions,
then it must be that $\left\vert P_{A}\right\vert \neq\left\vert
P_{A-\{a\}}\right\vert $ for every choice of $A\subseteq V(F(G))$ and $a\in
A$. See Fig. \ref{tranmf9}, where the circuit partition pictured on the left
is one of $P_{A},P_{A-\{a\}}$ and the circuit partition pictured in the middle
is the other of $P_{A},P_{A-\{a\}}$; clearly the partition pictured in the
middle includes one more circuit.

For the converse, recall Kotzig's observation that $F$ must have an Euler
system $C$ that involves only $\delta$ and $\varepsilon$ transitions \cite{K}.
The recursive construction of such an Euler system is simple. Begin with any
circuit partition $P_{0}$ that involves only $\delta$ and $\varepsilon$
transitions. If $P_{0}$ is not an Euler system, there is a vertex $v$ at which
two distinct circuits of $P_{0}$ are incident. The transition of $P_{0}$ at
$v$ is either $\delta(v)$ or $\varepsilon(v)$; let $P_{1}$ be the circuit
partition that involves the same transition as $P_{0}$ at every other vertex,
and involves the different element of $\{\delta(v),\varepsilon(v)\}$. Then
$\left\vert P_{1}\right\vert =\left\vert P_{0}\right\vert -1$, and $P_{1}$
inherits the property that it involves only $\delta$ and $\varepsilon$
transitions. Repeat this process until an Euler system $C$ is obtained.

As $C$ involves only $\delta$ and $\varepsilon$ transitions, $C=P_{B}$ for
some $B\subseteq V(F(G))$. Choose orientations for the circuits in $C$, and
use these orientations to direct all the edges of $F(G)$. If $b\in B$ and
$\varepsilon(b)$ is inconsistent with these edge-directions then Fig.
\ref{tranmf9} indicates that $\left\vert P_{B}\right\vert =\left\vert
P_{B-\{b\}}\right\vert $. If $v\notin B$ and $\delta(v)$ is inconsistent with
these edge-directions then Fig. \ref{tranmf9} indicates that $\left\vert
P_{B}\right\vert =\left\vert P_{B\cup\{v\}}\right\vert $. Consequently if
$\left\vert P_{A}\right\vert \neq\left\vert P_{A-\{a\}}\right\vert $ $\forall
A\subseteq V(F(G))$ $\forall a\in A$ then the edge-directions must be
respected by all the $\delta$ and $\varepsilon$ transitions.
\end{proof}

%

\begin{figure}
[ptb]
\begin{center}
\includegraphics[
trim=1.469946in 8.948653in 0.668698in 1.272247in,
height=0.6166in,
width=4.7997in
]%
{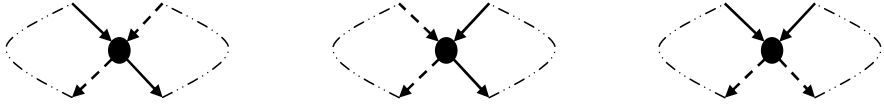}%
\caption{Three circuit partitions that differ at only one vertex. The one on
the right is inconsistent with the indicated edge-directions.}%
\label{tranmf9}%
\end{center}
\end{figure}

As the circuit-nullity formula ties the number of circuits in a circuit
partition $P$ to the rank of $\tau(P)$ in the matroid $M_{\tau}(F(G))$, it
follows that $M_{\tau}(F(G))$ contains enough information to determine the
cycle matroid of $G$,\ the number of boundary curves in $S(G)$, and the
orientability of $S(G)$. The matroid cannot detect the number of connected
components, of course, because it is invariant under connected sums and
separations. Theorem \ref{BR} follows, for the various topological Tutte
polynomials are all defined by subset expansions in which the contribution of
a subset $X\subseteq E(G)$ is determined by the cycle matroid of $G$, the
cycle matroid of the geometric dual, and topological characteristics of the
surface $S(G[X])$, where $G[X]$ is the graph obtained from $G$ by removing
edges not in $X$. When we remove an edge from $G$ the effect on the medial
$F(G)$ is simply to perform the detachment corresponding to the $\delta$
transition at the corresponding vertex. As discussed in\ Section 2, the
transition matroid of the detached graph is a minor of $M_{\tau}(F(G))$, so
all information about that matroid is present in $M_{\tau}(F(G))$.

We should note that all the information about $G$ and $S(G)$ discussed in
Proposition \ref{tranrib} is contained in the submatroid of $M_{\tau}(F(G))$
consisting of the $\delta$ and $\varepsilon$ transitions. The Euler systems of
$F(G)$ involving only $\delta$ and $\varepsilon$ transitions are determined by
this submatroid, and these Euler systems in turn determine a $\Delta$-matroid.
The corresponding $\Delta$-matroid version of Proposition \ref{tranrib} is due
to Bouchet \cite{Bmaps}, and Proposition \ref{tranrib} could be deduced from
the $\Delta$-matroid version simply by observing that the matroid $M_{\tau
}(F(G))$ contains enough information to determine all the $\Delta$-matroids
associated with ribbon graphs with $F(G)$ as medial. This observation is a
special case of the fact that all binary $\Delta$-matroids are determined by
isotropic matroids of graphs; see \cite{Tnewnew} for details.

\section{Planar 4-regular graphs}

If a 4-regular graph $F$ is imbedded in the plane, then the complementary
regions can be colored black or white, in such a way that regions that share
an edge of $F$ have different colors. (This observation dates back to the
founding of topology\ \cite{List}.) Let $P_{B}$ and $P_{W}$ be the circuit
partitions of $F$ that give the boundaries of the black and white regions,
respectively. Euler's formula tells us that
\[
2c(F)=\left\vert V(F)\right\vert -\left\vert E(F)\right\vert +\left\vert
P_{B}\right\vert +\left\vert P_{W}\right\vert =-n+\left\vert P_{B}\right\vert
+\left\vert P_{W}\right\vert \text{,}%
\]
and the circuit-nullity formula tells us that
\[
\left\vert P_{B}\right\vert -c(F)=n-r(\tau(P_{B}))\text{ and }\left\vert
P_{W}\right\vert -c(F)=n-r(\tau(P_{W}))\text{.}%
\]
Combining these formulas we conclude that
\[
r(\tau(P_{B}))+r(\tau(P_{W}))=2n+2c(F)-\left\vert P_{B}\right\vert -\left\vert
P_{W}\right\vert =n\text{.}%
\]
That is, $P_{B}$ and $P_{W}$ satisfy the equivalent conditions of Corollary
\ref{Martin}.

Conversely, suppose $F$ is a 4-regular graph with circuit partitions $P_{1}$
and $P_{2}$, which satisfy the equivalent conditions of Corollary
\ref{Martin}. Let $G$ be the ribbon graph constructed from $F$ using $P_{1}$
and $P_{2}$ as the $\delta$ and $\varepsilon$ circuit partitions
(respectively), as in\ Section 7. $S(G)$ is a surface with boundary
constructed from $\left\vert P_{1}\right\vert $ disks and $n$ bands, which has
$\left\vert P_{2}\right\vert $ boundary curves. Consequently, if $S^{\prime
}(G)$ is the closed surface obtained from $S(G)$ by attaching a disk to each
boundary curve, then the Euler characteristic of $S^{\prime}(G)$ is
$\left\vert P_{1}\right\vert -n+\left\vert P_{2}\right\vert $. The
circuit-nullity formula tells us that%
\[
\left\vert P_{1}\right\vert -n+\left\vert P_{2}\right\vert =c(F)+n-r(\tau
(P_{1}))-n+c(F)+n-r(\tau(P_{2}))
\]
so since $r(\tau(P_{1}))+r(\tau(P_{2}))=n$, the Euler characteristic of
$S^{\prime}(G)$ is $2c(F)$. As $c(F)$ is the number of connected components of
$S^{\prime}(G)$, we conclude that each connected component of $S^{\prime}(G)$
has Euler characteristic 2; that is, each connected component is a sphere. $F$
is imbedded in $S^{\prime}(G)$, so $F$ is planar.

We have proven the following.

\begin{proposition}
\label{Martin3}A 4-regular graph $F$ is planar if and only if it has a pair of
circuit partitions that satisfy the equivalent conditions of Corollary
\ref{Martin}.
\end{proposition}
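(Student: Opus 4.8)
The plan is to prove the two implications separately, in each case translating between circuit partitions and surface data via Euler's formula and the circuit-nullity formula of Theorem \ref{tranmat}.

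For the ``only if'' direction, suppose $F$ is imbedded in the plane. The complementary regions can be colored black or white so that regions sharing an edge receive different colors, and I would let $P_B$ and $P_W$ be the circuit partitions tracing the boundaries of the black and white regions. Since a $4$-regular graph on $n$ vertices has $2n$ edges, Euler's formula applied to each connected component gives $2c(F)=|V(F)|-|E(F)|+|P_B|+|P_W|=-n+|P_B|+|P_W|$. Adding the circuit-nullity identities $|P_B|-c(F)=n-r(\tau(P_B))$ and $|P_W|-c(F)=n-r(\tau(P_W))$ and eliminating $|P_B|+|P_W|$ yields $r(\tau(P_B))+r(\tau(P_W))=n$, which is condition~1 of Corollary \ref{Martin}; so $P_B$ and $P_W$ satisfy the equivalent conditions there.

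For the ``if'' direction, suppose $F$ has circuit partitions $P_1,P_2$ satisfying the conditions of Corollary \ref{Martin}. These are stated in the situation of Theorem \ref{Martin1}, so $\tau_1(v)\neq\tau_2(v)$ for every vertex $v$, and I can apply the reverse construction of Section 7: let $G$ be the ribbon graph with $F(G)=F$ in which $P_1$ plays the role of $\delta$ and $P_2$ the role of $\varepsilon$. Then $S(G)$ is assembled from $|P_1|$ disks and $n$ bands and has $|P_2|$ boundary curves, so the closed surface $S'(G)$ obtained by attaching a disk to each boundary curve has Euler characteristic $|P_1|-n+|P_2|$. Substituting $|P_i|=c(F)+n-r(\tau(P_i))$ and using $r(\tau(P_1))+r(\tau(P_2))=n$, this Euler characteristic equals $2c(F)$. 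Since $S'(G)$ has $c(F)$ connected components, every component has Euler characteristic $2$ and is hence a sphere; and $F$, being the medial graph of $G$, is imbedded in $S'(G)$, so $F$ is planar.

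The Euler-characteristic arithmetic and the classification of closed surfaces of Euler characteristic $2$ are routine. The step warranting the most care is the reverse direction's use of Section 7: one must check that the hypothesis needed for that construction --- that $P_1$ and $P_2$ never share a transition --- is genuinely available (it is, being built into Corollary \ref{Martin} through Theorem \ref{Martin1}), and that capping the boundary curves of $S(G)$ with disks leaves the medial $F$ imbedded, so that planarity of $F$ really follows. Neither point is deep, but both are essential to make the argument rigorous.
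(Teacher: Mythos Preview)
Your proof is correct and follows essentially the same route as the paper: in both directions you combine Euler's formula with the circuit-nullity formula, and in the reverse direction you use the ribbon-graph construction of Section~7 with $P_1,P_2$ as the $\delta,\varepsilon$ partitions to build a closed surface of Euler characteristic $2c(F)$. Your explicit remark that the hypothesis $\tau_1(v)\neq\tau_2(v)$ is inherited from Theorem~\ref{Martin1} is a useful clarification that the paper leaves implicit.
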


The intent of this proposition is not to provide a new criterion for
planarity, but merely to indicate that $M_{\tau}(F)$ incorporates information
connected to familiar planarity criteria. For instance, the reader will
certainly not be surprised that Corollary \ref{Martin} indicates a connection
between planarity and matroid duality. It takes only a little longer to see
that Corollary \ref{Martin} is also connected to the following planarity
criterion, which is part of several solutions of the Gauss crossing problem
that have appeared in the literature \cite{DFOM, GR, RR}.

\begin{corollary}
\label{bipartite}Let $F$ be a 4-regular graph. Then $F$ is planar if and only
if it has an Euler system whose interlacement graph is bipartite.
\end{corollary}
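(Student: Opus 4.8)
The plan is to deduce the corollary from Proposition~\ref{Martin3}, which identifies planarity of $F$ with the existence of circuit partitions $P_1,P_2$ satisfying the equivalent conditions of Corollary~\ref{Martin}. The translation between such pairs and bipartite interlacement graphs runs through the matrix $M(C)=(I\mid\mathcal{A}\mid I+\mathcal{A})$ of Definition~\ref{matrix}, with $\mathcal{A}=\mathcal{A}(\mathcal{I}(C))$: a $2$-colouring $V(F)=X\sqcup Y$ of $\mathcal{I}(C)$ is exactly the assertion that $\mathcal{A}$ is block anti-diagonal for this splitting, and such a splitting becomes visible when one compares an Euler system $C$ against a dual pair of circuit partitions living on its $\phi_C$ and $\chi_C$ transitions.

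For the forward direction, let $F$ be planar. Proposition~\ref{Martin3} supplies circuit partitions $P_1,P_2$ with $\tau_1(v)\neq\tau_2(v)$ for every $v$ and, by Corollary~\ref{Martin}, with $r(\tau(P_1))+r(\tau(P_2))=n$ and $M_\tau(F)$ restricting to $M_\tau(P_1)\oplus M_\tau(P_2)$ on $\tau(P_1)\cup\tau(P_2)$. As in the proof of Proposition~\ref{tranrib}, Kotzig's theorem provides an Euler system $C$ using only $P_1$- and $P_2$-transitions, so $\phi_C(v)\in\{\tau_1(v),\tau_2(v)\}$ for each $v$; put $X=\{v:\phi_C(v)=\tau_1(v)\}$ and $Y=V(F)\setminus X$. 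Since $C$ is an Euler system, $\tau(C)$ is a basis of $M_\tau(F)$ (Theorem~\ref{tranmat}) contained in $\tau(P_1)\cup\tau(P_2)$, hence a basis of the rank-$n$ matroid $M_\tau(P_1)\oplus M_\tau(P_2)$, so $\tau(C)\cap\tau(P_i)$ is a basis of $M_\tau(P_i)$ for $i=1,2$. In $M(C)$ the columns of $M_\tau(P_1)$ indexed by $X$ are the unit vectors $e_v$, and these span the column space of $M_\tau(P_1)$, so every remaining column is supported on $X$. For $v\in Y$ that column is $\mathcal{A}_{\cdot,v}$ if $\tau_1(v)=\chi_C(v)$ and $\mathcal{A}_{\cdot,v}+e_v$ if $\tau_1(v)=\psi_C(v)$; the latter has a $1$ in row $v\notin X$, which is impossible, so $\tau_1(v)=\chi_C(v)$ and $\mathcal{A}_{\cdot,v}$ is supported on $X$. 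Thus $\mathcal{A}_{u,v}=0$ whenever $u,v\in Y$, and the symmetric argument using $\tau(C)\cap\tau(P_2)$ and $Y$ gives $\mathcal{A}_{u,v}=0$ whenever $u,v\in X$. Hence $\mathcal{I}(C)$ is bipartite with parts $X$ and $Y$.

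Conversely, let $C$ be an Euler system of $F$ with $\mathcal{I}(C)$ bipartite, and fix a $2$-colouring $V(F)=X\sqcup Y$, so that $\mathcal{A}=\mathcal{A}(\mathcal{I}(C))$ is block anti-diagonal. Let $P_1$ use $\phi_C$ on $X$ and $\chi_C$ on $Y$, and let $P_2$ use $\chi_C$ on $X$ and $\phi_C$ on $Y$; then $\tau_1(v)\neq\tau_2(v)$ for every $v$, so Corollary~\ref{Martin} applies to $P_1,P_2$. The matrix of $M_\tau(P_1)$ inside $M(C)$ consists of the unit vectors $e_v$ $(v\in X)$ together with the columns of $\mathcal{A}$ indexed by $Y$, which by the block structure are supported on $X$; hence $r(\tau(P_1))=|X|$, and symmetrically $r(\tau(P_2))=|Y|$. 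Therefore $r(\tau(P_1))+r(\tau(P_2))=n$, which is condition~$1$ of Corollary~\ref{Martin}, so $P_1$ and $P_2$ satisfy all of its equivalent conditions and Proposition~\ref{Martin3} shows that $F$ is planar.

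I expect the dictionary of the first paragraph to be the substantive point; once it is in place the computations are short. The one delicate step in the forward direction is the forced choice of $\chi_C$ over $\psi_C$ at the vertices of $Y$, where the vanishing diagonal of $\mathcal{A}$ is exactly what excludes $\psi_C$; the only external ingredient is Kotzig's theorem, invoked as in the proof of Proposition~\ref{tranrib} to produce an Euler system adapted to $P_1$ and $P_2$.
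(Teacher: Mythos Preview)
Your proof is correct and follows essentially the same route as the paper's: both directions go through Proposition~\ref{Martin3} and the block structure of $M(C)$, and the converse is identical. The only noteworthy difference is in the forward direction's construction of the Euler system $C$: the paper picks a basis $B_1$ of $M_\tau(P_1)$, uses Corollary~\ref{Martin}(3) to conclude that $B_2=\{\tau_2(v):\tau_1(v)\notin B_1\}$ is a basis of $M_\tau(P_2)$, and then takes $C$ with $\tau(C)=B_1\cup B_2$; you instead invoke Kotzig's observation (as in the proof of Proposition~\ref{tranrib}) to obtain $C$ directly and then recover the basis decomposition from Corollary~\ref{Martin}(4). Both yield the same Euler system and the same matrix analysis, so this is a variation in packaging rather than in substance.
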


\begin{proof}
Suppose $F$ is planar, and let $P_{1}$ and $P_{2}$ be circuit partitions of
$F$ that satisfy the equivalent conditions of Corollary \ref{Martin}. For each
$v\in V(F)$, let $\tau_{1}(v)$ and $\tau_{2}(v)$ be the transitions involved
in $P_{1}$ and $P_{2}$, respectively. Let $B_{1}$ be a basis of $M_{\tau
}(P_{1})$, let $V_{1}=\{v\in V(F)\mid\tau_{1}(v)\in B_{1}\}$, and let
$V_{2}=V(F)-V_{1}$. According to condition 3 of Corollary \ref{Martin},
$B_{2}=\{\tau_{2}(v)\mid v\in V_{2}\}$ is a basis of $M_{\tau}(P_{2})$.
Condition 4 of Corollary \ref{Martin} then tells us that $B_{1}\cup B_{2}$ is
a basis of $M_{\tau}(F)$.

Let $C$ be the Euler system of $F$ with $\tau(C)=B_{1}\cup B_{2}$. Let
\[
\mathcal{A}(\mathcal{I}(C))=%
\begin{pmatrix}
A_{11} & A_{12}\\
A_{21} & A_{22}%
\end{pmatrix}
\text{,}%
\]
where the rows of $A_{ij}$ correspond to vertices from $V_{i}$ and the columns
of $A_{ij}$ correspond to vertices from $V_{j}$. By Definition \ref{matroid}
\[
r(M_{\tau}(P_{1}))=r%
\begin{pmatrix}
I_{1} & A_{12}\\
\mathbf{0} & A_{22}^{\prime}%
\end{pmatrix}
,
\]
where $I_{1}$ is a $\left\vert V_{1}\right\vert \times\left\vert
V_{1}\right\vert $ identity matrix and $A_{22}^{\prime}$ is obtained from
$A_{22}$ by placing a 1 at each diagonal entry corresponding to a vertex $v\in
V_{2}$ where $\tau_{1}(v)=\psi_{C}(v)$. As $r(M_{\tau}(P_{1}))=\left\vert
B_{1}\right\vert =\left\vert V_{1}\right\vert =r(I_{1})$, $A_{22}^{\prime}$
cannot have any nonzero entry. It follows that no two vertices of $V_{2}$ are
neighbors in $\mathcal{I}(C)$. Similarly,
\[
r(M_{\tau}(P_{2}))=r%
\begin{pmatrix}
A_{11}^{\prime} & \mathbf{0}\\
A_{21} & I_{2}%
\end{pmatrix}
,
\]
where $I_{2}$ is a $\left\vert V_{2}\right\vert \times\left\vert
V_{2}\right\vert $ identity matrix and $A_{11}^{\prime}$ agrees with $A_{11}$
off the diagonal. As $r(M_{\tau}(P_{2}))=\left\vert V_{2}\right\vert $,
$A_{11}^{\prime}$ cannot have any nonzero entry. It follows that no two
vertices of $V_{1}$ are neighbors in $\mathcal{I}(C)$, so $\mathcal{I}(C)$ is bipartite.

Suppose conversely that $F$ has an Euler system $C$ whose interlacement graph
is bipartite. Let $V(F)=V_{1}\cup V_{2}$, so that $V_{1}\cap V_{2}%
=\varnothing$ and every edge of $\mathcal{I}(C)$ connects a vertex from
$V_{1}$ to a vertex from $V_{2}$. Then
\[
\mathcal{A}(\mathcal{I}(C))=%
\begin{pmatrix}
\mathbf{0} & A_{12}\\
A_{21} & \mathbf{0}%
\end{pmatrix}
\text{,}%
\]
where $A_{12}$ and $A_{21}$ are transposes. For $i\in\{1,2\}$ let $P_{i}$ be
the circuit partition of $F$ with
\[
\tau(P_{i})=\{\phi_{C}(v)\mid v\in V_{i}\}\cup\{\chi_{C}(v)\mid v\notin
V_{i}\}\text{.}%
\]
Then Definition \ref{matroid} implies that
\[
r(M_{\tau}(P_{1}))=r%
\begin{pmatrix}
I_{1} & A_{12}\\
\mathbf{0} & \mathbf{0}%
\end{pmatrix}
\text{ and }r(M_{\tau}(P_{2}))=r%
\begin{pmatrix}
\mathbf{0} & \mathbf{0}\\
A_{21} & I_{2}%
\end{pmatrix}
\text{,}%
\]
where $I_{1}$ and $I_{2}$ are identity matrices. It follows that $r(M_{\tau
}(P_{1}))+r(M_{\tau}(P_{2}))=n$, so $P_{1}$ and $P_{2}$ satisfy the equivalent
conditions of Corollary \ref{Martin}.
\end{proof}

\section{Conclusion}

The transition matroids of 4-regular graphs provide unified descriptions of
several objects of graph theory and knot theory. One virtue of this unified
context is to allow the application to these objects of matroid techniques,
which have been well studied. Algorithmic and combinatorial properties of
parametrized Tutte polynomials of matroids \cite{BR, EMT, T, Told, Za} provide
activities descriptions, complexity results and substitution techniques that
apply directly to all the different polynomials mentioned in the introduction.
Much remains to be done, though, to provide details of these applications, and
to relate them to results already in the literature. For example, the
circuit-nullity formula tells us that some bases of transition matroids
correspond to Euler systems, so it seems natural to guess that matroidal basis
activities can be used to explain the significance for the
Bollob\'{a}s-Riordan polynomial of the quasi-trees introduced by Champanerkar,
Kofman and Stoltzfus \cite{CKT}. Similarly, it seems natural to guess that
formulas for parametrized Tutte polynomials of series/parallel extensions of
matroids can be used to explain the tangle substitution formula of Jin and
Zhang \cite{JZ} for the homflypt polynomial. Guessing that there are such
explanations is not the same as actually providing them, though.

\section{Dedication}

This paper is dedicated to the memory of Michel Las Vergnas, in appreciation
of the beauty and importance of his many contributions to the theory of graphs
and matroids. In particular, his papers on circuit partitions and embedded
graphs from the 1970s and 1980s \cite{L2, L1, L} have proven to be crucial to
the development of the theory of 4-regular graphs, and they inspire the
results presented here.

\bigskip

\end{document}